\documentclass[12pt]{amsart}

\usepackage{amsmath, amsthm, amssymb, amsfonts, mathrsfs, amscd, tikz, bbm}
\usepackage{enumerate, paralist, hyperref, color, ulem}
\usepackage{dsfont}
\usepackage[all,cmtip]{xy}
\usepackage{ulem}

\def\AA{{\mathbb A}}

\def\CC{{\mathbb C}}

\def\NN{{\mathbb N}}

\def\PP{{\mathbb P}}

\def\RR{{\mathbb R}}

\def\ZZ{{\mathbb Z}}

\def\0{{\mathbf 0}}
\def\1{{\mathbf 1}}

\def\SL{\mathrm{SL}}

\def\Rat{\mathrm{Rat}}

\def\PGL{\mathrm{PGL}}

\def\GL{\mathrm{GL}}

\def\ord{\mathrm{ord}}

\def\Res{\mathrm{Res}}

\def\min{\mathrm{min}}
\def\uf{\mathrm{uf}}
\def\uc{\mathrm{uc}}

\def\Ratfd2{\mathrm{Rat}_{d,2}^{\uf}}
\def\Mfd2{\mathcal{M}_{d,2}^{\uf}}
\def\Rfd2{\mathcal{R}_{d,2}^{\uf}}
\def\Rcd2{\mathcal{R}_{d,2}^{\uc}}

\def\Md{\mathcal{M}_d}

\def\Ratd{\mathrm{Rat_d}}

\DeclareMathOperator{\ordRes}{ordRes}
\DeclareMathOperator{\MinResLoc}{MinResLoc}
\DeclareMathOperator{\Bary}{Bary}
\newcommand{\pberk}{\mathbf{P}^1_K}
\newcommand{\hberk}{\mathbf{H}^1_K}
\newcommand{\aberk}{\mathbf{A}^1_K}
\newcommand{\zetaG}{\zeta_G}

\newcommand{\vv}{\vec{v}}

\theoremstyle{plain}

\newtheorem{thm}{Theorem}
\newtheorem*{thm*}{Theorem}

\newtheorem{cor}[thm]{Corollary}
\newtheorem*{cor*}{Corollary}
\newtheorem{prop}[thm]{Proposition}
\newtheorem{lem}[thm]{Lemma}
\newtheorem*{prop*}{Proposition}
\newtheorem*{remark*}{Remark}

\theoremstyle{definition}

\newtheorem{notation}{Notation}

\title{Iteration and the Minimal resultant}

\author{Kenneth Jacobs}
\author{Phillip Williams}

\keywords{Arithmetic dynamics, Berkovich space, non-archimedean dynamics, minimal resultant, semi-stability}

\subjclass[2010]{Primary 11S82; Secondary  37P50}

\begin{document}

\begin{abstract}
Let $K$ be an algebraically closed field that is complete with respect to a non-Archimedean absolute value, and  let $\varphi\in K(z)$ have degree $d\geq 2$. We characterize maps for which the minimal resultant of an iterate $\varphi^n$ is given by a simple formula in terms of $d$, $n$, and the minimal resultant of $\varphi$. We show that such maps are precisely those with reduction outside of an indeterminacy locus $I(d)$ and which also have semi-stable reduction for every iterate $\varphi^n$. We give two equivalent ways of describing such maps, one measure theoretic and the other in terms of the moduli space $\mathcal{M}_d$ of degree $d$ rational maps.

As an application, we are able to give an explicit formula for the minimal value of the diagonal Arakelov-Green's function of a map satisfying the conditions of the main theorem. We illustrate our results with some explicit calculations in the case of the Latt\`es maps.
\end{abstract}

\maketitle

\section*{Introduction}

Let $K$ be a complete, algebraically closed non-Archimedean valued field. We will denote the ring of integers by $\mathcal{O}$, with maximal ideal $\mathfrak{m}$. The residue field will be written $k=\mathcal{O}/\mathfrak{m}$. If $\textrm{char}(k) = 0$ let $q_v=e$ be the base of the natural logarithm; otherwise let $q_v$ be the residue characteristic. We normalize the absolute value on $K$ so that $-\log_v |x| = \ord_{\mathfrak{m}}(x)$, where $\log_v = \log_{q_v}$. 

Let $\varphi\in K(z)$ have degree $d\geq 2$. A homogeneous lift of $\varphi$ is a pair of coprime homogeneous polynomials $\Phi=  [F,G]$, say \begin{align*}
F(X,Y) & = a_d X^d+ ... + a_0 Y^d\\
G(X,Y) & = b_d X^d + ... + b_0 Y^d\ ,
\end{align*} with the property that $\varphi(z) = \frac{F(z,1)}{G(z,1)}$. A lift $[F,G]$ is said to be normalized if $\max(|a_i|, |b_i|) = 1$.  We will often identify the map $\varphi$ with a point in $\PP^{2d+1}$ via the identification $\varphi \mapsto [a_d : ... :a_0:b_d:...:b_0] =:[a: b]$, which is clearly independent of the choice of lift.

The resultant $\Res(F,G)$ of a lift of $\varphi$ is a homogeneous polynomial in the coefficients of $F, G$ of degree $2d$, which we can also regard as a function of $\PP^{2d+1}$ using the identification above. We will write $R_\varphi$ for the ord value of the resultant of a normalized lift of $\varphi$. The {\it minimal resultant} is a conjugacy invariant of $\varphi$ given 
\[
R_{[\varphi]} := \min_{\gamma\in \PGL_2(K)}\ R_{\varphi^\gamma}\ (\geq 0)\ ,
\] where $\varphi^\gamma = \gamma^{-1}\circ\varphi \circ \gamma$ is the usual conjugacy action. We say that $\varphi$ has good reduction if $R_{\varphi} = 0$, and that $\varphi$ has potential good reduction if $R_{[\varphi]} = 0$.

The minimal resultant has appeared in the work of several other authors. Silverman \cite{silverman:ads} gives an overview of the minimal resultant and asked questions about the existence of a global minimal model and about Northcott-type properties related to the minmal resultant. These questions were subsequently explored in work Rumely \cite{Ru1} and of Stout and Townsley \cite{StoutTownsley}. Szpiro, Tepper, and the second author \cite{STW} have explored the connections between the minimality of the resultant and semistability in the sense of GIT, as has Rumely \cite{Rumely}. The first author has explored how the conjugates attaining the minimal resultant vary for higher iterates of the map \cite{KJThesis}.

In this paper, we are interested in understanding how the minimal resultant of an iterate $\varphi^n$ map relates to the minimal resultant of the original map. The resultant form itself behaves nicely under iteration: it is a power (that is a simple formula in terms of $n$ and $d$) of the resultant of the original map (see Lemma \ref{resultant} below). Two things, however, get in the way of the {\it minimal} resultant from behaving so nicely. The first is the normalization that may have to take place in order to insure that not all coefficients vanish under reduction: even if the coefficients for a lift of $\varphi$ are normalized, the coefficients obtained by iteration need not be. The second is the potential change of coordinates that takes place to give the minimal valuation for the resultant, which need not be the same for every iterate.

We will draw on two tools for resolving these issues. The first is a notion of indeterminacy introduced by DeMarco in \cite{DeMarco1, DeMarco2}; the indeterminacy locus $I(d)\subseteq \PP^{2d+1}$ is the locus where the rational map $\Gamma_n :\PP^{2d+1} \dashrightarrow \PP^{2d^n+1}$ induced by iterating $\varphi$ is undefined. 

The second tool is geometric invariant theory, and particularly the connections between semistability, the minimality of the resultant, and the indeterminacy locus. Connections between semistability and the resultant were first explored by Szpiro, Tepper and the second author in \cite{STW}, and later by Rumely \cite{Rumely}, while DeMarco explored the connections between semistability and $I(d)$ in \cite{DeMarco1, DeMarco2}.

These tools will be applied in particular to the reduction of $\varphi$: given a normalized lift $[F,G]$ of $\varphi$, corresponding to a point $[a: b] \in \PP^{2d+1}$, let $[\tilde{a}: \tilde{b}]\in \PP^1(k)$ define the coordinates of a rational map $\varphi_v$ on $\PP^1(k)$; we emphasize that $\varphi_v$ may not be a morphism, as $[\tilde{a}: \tilde{b}]$ may give rise to polynomials that share a common factor.

Our main result is

\begin{thm*}

The minimal resultant iteration formula 
\begin{equation}\label{eq:minresitform} R_{[\varphi^n]} = \frac{d^n(d^n-1)}{d(d-1)} \cdot R_{[\varphi]}
\end{equation} holds if and only if in any coordinate system where $\varphi$ has semistable reduction, we have that $\varphi_v \not\in I(d)$ and $\varphi^n$ has semistable reduction for all $n$. 

\end{thm*}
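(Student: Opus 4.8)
The plan is to reduce the whole statement to one bookkeeping identity comparing $R_{[\varphi^n]}$ with $R_{[\varphi]}$ after fixing a system of semistable-reduction coordinates. Such coordinates always exist, since the infimum defining $R_{[\varphi]}$ is attained, and by \cite{STW}, \cite{Rumely} a coordinate system exhibits semistable reduction for $\varphi$ precisely when it minimizes the resultant. So I would fix such coordinates, write $\psi$ for $\varphi$ in them (so $R_\psi = R_{[\varphi]}$), let $\Psi = [F,G]$ be its normalized lift, and let $\Psi^n = [F_n,G_n]$ be the literal $n$-fold composite. This $\Psi^n$ is a lift of $\psi^n$ but need not be normalized; let $m_n \ge 0$ be the least $\ord$-value among the coefficients of $F_n, G_n$, i.e.\ the scalar divided out to normalize it. Because $\Res$ of two degree-$D$ forms is homogeneous of total degree $2D$ in their coefficients, normalizing changes $\ord\Res$ by $-2d^n m_n$; combined with the resultant iteration formula (Lemma~\ref{resultant}) this yields the key identity
\[
R_{\psi^n} \;=\; \frac{d^n(d^n-1)}{d(d-1)}\,R_{[\varphi]} \;-\; 2 d^n m_n .
\]
Since $[\psi^n]=[\varphi^n]$ and the minimal resultant is an infimum over conjugates, $R_{[\varphi^n]} \le R_{\psi^n}$, with equality exactly when $\psi^n$ attains its minimal resultant, i.e.\ when $\psi^n$ has semistable reduction. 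So in all cases
\[
R_{[\varphi^n]} \;\le\; \frac{d^n(d^n-1)}{d(d-1)}\,R_{[\varphi]} - 2 d^n m_n \;\le\; \frac{d^n(d^n-1)}{d(d-1)}\,R_{[\varphi]} .
\]

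Next I would identify $m_n$ with membership in DeMarco's indeterminacy locus. The coordinates of the iteration map $\Gamma_n$ are universal polynomials with integer coefficients, so reduction modulo $\mathfrak{m}$ commutes with composition; hence $\Gamma_n$ is defined at the point $\psi_v \in \PP^{2d+1}(k)$ exactly when not every coefficient of $\Psi^n$ reduces to $0$, that is, exactly when $m_n = 0$. Therefore $\psi_v \notin I(d)$ if and only if $m_n = 0$ for every $n$. I would be careful here that $I(d)$ is really the union of the indeterminacy loci of all the $\Gamma_n$ — these form an increasing chain, and $\Gamma_2$ can be defined where $\Gamma_3$ is not (for instance when the reduced second iterate is a constant lying in the indeterminacy set of the reduced map) — so only ``$m_n = 0$ for all $n$'' detects the complement of $I(d)$.

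Given these two ingredients the theorem should fall out by comparing the displays. For the ``if'' direction I would pick any semistable-reduction coordinates; the hypothesis gives $\psi_v \notin I(d)$, so all $m_n$ vanish and the identity collapses to $R_{\psi^n} = \frac{d^n(d^n-1)}{d(d-1)} R_{[\varphi]}$, while the hypothesis also gives that $\psi^n$ has semistable reduction, so $R_{[\varphi^n]} = R_{[\psi^n]} = R_{\psi^n}$, which is \eqref{eq:minresitform}. For the ``only if'' direction I would assume \eqref{eq:minresitform} for all $n$, take an \emph{arbitrary} semistable-reduction coordinate system, and substitute into the chain of inequalities: it forces $2 d^n m_n \le 0$, hence $m_n = 0$ for all $n$ (always $m_n \ge 0$), hence $\psi_v \notin I(d)$; equality then holds throughout, so $R_{[\psi^n]} = R_{\psi^n}$, i.e.\ $\psi^n$ has semistable reduction, for every $n$. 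As a byproduct this direction shows that the right-hand conditions, although not conjugacy-invariant on their own, automatically propagate to every semistable-reduction coordinate system once they hold in one, which is what makes the theorem's ``in any coordinate system'' legitimate.

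I expect the main obstacle is not this logical skeleton but pinning down its inputs in exactly the form needed: (i) the precise statement of Lemma~\ref{resultant} and the homogeneity degree $2d^n$ controlling the effect of normalization on $\ord\Res$; (ii) the equivalence ``semistable reduction $\iff$ present coordinates minimize the resultant'', which I would take from \cite{STW} and \cite{Rumely}; and (iii) the description of $I(d)$ from \cite{DeMarco1} as the locus detected by vanishing of the reduced coefficients of an iterated lift, with the union-over-$n$ subtlety noted above, together with the convention that ``$\varphi^n$ has semistable reduction'' is read in the fixed coordinate system rather than only up to conjugacy.
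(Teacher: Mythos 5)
Your argument is correct and is essentially the paper's own: your key identity $R_{\psi^n}=\frac{d^n(d^n-1)}{d(d-1)}R_{[\varphi]}-2d^nm_n$ is exactly the combination of Lemma~\ref{resultant} with the normalized-resultant formula from \cite{Ru1} that the paper's proof uses, and the remaining inputs (semistable reduction $\Leftrightarrow$ resultant-minimizing coordinates via \cite{STW} and \cite{Rumely}, plus DeMarco's identification of $I(d)$ with the common vanishing of the reduced iterate coefficients) are the same, your route merely reading $m_n=0$ directly off the identity instead of passing through the invariant-function computation in Proposition~\ref{minimaliterate}. One small correction: your aside that the indeterminacy loci of the $\Gamma_n$ form a strictly increasing chain in $n$ is false --- by DeMarco's Lemma~7 (quoted in Section~\ref{indeterminacy}) the locus is the same set $I(d)$ for every $n\geq 2$, and your proposed example cannot occur, since if $\widetilde{\varphi}$ is constant with value $c$ not a hole, then the residue map of $\varphi^2$ is again the constant $c$ and its cancelled factor is, up to a nonzero scalar, a power of the same $\tilde{A}$, so no new indeterminacy arises at higher iterates; fortunately you only ever invoke the equivalence ``$\varphi_v\notin I(d)$ iff $m_n=0$ for all $n$,'' so this slip does not affect the validity of your proof.
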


In Section~\ref{examples}, we give examples of maps which satisfy (\ref{eq:minresitform}), as well as maps which fail to satisfy (\ref{eq:minresitform}). It is interesting to consider, then, what sorts of maps satisfy the equivalent conditions of this theorem. To explore this question, we employ both analytic and algebro-geometric ideas. 

Working over the Berkovich projective line, we are able to give two geometric conditions equivalent to (\ref{eq:minresitform}); the first condition is a stability property for Rumely's minimal resultant loci (\cite{Ru1, Rumely}), while the second condition asserts that (\ref{eq:minresitform}) holds if and only if no point in the barycenter of the equilibrium measure $\mu_\varphi$ corresponds to a conjugate with reduction in $I(d)$. The minimal resultant locus, the equilibrium measure, and the barycenter will be defined in Section~\ref{Berk}. Our proof of these conditions relies on the construction of two `residue' measures attached to $\varphi$; these measures first appeared (separately) in \cite{DeMarco1} and \cite{DF}. 

As an application of these equivalent conditions, we are able to compute the minimal value of the diagonal Arakelov-Green's function $g_\varphi(x,x)$ (defined in Section~\ref{mainresult}) for maps $\varphi$ satisfying the hypotheses of the theorem; in particular, we obtain
\begin{cor*}

If $\varphi$ satisfies either of the equivalent conditions in the preceeding Theorem, then $$\min_{x\in \pberk} g_\varphi(x,x) = \frac{1}{d(d-1)}R_{[\varphi]}\ .$$

In particular, in this case we find $\min_{x\in \pberk} g_\varphi(x,x) >0$ if and only if $\varphi$ does not have good reduction. 
\end{cor*}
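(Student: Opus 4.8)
The plan is to exploit the relationship between the diagonal Arakelov--Green's function $g_\varphi(x,x)$ and the minimal resultant, localized at the point of $\pberk$ where the resultant locus is minimized. The key input is the expected formula (proved elsewhere in the paper, presumably in Section~\ref{Berk} or \ref{mainresult}) relating the value of $g_\varphi$ along the diagonal to $\ordRes$ data: for each $\zeta\in\hberk$, one has an identity of the shape $g_\varphi(\zeta,\zeta) = \tfrac{1}{d(d-1)}\big(R_\varphi^{(\zeta)} + (\text{nonnegative correction term})\big)$, where $R_\varphi^{(\zeta)}$ is the ord of the resultant of a lift normalized with respect to the coordinate centered at $\zeta$, and the correction term vanishes exactly when $\zeta$ lies in the minimal resultant locus $\MinResLoc(\varphi)$. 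Granting such a formula, the infimum over $x\in\pberk$ of $g_\varphi(x,x)$ is attained precisely on $\MinResLoc(\varphi)$, at which points it equals $\tfrac{1}{d(d-1)}R_{[\varphi]}$ by definition of the minimal resultant.

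First I would recall the definition of $\MinResLoc(\varphi)$ and the fact (due to Rumely) that it is nonempty, so that the infimum in question is actually a minimum; combined with the hypothesis of the theorem, which guarantees that the minimal resultant behaves well under iteration, one pins down the barycenter of $\mu_\varphi$ as lying in $\MinResLoc(\varphi)$ and confirms that conjugates realizing the minimal resultant have reduction outside $I(d)$. Second, I would invoke the local formula for $g_\varphi(\zeta,\zeta)$ and substitute a point $\zeta_0\in\MinResLoc(\varphi)$: the correction term drops out, leaving $g_\varphi(\zeta_0,\zeta_0) = \tfrac{1}{d(d-1)}R_{[\varphi]}$. Third, to see this is the minimum and not merely one value, I would note that the correction term is nonnegative for every $\zeta$, so $g_\varphi(\zeta,\zeta)\geq \tfrac{1}{d(d-1)}R_{[\varphi]}$ everywhere, with equality on $\MinResLoc(\varphi)$. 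Finally, for the last sentence of the corollary, I would observe that $R_{[\varphi]} = 0$ is by definition the statement that $\varphi$ has potential good reduction, and that $\varphi$ has good reduction in the usual sense precisely when $R_{[\varphi]}=0$; hence $\min_x g_\varphi(x,x) > 0$ if and only if $R_{[\varphi]}>0$, i.e.\ $\varphi$ does not have potential good reduction.

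The main obstacle I anticipate is establishing—or correctly citing—the precise local identity relating $g_\varphi(\zeta,\zeta)$ to the resultant with an explicitly nonnegative correction term that vanishes exactly on $\MinResLoc(\varphi)$; this is where the two ``residue'' measures attached to $\varphi$ (from \cite{DeMarco1} and \cite{DF}) and the Berkovich-space machinery developed earlier in the paper must do the real work. In particular one must check that the hypotheses of the theorem are exactly what is needed to guarantee the correction term is controlled along the relevant iterates, so that passing to $\varphi^n$ and using the iteration formula \eqref{eq:minresitform} is consistent with the direct computation at level one. Once that local formula is in hand, the remainder of the argument is the short bookkeeping sketched above.
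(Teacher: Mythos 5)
Your plan hinges on a level-one local identity of the form $g_\varphi(\zeta,\zeta)=\tfrac{1}{d(d-1)}\bigl(R_\varphi^{(\zeta)}+\text{(nonnegative correction)}\bigr)$ with the correction vanishing exactly on $\MinResLoc(\varphi)$, and this is where the argument breaks: no such identity can hold, because it would yield $\min_{x\in\pberk} g_\varphi(x,x)=\tfrac{1}{d(d-1)}R_{[\varphi]}$ \emph{unconditionally}, with no reference to the hypotheses of the main theorem. The paper's own Latt\`es computations rule this out: for $\psi_m$ on a Tate curve with $m$ even one has $\min_{x} g_{\psi_m}(x,x)=-\tfrac{1}{24}\log|q|$, while $\tfrac{1}{d(d-1)}R_{[\psi_m]}$ computed from Proposition~\ref{prop:Latteseg} is strictly larger (e.g.\ $-\tfrac{1}{18}\log|q|$ for $m=2$). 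The true relation between $g_\varphi(x,x)$ and the resultant is intrinsically asymptotic in the iterates: the paper invokes \cite{KJThesis} Corollary 4.8, which says $\min_{x\in\pberk} g_\varphi(x,x)=\lim_{n\to\infty}\min_{x\in\pberk}\tfrac{1}{d^{2n}-d^n}\ordRes_{\varphi^n}(x)$, and then uses the main theorem's conclusion that in the semistable coordinate system every iterate $\varphi^n$ also has semistable reduction, so $\min_x \ordRes_{\varphi^n}(x)=R_{[\varphi^n]}=N\cdot R_{[\varphi]}$ and the normalized limit is the constant $\tfrac{1}{d(d-1)}R_{[\varphi]}$. You gesture at this at the end ("controlled along the relevant iterates"), but as written the heart of your proof is a nonexistent pointwise formula rather than the iterate-limit theorem, so the gap is the missing input that actually forces the hypothesis to be used: the identification of $\min_x g_\varphi(x,x)$ with the limit of normalized minimal resultants of $\varphi^n$.

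A secondary, smaller point: in the final step you slide between "good reduction" and "potential good reduction." Since $R_{[\varphi]}$ is a conjugacy invariant, $R_{[\varphi]}=0$ is literally potential good reduction; the corollary's phrasing in terms of good reduction is harmless here (and the paper itself elides the distinction), but you should state which equivalence you are using rather than asserting both.
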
 The last part of the corollary is already known to follow from a more general result of Baker \cite{Baker}, where the equivalence $\min_{x\in \pberk} g_\varphi(x,x)>0$ and bad reduction is established unconditionally. In Section~\ref{examples} below, we carry out these computations explicitly for flexible Latt\`es maps. Following a suggestion of Matt Baker, we compare this to the minimal value of the two-variable Arakelov-Green's function on an elliptic curve using results in Baker-Petsche \cite{BakerPetsche}.

We are also able to re-cast the semi-stability assumption in the main Theorem in terms of open subsets of parameter space, and we pose the question of whether this subset is Zariski dense; see Section~\ref{sect:AG}. At present the authors do not know whether this holds even for small examples. 

The outline for this paper is as follows: In Section \ref{indeterminacy} we introduce the necessary background regarding parameter space and reduction of rational maps. In Section~\ref{sect:lemmas} we establish preliminary lemmas concerning the resultant, semistability, and the indeterminacy locus $I(d)$. We prove the main result in Section~\ref{mainresult}. Following this, we recall some background on the Berkovich projective line, and establish the first equivalent condition to our main Theorem. Section~\ref{sect:AG} contains another equivalent condition to the main Theorem, this time using algebro-geometric machinery. In Section~\ref{sect:potentialtheory} we prove the corollary stated above pertaining to the minimal value of the Arakelov-Green's function. Finally, in Section~\ref{examples} we give concrete examples of maps where the equivalent conditions hold, and other maps where the conditions fail to hold.

\subsection*{Acknowledgements} The authors would like to thank Laura DeMarco and Pete Clark for helpful correspondence, and also Matt Baker for suggesting the connections between our work and the Green's functions on elliptic curves.

\section{Notation and Background} \label{indeterminacy}

\subsection{Iteration on Parameter Space}
Over any base, morphisms of degree $d$ on $\PP^1$ are parameterized by the coefficients of two homogeneous polynomials of degree $d$ without common roots. This last condition is equivalent to the non-vanishing of the resultant of the two polynomials, and so the space of rational maps of degree $d$ is the complement of the resultant hypersurface, an open subscheme of a projective space: $\Ratd \subset \PP^{2d+1}$. Points in $\PP^{2d+1}$ that are not in $\Ratd$ correspond to pairs of homogeneous polynomials with common roots; canceling these common roots yields a ``degenerate" map $\widetilde{\varphi}$ of lower degree.

Iteration of a rational map defines a morphism $\Gamma_n: \Ratd \rightarrow \mathrm{Rat}_{d^n}$. This map extends to a rational map on the projective spaces:  $$\Gamma_n: \PP^{2d+1} \dashrightarrow \PP^{2d^n+1}.$$ In \cite{DeMarco1}, DeMarco showed that, for every $n$, this map is defined outside of a set $I(d)$ of co-dimension $d+1$, and described precisely what this locus looks like. Though working over $\CC$, DeMarco's gives a completely algebraic characterization of the indeterminacy locus (\cite{DeMarco1}, Lemma 6) that works over base $\ZZ$. Her characterization of $I(d)$ as a set (\cite{DeMarco1}, Lemma 7) then works over any infinite field.

\begin{prop}
The set on which $\Gamma_n: \PP^{2d+1} \dashrightarrow \PP^{2d^n+1}$ is undefined consists, for every $n$, of the maps such that $\widetilde{\varphi}$ is constant and one of the factors that cancels is the constant value of $\widetilde{\varphi}$. 
\end{prop}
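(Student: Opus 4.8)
The plan is to unwind the definition of the iteration map $\Gamma_n$ in homogeneous coordinates and identify exactly when the homogeneous formula for $\Phi^n = [F^{(n)}, G^{(n)}]$ degenerates to the zero map. Write $\Phi = [F,G]$ for a lift of $\varphi$, viewed as a point $[a:b] \in \PP^{2d+1}$, and let $\Phi^{(n)} = \Phi \circ \cdots \circ \Phi$ denote the coordinatewise $n$-fold composition of homogeneous polynomials, so that $\Gamma_n([a:b])$ is the point of $\PP^{2d^n+1}$ whose coordinates are those of $\Phi^{(n)}$. The map $\Gamma_n$ is undefined at $[a:b]$ precisely when every coefficient of both $F^{(n)}$ and $G^{(n)}$ vanishes, i.e.\ when $F^{(n)} \equiv G^{(n)} \equiv 0$ identically as polynomials. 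So the task is to characterize the locus $\{ \Phi^{(n)} \equiv 0 \}$, and the claim is that this locus is the same for all $n \geq 2$ and is described by the stated degeneracy condition.

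First I would dispose of the easy direction. Suppose the ``degenerate'' map $\widetilde{\varphi}$ is constant, say $F = H \cdot \widetilde F$ and $G = H \cdot \widetilde G$ where $H$ is the gcd and $[\widetilde F : \widetilde G]$ has degree zero, i.e.\ $[\widetilde F : \widetilde G] = [\alpha : \beta]$ is a constant point $c = [\alpha:\beta] \in \PP^1$; assume moreover that $H$ vanishes at $c$, i.e.\ $\beta \widetilde{F} - \alpha \widetilde{G}$ — wait, more precisely that $(\beta X - \alpha Y) \mid H$, equivalently $H(\alpha,\beta) = 0$. Then for any input, $\Phi$ maps (projectively) everything to $c$, and $\Phi(c) = [H(\alpha,\beta)\alpha : H(\alpha,\beta)\beta] = [0:0]$. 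Concretely, composing the homogeneous polynomials: $\Phi^{(2)} = \Phi \circ \Phi = [H(\Phi)\cdot \alpha,\ H(\Phi)\cdot\beta]$ up to scalar, and $H(\Phi) = H(\alpha H', \beta H')$ where $H' = H$ evaluated — I should track this carefully, but the point is that $H(F,G)$ is divisible by the form cutting out $c$ composed with $\Phi$, which evaluates to something proportional to $H(\alpha,\beta) = 0$. Hence $\Phi^{(2)} \equiv 0$, and inductively $\Phi^{(n)} \equiv 0$ for all $n \geq 2$, so $[a:b] \in I(d)$.

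For the converse — which I expect to be the main obstacle — I would show that if $\Phi^{(n)} \equiv 0$ for some $n$, then $\widetilde\varphi$ is constant and the cancelled factor hits that constant value. Here is the idea: suppose $\Phi^{(n)} \equiv 0$ but the hypothesis fails, and derive a contradiction by exhibiting an input on which $\Phi^{(n)}$ does not vanish. Factor $\Phi = H \cdot \widetilde\Phi$ with $\widetilde\Phi = [\widetilde F : \widetilde G]$ coprime of degree $e = \deg\widetilde\varphi$. If $e \geq 1$, then $\widetilde\varphi$ is a genuine degree-$e$ morphism, so $\widetilde\Phi^{(n)}$ is coprime of degree $e^n \geq 1$ hence not identically zero; one then argues (choosing a point where none of the intermediate $H$-factors vanish — possible since $k$, or $K$, is infinite and $H$ has finitely many roots, and $\widetilde\varphi$ is surjective on $\PP^1$ so we can avoid the bad fibers) that $\Phi^{(n)}$ evaluated there is nonzero. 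If $e = 0$, so $\widetilde\Phi = [\alpha:\beta] = c$ is constant, but $H(c) \neq 0$, then $\Phi$ sends every point where $H$ doesn't vanish to the well-defined point $[H(\alpha,\beta)\alpha : H(\alpha,\beta)\beta] = c \neq [0:0]$, and $c$ is a fixed point away from the vanishing of $H$, so iterating stays at the nonzero value $c$ and again $\Phi^{(n)} \not\equiv 0$. The delicate bookkeeping is the composition of homogeneous polynomials and tracking divisibility by the linear forms cutting out $c$; this is exactly the content of DeMarco's Lemma 6 and Lemma 7 in \cite{DeMarco1}, which (as noted in the excerpt) are purely algebraic and valid over any infinite field, so I would either cite those directly or reproduce the short composition computation. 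The only care needed beyond citing DeMarco is to note that her argument, written over $\CC$, uses only that the ground field is infinite, which holds here since $K$ is algebraically closed; the ``set-theoretic'' characterization is what we need, and it is insensitive to the particular $n \geq 2$.
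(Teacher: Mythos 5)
Your proposal is correct and matches the paper's approach: the paper proves this proposition simply by citing DeMarco's Lemma 7 (with Lemma 6 supplying the purely algebraic characterization), which is exactly the citation you fall back on. Your sketch is a faithful and sound reconstruction of that argument — the factorization $\Phi = H\cdot\widetilde{\Phi}$, the identity $H(\alpha H,\beta H)=H(\alpha,\beta)\,H^{d}$ in the degree-zero case, and, when $\deg\widetilde{\varphi}\geq 1$, the choice of a point whose forward orbit under $\widetilde{\varphi}$ avoids the finitely many zeros of $H$ (possible since the field is infinite).
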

\begin{proof}
See \cite{DeMarco1}, Lemma 7.
\end{proof}

Crucially, $I(d)$ as a set doesn't depend on $n$. Throughout this paper, we will primarily be concerned with whether or not a rational map defined over the residue field lies in $I(d)$; as such, we will most often view $I(d)\subseteq\PP^1(k)$.

\subsection{Reduction and the Resultant Divisor}

Let $\varphi: \PP^1(K) \rightarrow \PP^1(K)$ be a morphism of degree $d$. As above, $\varphi$ can be represented by a point $[a,b] = [a_d, ..., a_0, b_d, ..., b_0] \in \Rat_d(K) \subseteq \PP^{2d+1}(K)$ in projective space; we let $$F(X,Y)=a_d X^d + ... +a_0Y^d\ , \ G(X,Y) = b_d X^d + ... + b_0Y^d$$ be coprime, homogeneous polynomials of degree $d$ that represent $\varphi$. We say that the representation $F,G$ is normalized  if each coefficient has absolute value at most one, and at least one coefficient has absolute value 1. Any representative can be made into a normalized representative if we divide through by the coefficient with the largest absolute value; on the other hand, normalized representatives are not unique: scaling by any unit will preserve normalization.

\begin{notation}Given a normalized representative $F,G$ of $\varphi$, we define the \textit{reduction of $\varphi$} to be the rational map of $\PP^1(k)$ given $$\varphi_v:=[\tilde{F}, \tilde{G}]\ ,$$ where $\tilde{F}, \tilde{G}$ are the polynomials over $k$ obtained by reducing the coefficients of $F, G$. On the parameter space $\PP^{2d+1}(K)$, this corresponds to reducing coordinates modulo $\mathfrak{m}$; if $\varphi$ corresponds to the point $[a,b]\in \PP^{2d+1}(K)$, the point corresponding to the reduction map is denoted $[\tilde{a}, \tilde{b}]\in \PP^{2d+1}(k)$.
\end{notation}

\begin{notation}The reduction is said to be \textit{degenerate} if the polynomials $\tilde{F}, \tilde{G}$ have a common factor. In this case, we write $\tilde{A} =$gcd$(\tilde{F}, \tilde{G})$.  Let $\tilde{F} = \tilde{A} \cdot \tilde{F}_0$ and $\tilde{G} = \tilde{A}\cdot \tilde{G}_0$. The \textit{residue map} $\widetilde{\varphi}$ of $\varphi$ is the morphism of $\PP^1(k)$ given by $$\widetilde{\varphi}:= [\tilde{F}_0, \tilde{G}_0]\ .$$ If the polynomials $\tilde{F}, \tilde{G}$ do not have a common factor, the residue map is defined to be the morphism $[\tilde{F}, \tilde{G}]$ of $\PP^1(k)$; in this case, $\varphi$ has good reduction.
\end{notation}

\begin{notation} \label{resultantdivisor} Given a rational map $\varphi\in \Rat_d(K)$, let $R_{\varphi}$ denote the ord-value of the resultant of a normalized lift of $\varphi$. Likewise, let $R_{[\varphi]}$ denote the minimal resultant, which gives the minimal value of $R_{\varphi^\gamma}$ among all $\PGL_2(K)$-conjugates of $\varphi$.
\end{notation} 

\section{Preliminary Lemmas} \label{sect:lemmas}
\subsection{The Resultant Under Iteration}
Our ultimate goal is to understand when the minimal resultant transforms ``nicely" under iteration. Therefore this basic lemma about how the resultant transforms under iteration is essential to what follows. It appeared in the first author's thesis (\cite{KJThesis}, Lemma 3.4), albeit with different notation. Its proof is straightforward, so we have included it here.

Let $\rho_D \in O(2D)$ be the resultant form. Let $$N=\frac{d^n(d^n-1)}{d(d-1)}.$$

\begin{lem}
\label{resultant}
If $(a,b)$ are the $2d+2$ coefficients of two homogeneous polynomials of degree $d$, and $(a_n,b_n)$ are the $2d^n+2$ coefficients of the two homogeneous polynomials of degree $d^n$ obtained by iteration $n$ times, then $\rho_{d^n} (a_n, b_n) = \rho_{d}(a,b)^N$. 
\end{lem}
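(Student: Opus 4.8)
The plan is to deduce the lemma from a single composition identity for the resultant form (a ``chain rule''), and then to induct on $n$. Write $\Phi=[F,G]$ with $\deg F=\deg G=d$ and $\Psi=[P,Q]$ with $\deg P=\deg Q=e$, and let $P(F,G),\,Q(F,G)$ denote the binary forms of degree $de$ obtained by substituting $U\mapsto F,\ V\mapsto G$, so that $[\,P(F,G),\,Q(F,G)\,]$ is a homogeneous lift of $\psi\circ\varphi$. The substantial step is to prove
\[
\rho_{de}\big(P(F,G),\,Q(F,G)\big)\;=\;\rho_d(F,G)^{\,e^{2}}\cdot\rho_e(P,Q)^{\,d}.
\]
Both sides are polynomials with integer coefficients in the coefficients of $F,G,P,Q$, so it suffices to verify the identity when those coefficients are generic. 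Over an algebraic closure of the relevant rational function field I may then factor $P=a\prod_{k=1}^{e}(U-\pi_kV)$ and $Q=b\prod_{l=1}^{e}(U-\chi_lV)$, whence $P(F,G)=a\prod_k(F-\pi_kG)$ and $Q(F,G)=b\prod_l(F-\chi_lG)$, each $F-\pi_kG$ a binary form of degree $d$.

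Applying multiplicativity of the resultant in each argument and the scaling rule $\rho_D(cA,B)=\rho_D(A,cB)=c^{D}\rho_D(A,B)$ (both consequences of the classical product formula, which in the relevant equal-degree case reads $\rho_D(A,B)=A(1,0)^{D}\prod_{A(\xi,1)=0}B(\xi,1)$) I would then obtain
\[
\rho_{de}\big(P(F,G),\,Q(F,G)\big)=a^{de}\,b^{de}\prod_{k=1}^{e}\prod_{l=1}^{e}\rho_d\big(F-\pi_kG,\ F-\chi_lG\big).
\]
The only computation remaining is the two-parameter resultant $\rho_d(F-\pi G,\,F-\chi G)$ for scalars $\pi\neq\chi$. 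From the product formula one reads off that $\rho_d(A,B+cA)=\rho_d(A,B)$ and $\rho_d(A+cB,B)=\rho_d(A,B)$ for a scalar $c$ (the second after swapping the two arguments, where the two sign factors $(-1)^{d^{2}}$ cancel); subtracting $F-\pi G$ from the second slot turns it into $(\pi-\chi)G$, extracting the scalar costs $(\pi-\chi)^{d}$, and adding $\pi G$ back to the first slot restores $F$, so $\rho_d(F-\pi G,F-\chi G)=(\pi-\chi)^{d}\,\rho_d(F,G)$ exactly, with no stray sign. Substituting this and recognizing $a^{e}b^{e}\prod_{k,l}(\pi_k-\chi_l)=\rho_e(P,Q)$ proves the chain rule.

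Finally I would apply the chain rule to $\varphi^{n}=\varphi\circ\varphi^{n-1}$, that is, with inner form of degree $d^{n-1}$ and outer form $[F,G]$ of degree $d$, which yields
\[
\rho_{d^{n}}(a_n,b_n)=\rho_{d^{n-1}}(a_{n-1},b_{n-1})^{\,d^{2}}\cdot\rho_d(a,b)^{\,d^{n-1}},
\]
and then induct on $n$, the base case $n=1$ being trivial since $N=1$. If $\rho_{d^{n-1}}(a_{n-1},b_{n-1})=\rho_d(a,b)^{N_{n-1}}$ with $N_{n-1}=\frac{d^{n-1}(d^{n-1}-1)}{d(d-1)}$, the exponent above becomes $d^{2}N_{n-1}+d^{n-1}$, and the elementary identity $d^{2}N_{n-1}+d^{n-1}=\frac{d^{n-1}(d^{n}-1)}{d-1}=N$ closes the induction. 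I expect the only real friction to be bookkeeping the leading coefficients and signs in the resultant manipulations; routing everything through the product formula keeps this harmless, since adding a constant multiple to one argument introduces no sign and the argument-swap sign squares to $1$. The genuine content of the lemma is concentrated entirely in the chain-rule identity.
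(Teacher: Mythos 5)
Your proof is correct, and its skeleton is the same as the paper's: a composition identity for resultants of composed pairs, followed by induction on $n$ via the geometric-series exponent identity. What you do differently is prove the chain rule rather than cite it: the paper quotes an exercise from \cite{silverman:ads}, while you derive the identity by passing to generic coefficients, factoring the outer pair into linear forms over an algebraic closure, and using multiplicativity, the scaling rule, and the evaluation $\rho_d(F-\pi G,\,F-\chi G)=(\pi-\chi)^d\rho_d(F,G)$; all of these steps, including the recombination $a^{e}b^{e}\prod_{k,l}(\pi_k-\chi_l)=\rho_e(P,Q)$ and the final identity $d^2N_{n-1}+d^{n-1}=N$, check out. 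One point in your favor worth recording: your exponent placement, $\rho_{de}\big(P(F,G),Q(F,G)\big)=\rho_d(F,G)^{e^2}\rho_e(P,Q)^{d}$ with $[F,G]$ the inner pair, is the correct general statement (it passes the bidegree count in the coefficients and simple test cases such as $f=2X,\,g=Y$, $F=U^2,\,G=V^2$), whereas the formula as quoted in the paper's proof, $\rho_{n_1n_2}(R,S)=\rho_{n_1}(f,g)^{n_2}\rho_{n_2}(F,G)^{n_1^2}$ with $f,g$ inner of degree $n_1$, attaches the square to the wrong factor and already fails a degree count when $n_1\neq n_2$. This slip is harmless for the lemma itself: the displayed induction step $\rho_{d^{n+1}}(F_{n+1},G_{n+1})=\rho_d(F,G)^{d^n}\rho_{d^n}(F_n,G_n)^{d^2}$ is exactly the correct chain rule for the decomposition $\varphi\circ\varphi^n$, and since both factors are powers of the same $\rho_d(a,b)$ and $\varphi\circ\varphi^n=\varphi^n\circ\varphi$, the total exponent comes out to $N$ either way, just as in your version with $\varphi^{n}=\varphi\circ\varphi^{n-1}$.
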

\begin{proof}
This follows from an exercise in \cite{silverman:ads} that gives the resultant for a composition of pairs of homogeneous polynomials in two variables: let $f$, $g$ be of degree $n_1$ and $F$, $G$ of degree $n_2$. Then if $R = F(f,g)$ and $S = G(f, g)$, then $$\rho_{n_1n_2}(R, S) = \rho_{n_1}(f, g)^{n_2}\rho_{n_2}(F,G)^{n_1^2}$$
Now induct on $n$: the base case is trivial. Let $F,G$ be the homogenous polynomials corresponding to $(a,b)$ and let $F_n, G_n$ be the homogeneous polynomials of degree $d^n$ obtained by the $n$-th iteration. Then 
			\begin{align*}
			\rho_{d^{n+1}}(F_{n+1}, G_{n+1})&= \rho_{d}(F, G)^{d^n} \rho_{d^n}(F_n,G_n)^{d^2} \\
									   &= \rho_{d}(F,G)^{d^n + d^2(d^{n-1}(d^{n-1}+\dots + 1))} \\
									   &= \rho_{d}(F,G)^{d^n (d^{n+1}-1)/(d-1)}
			\end{align*}
\end{proof}

Now we move to understand when the resultant divisor of $\varphi$ transforms nicely under iteration, without any assumptions yet about minimality:
\begin{prop}
\label{minimaliterate}
The reduction $\varphi_v$ lies outside of $I(d)$ if and only if for every $n$ we have $R_{\varphi^n} = N\cdot R_{\varphi}$. 
\end{prop}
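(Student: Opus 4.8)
The plan is to directly compute the resultant of a normalized lift of $\varphi^n$ by tracking the normalization that takes place when we iterate. Fix a normalized lift $[F,G]$ of $\varphi$, corresponding to $[a:b]\in\PP^{2d+1}(K)$, and let $[F_n,G_n]$ be the (generally unnormalized) pair of degree-$d^n$ polynomials obtained by $n$-fold iteration. By Lemma \ref{resultant} we have $\rho_{d^n}(F_n,G_n)=\rho_d(F,G)^N$, so $\ord\,\rho_{d^n}(F_n,G_n)=N\cdot R_\varphi$. The point is that $R_{\varphi^n}$ is the ord of the resultant of the \emph{normalized} lift; if $c_n\in K$ is the coefficient of largest absolute value appearing in $(F_n,G_n)$, then the normalized lift is $(c_n^{-1}F_n,\,c_n^{-1}G_n)$, and since $\rho_{d^n}$ is homogeneous of degree $2d^n$ in the coefficients,
\begin{equation*}
R_{\varphi^n}=N\cdot R_\varphi - 2d^n\cdot\ord(c_n)=N\cdot R_\varphi + 2d^n\,\ell_n,
\end{equation*}
where $\ell_n:=-\ord(c_n)=\log_v\|(F_n,G_n)\|\ge 0$ is the max of $-\ord$ over the coefficients of $(F_n,G_n)$. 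Thus $R_{\varphi^n}=N\cdot R_\varphi$ for all $n$ if and only if $\ell_n=0$ for all $n$, i.e. if and only if the polynomials $(F_n,G_n)$ already have all coefficients of absolute value $\le 1$ for every $n$.

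The remaining, and main, task is to identify the condition ``$(F_n,G_n)$ has integral coefficients for all $n$'' with ``$\varphi_v\notin I(d)$''. One direction is a direct reduction argument: reducing the relation $(F_n,G_n)=(F_{n-1}(F,G),\,G_{n-1}(F,G))$ modulo $\mathfrak{m}$ shows that, when everything is integral, the reduction of $(F_n,G_n)$ is obtained by composing the reductions, so $(\varphi^n)_v=(\varphi_v)^{\circ n}$ in the naive sense of composing the pairs of polynomials over $k$. If $\varphi_v\notin I(d)$, write $\varphi_v=[\tilde A\tilde F_0,\tilde A\tilde G_0]$ with residue map $\widetilde\varphi=[\tilde F_0,\tilde G_0]$; by the Proposition quoted from \cite{DeMarco1}, Lemma 7, the condition $\varphi_v\notin I(d)$ means that either $\widetilde\varphi$ is nonconstant, or $\widetilde\varphi$ is constant but its value is not a root of $\tilde A$. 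In either case one checks by induction (this is essentially the content of DeMarco's Lemma 6) that the iterated composition never produces a common factor that forces a coefficient to blow up — concretely, $\Gamma_n$ is a morphism at $\varphi_v$, so the naive composition over $k$ does not vanish identically, which forces $\ell_n=0$. Conversely, if $\varphi_v\in I(d)$, then $\widetilde\varphi\equiv\alpha$ is constant and $\tilde A(\alpha)=0$; one shows $\Gamma_2$ (hence $\Gamma_n$ for $n\ge 2$) is genuinely indeterminate at $\varphi_v$, which means the naive $k$-composition of $(F,G)$ with itself vanishes identically, so every coefficient of $(F_2,G_2)$ lies in $\mathfrak m$, forcing $\ell_2>0$ and hence $R_{\varphi^2}>N\cdot R_\varphi$.

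I expect the main obstacle to be the careful bookkeeping in this last equivalence: one must be precise about the fact that a coefficient of $(F_n,G_n)$ has absolute value $>1$ \emph{exactly} when the corresponding coordinate function of $\Gamma_n$ vanishes under reduction, i.e. that the valuative/integral picture matches DeMarco's scheme-theoretic indeterminacy locus. This is where DeMarco's algebraic characterization of $I(d)$ over $\ZZ$ (\cite{DeMarco1}, Lemma 6) does the real work: it lets us say that $\varphi_v\notin I(d)$ iff $\Gamma_n$ is defined at $\varphi_v$ iff not all the coordinate polynomials defining $\Gamma_n$ vanish at $[\tilde a:\tilde b]$ iff $\ell_n=0$. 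It also suffices to treat $n=2$ for the ``only if'' direction, since $I(d)$ is independent of $n$ and $\ell_{n}\ge\ell_{2}$ once a single iterate forces a nonintegral coefficient; this observation keeps the induction from becoming unwieldy.
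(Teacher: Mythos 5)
Your overall strategy is sound and, in the forward direction, essentially reproduces the paper's argument: since the coefficients of $(F_n,G_n)$ are integer polynomial combinations of the normalized coefficients of $(F,G)$, the identity $R_{\varphi^n}=N\cdot R_\varphi-2d^n\ord(c_n)$ reduces the whole statement to whether $\ord(c_n)=0$, i.e.\ to whether the reduced coefficients $(\widetilde{a_n},\widetilde{b_n})$ are not all zero, and DeMarco's Lemmas 6--7 identify the common vanishing locus of the coefficient forms of $\Gamma_n$ with $I(d)$ --- the same input the paper invokes. Your reverse direction, argued by contrapositive ($\varphi_v\in I(d)$ forces every coefficient of $(F_2,G_2)$ into $\mathfrak{m}$, so the formula fails at $n=2$), is more direct than the paper's, which instead assumes the formula, introduces a nonvanishing form $H_n$ and the degree-zero quotient $\sigma=H_n^{2d^n}/\rho_{d^n}(A_n,B_n)^{d_n}$, and evaluates it two ways to conclude $|c_n|=1$. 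The paper's longer route is deliberate: the same computation is rerun with an $\SL_2$-invariant choice of $H_n$ in Proposition \ref{ssiterates}, which your shortcut would not support; your version is shorter but less reusable.

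The one genuine problem is that your bookkeeping runs in the wrong direction. Because the coefficients of $(F_n,G_n)$ are integer polynomials in coefficients of absolute value at most $1$, they always satisfy $|\cdot|\le 1$; hence $\ord(c_n)\ge 0$ and your $\ell_n=-\ord(c_n)$ is $\le 0$, not $\ge 0$. Coefficients never ``blow up'' past absolute value $1$ and there are no ``nonintegral coefficients''; the only failure mode is that \emph{all} coefficients fall into $\mathfrak{m}$, i.e.\ all reduced coordinates of $\Gamma_n$ vanish at $[\tilde{a}:\tilde{b}]$. Consequently, when $\varphi_v\in I(d)$ one gets $R_{\varphi^2}=N\cdot R_\varphi-2d^2\ord(c_2)<N\cdot R_\varphi$, the opposite of the inequality you assert (this direction of the estimate, $R_{\varphi^n}\le N\cdot R_\varphi$, is exactly how integrality is used in the proof of Theorem \ref{thm:mainthm}). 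Likewise your stated ``main obstacle'' --- that a single coefficient has absolute value $>1$ exactly when the corresponding coordinate of $\Gamma_n$ vanishes --- is not the right statement; what DeMarco supplies, and what you actually need, is that all coordinates of $\Gamma_n$ reduce to zero precisely when $\varphi_v\in I(d)$. None of this damages the logical skeleton, since the biconditional only distinguishes $\ord(c_n)=0$ from $\ord(c_n)>0$ (and one bad iterate, $n=2$, suffices, so the monotonicity remark is unnecessary), but the quantitative claims should be corrected before the argument is written up.
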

\begin{proof}



Let $n \in \NN$, and assume that $\varphi_v$ lies outside of $I(d)$. Let $(a,b) \in \AA^{2d+2}$ be normalized coefficients for $p$ with respect to $\varphi$. Let $(a_n, b_n) \in \AA^{2d^n+2}$ be the coefficients obtained by iteration. These coefficients are homogenous polynomials in the original coefficients; generically, $I(d)$ is the locus precisely where they all simultaneously vanish. Thus because $\varphi_v \notin I(d)$ (and because iteration commutes with taking reductions) at least one of the coefficients $(\widetilde{a_n}, \widetilde{b_n})$ evaluates to something non-zero. Therefore $(a_n, b_n)$ are normalized coefficients for $p$ with respect to $\varphi^n$, and $[\widetilde{a_n}, \widetilde{b_n}]=(\varphi^n)_v = \Gamma_n(\varphi_v)$, and so by Lemma \ref{resultant}, we get the desired formula.

Conversely, assume the formula holds for each $n$, and fix a particular $n$. 
Let $(A_n,B_n)$ be $2d^n+2$ \textit{indeterminates} corresponding to coefficients of a generic map of degree $d^n$; for $n=1$, write $(A_1, B_1) = (A,B)$. Let $(a,b)=(a_1,b_1)$ be the coefficients of a particular map $\varphi$ and let $(a_n,b_n)$ be the coefficients obtained by iteration. Also let $(A_n(A,B), B_n(A,B))$ be generic coefficients for an iterate map (i.e. plug in the iterate formula for each coefficient). Note $(A_n(a,b), B_n(a,b))=(a_n,b_n)$.  Let $c_n \in K$ be chosen so that $(c_na_n,c_nb_n)$ are normalized. Let $H_n(A_n,B_n)$ be a homogeneous form of degree $d_n$ such that $$
|H_n(c_na_n, c_nb_n))|_v = 1$$ Such a form must exist because $[\widetilde{c_n}\cdot\widetilde{a_n}, \widetilde{c_n}\cdot\widetilde{b_n}]$ is by assumption a well defined point of projective space; in fact, this form can simply be linear, taken to be one of the coefficients that, by assumption, doesn't vanish. The reason for leaving it in a more general form is that later we will reference this proof in a slightly different context (see Proposition \ref{ssiterates} below). Now, factoring out $c_n$, this says 
\begin{equation}|c_n|_v^{d_n}|H_n(a_n,b_n)|_v = 1 \label{unit}
\end{equation}

To show that $\varphi_v \notin I(d)$ is to show that $|c_n| = 1$: This is the same reasoning as above: if the coefficients are already normalized, their reduction after iteration has a non-zero entry, and so the reduction before iteration lies outside of $I(d)$). So we are done, by  (\ref{unit}), if we can show that $H_n(a_n,b_n)$ is non-vanishing at $p$. Let 
\[
\sigma = \frac{H_n(A_n,B_n)^{2d^n}}{\rho_{d^n}(A_n, B_n)^{d_n}}
\]
The exponents are chosen so that $\sigma$ has total degree zero. Evaluating $\sigma$ on $(c_na_n,c_nb_n)$ gives: 

\[
\sigma(c_na_n, c_nb_n) = \frac{H_n(c_na_n, c_nb_n)^{2d^n}}{\rho_{d^n}(c_na_n,c_nb_n)^{d_n}}
\] 
On the other hand, we have:
\begin{align*}
\sigma &= \frac{H_n(A_n(A,B), B_n(A,B))^{2d^n}}{\rho_{d^n}(A_n(A,B), B_n(A,B))^{d_n}} \\
	&= \frac{H_n(A_n(A,B), B_n(A,B))^{2d^n}}{\rho_d(A,B)^{Nd_n}} \textrm{     (By Lemma \ref{resultant}}) \\
\end{align*}
Then since $\sigma$ is total degree $0$,
\begin{align*}
	\sigma(c_na_n, c_nb_n) &= \sigma(a_n, b_n) \\&= \sigma(A_n(a,b), B_n(a,b)) \\ &= \frac{H_n(A_n(a,b), B_n(a,b))^{2d^n}}{\rho_d(a,b)^{Nd_n}} \\
	& = \frac{H_n(a_n, b_n)^{2d^n}}{\rho_d(a,b)^{Nd_n}}
\end{align*}

Thus:

\[
\frac{H_n(a_n, b_n)^{2d^n}}{\rho_d(a,b)^{Nd_n}} = \frac{H_n(c_na_n, c_nb_n)^{2d^n}}{\rho_{d^n}(c_na_n,c_nb_n)^{d_n}}
\]

Taking valuations, the right hand side is equal to $0-N\cdot R_{\varphi} \cdot d_n$ by the choice of $H_n(A_n,B_n)$ and the assumption about what the resultant is equal to (note that the coefficients are normalized). The left hand side, on the other hand, involves normalized coefficients for $\varphi$, and so its valuation is $\ord(H_n(a_n,b_n)^{2d^n})- N\cdot R_{\varphi} \cdot d_n$. Hence $|H_n(a_n,b_n)| = 1$ and we are done.

%
\end{proof}


\subsection{Semi-stability} \label{semistability}

To address the question of minimality, we will invoke a connection between semistability and minimality of the resultant.

 In \cite{silverman:space}, Silverman studied the GIT quotient $\Md$ of $\Ratd$ by the conjugation action of $\SL_2$. Crucial to this construction is the semi-stable locus $(\PP^{2d+1})^{ss}$, an open subscheme of $\PP^{2d+1}$ that contains $\Ratd$. Intuitively, it is the largest subscheme of $\PP^{2d+1}$ on which a quotient scheme makes sense. The following is a useful explicit way to think of the semi-stable locus. Let $A_d = \ZZ[a,b]$. $\Md$ and $\Ratd$ are affine schemes, defined over $\ZZ$, and the map between them is given by the map of rings $(A_d)_{(\rho_d)}^{\SL_2} \rightarrow (A_d)_{(\rho_d)}$ (the superscript indicates $\SL_2$ invariant functions). The quotient space $\Md^{ss}$ is $\mathrm{Proj}(A_d^{\SL_2})$ and $(\PP^{2d+1})^{ss}$ is simply the largest open set of $\PP^{2d+1}$ on which the inclusion of graded rings $A_d^{\SL_2} \rightarrow A_d$ induces a morphism of schemes. In this way, the semi-stable points are the complement of the indeterminacy locus for the quotient map.
 
 In \cite{STW}, the second author with Szpiro and Tepper proved that maps on $\mathbb{P}^1$ with semi-stable reduction have minimal valuation for the resultant:
 
 \begin{prop} \label{semistableimpliesminimal}
 Suppose that $\varphi$ has semi-stable reduction at $p$. Then $R_{\varphi} =R_{[\varphi]}$.
 \end{prop}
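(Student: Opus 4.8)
The plan is to combine the defining property of the semistable locus with the classical transformation law for the resultant under conjugation. Recall that $(\PP^{2d+1})^{ss}$ is the union of the basic open sets $D_+(H)$, as $H$ runs over the homogeneous elements of positive degree in $A_d^{\SL_2}=\ZZ[a,b]^{\SL_2}$. Thus the hypothesis that $\varphi$ has semi-stable reduction means exactly that there is a homogeneous $H\in A_d^{\SL_2}$ of some degree $e>0$, with coefficients in $\ZZ\subseteq\Ocal$, which does not vanish at the reduction point $[\tilde a:\tilde b]$ of $\varphi$. Fixing a normalized lift $\Phi=[F,G]$ of $\varphi$ with coordinate vector $(a,b)\in\Ocal^{2d+2}$, the value $H(a,b)\in\Ocal$ reduces to $H(\tilde a,\tilde b)\neq 0$ in $k$, so $H(a,b)$ is a unit: $|H(a,b)|_v=1$.

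Next I would track both $H$ and the resultant under conjugation. Since $K$ is algebraically closed, every element of $\PGL_2(K)$ has a representative in $\SL_2(K)$, and replacing $\gamma$ by such a representative does not change $\varphi^\gamma$; so fix $\gamma\in\SL_2(K)$ and let $\Phi^\gamma$ denote the (not necessarily normalized) lift of $\varphi^\gamma$ obtained from $\Phi$ by conjugating by $\gamma$. Two facts are needed. First, $H$ is invariant under the $\SL_2$-conjugation action, so $H(\Phi^\gamma)=H(\Phi)$ and hence $|H(\Phi^\gamma)|_v=1$. Second, the classical identity $\Res(\Phi^\gamma)=(\det\gamma)^{d(d-1)}\Res(\Phi)$ gives $\Res(\Phi^\gamma)=\Res(\Phi)$, since $\det\gamma=1$. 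Writing $\Phi^\gamma=c\cdot\Psi$ with $c\in K^\times$ and $\Psi$ a normalized lift of $\varphi^\gamma$, homogeneity of $H$ of degree $e$ gives $1=|H(\Phi^\gamma)|_v=|c|_v^{\,e}\,|H(\Psi)|_v$; since $\Psi$ has entries in $\Ocal$ and $H$ has coefficients in $\Ocal$, we get $|H(\Psi)|_v\le 1$, and therefore $|c|_v\ge 1$, i.e.\ $\ord(c)\le 0$.

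To conclude, homogeneity of $\Res$ of degree $2d$ gives $\Res(\Psi)=c^{-2d}\Res(\Phi^\gamma)$, hence
\[
R_{\varphi^\gamma}=\ord\bigl(\Res(\Psi)\bigr)=\ord\bigl(\Res(\Phi^\gamma)\bigr)-2d\,\ord(c)=R_\varphi-2d\,\ord(c)\ \ge\ R_\varphi,
\]
where we used that $\Phi$ is normalized, so $\ord\bigl(\Res(\Phi)\bigr)=R_\varphi$, and that $\ord(c)\le 0$. Minimizing over $\gamma\in\PGL_2(K)$ gives $R_{[\varphi]}\ge R_\varphi$, while the reverse inequality is immediate on taking $\gamma$ to be the identity. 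Therefore $R_\varphi=R_{[\varphi]}$.

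The argument is short, so there is no single hard step; the crux is the translation of the geometric semi-stability hypothesis into the \emph{integral} algebraic statement that some positive-degree $\SL_2$-invariant with $\ZZ$-coefficients is a unit when evaluated on a normalized lift of $\varphi$. Once that is available, the unit property is automatically inherited by every $\SL_2(K)$-conjugate, and the remainder is valuation bookkeeping; one should also be slightly careful that working with an $\SL_2$ (rather than $\PGL_2$) representative is what kills the $(\det\gamma)^{d(d-1)}$ factor. A more hands-on variant would avoid invariant theory altogether and appeal instead to the Hilbert--Mumford numerical criterion describing $(\PP^{2d+1})^{ss}$, relating conjugation of $\varphi$ by diagonal one-parameter families to the weights that characterize semi-stability; I find the invariant-theoretic version cleaner to write out.
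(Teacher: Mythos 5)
Your proof is correct, and it is essentially the argument behind the result the paper invokes: the paper gives no proof beyond citing \cite{STW} (Theorem 3.3), and your route --- extract a positive-degree $\SL_2$-invariant form $H\in A_d^{\SL_2}$ that is a unit on a normalized lift, use $\SL_2$-invariance of $H$ together with $\Res(\Phi^\gamma)=(\det\gamma)^{d(d-1)}\Res(\Phi)$ and homogeneity to bound the renormalizing scalar, hence $R_{\varphi^\gamma}\geq R_\varphi$ for all $\gamma$ --- is the same unit/invariant computation that underlies both the cited theorem and the paper's own Propositions \ref{minimaliterate} and \ref{ssiterates}. The only step worth flagging is your identification of semistability of the reduction with non-vanishing at $[\tilde a:\tilde b]$ of some invariant with $\ZZ$-coefficients; this is exactly how the paper sets up $(\PP^{2d+1})^{ss}$ via the integral model $\mathrm{Proj}(A_d^{\SL_2})$, so within that framework your translation is legitimate as stated.
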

 \begin{proof}
 See \cite{STW}, Theorem 3.3. 
 \end{proof}
 In \cite{Rumely}, Rumely proves the same result using very different techniques.
 
A concrete description of the semi-stable points is provided by Silverman in \cite{silverman:space}, using the Hilbert-Mumford numerical criterion. For now, let $k$ be any field and let $F, G$ be the homogenous polynomials corresponding to $[a,b] \in \PP^{2d+1}(k)$. Write $A=$gcd$(F,G)$, and let $F=A \cdot F_0, G= A\cdot G_0$, and write $\widetilde{\varphi}$ for the morphism $[F_0, G_0]$ of $\PP^1(k)$. Following DeMarco in \cite{DeMarco2}, the point $[a,b]\in\PP^{2d+1}(k)$ has a \textit{hole} $h\in \PP^1(k)$ of depth $r$ if $h$ is a root of $A$ of multiplicity $r$. The numerical criterion essentially says that semi-stable points have holes of limited depth:
 
 \begin{prop} \label{numericalcriterion}
 If $d$ is even then $[a,b] \in \PP^{2d+1}(k)$ is in the semistable locus iff $d$ has a hole of depth $\leq d/2$, and if the depth of the hole $h$ is $d/2$, then  then there is the additional requirement that  $\widetilde{\varphi}(h) \neq h$.  If $d$ is odd then $[a,b] \in \PP^{2d+1}(k)$ is in the semistable locus iff $d$ has a hole of depth $\leq (d+1)/2$, and if the depth of the hole $h$ is $(d+1)/2$, then there is the additional requirement that $\widetilde{\varphi}(h) \neq h$. 
 \end{prop}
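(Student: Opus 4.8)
The plan is to derive this from the Hilbert--Mumford numerical criterion for the $\SL_2$-conjugation action on $\PP^{2d+1}$, in the spirit of Silverman's treatment in \cite{silverman:space}. Recall that criterion: for the (linearized) $\SL_2$-action, a point $x$ is semistable if and only if $\mu(x,\lambda)\geq 0$ for every one-parameter subgroup $\lambda$ of $\SL_2$, where, writing $\lambda(t)\cdot x = [t^{r_0}x_0 : \cdots : t^{r_{2d+1}}x_{2d+1}]$ in a $\lambda$-eigenbasis, one sets $\mu(x,\lambda) = -\min\{\,r_j : x_j \neq 0\,\}$. The first reduction is standard: every nontrivial one-parameter subgroup of $\SL_2$ over the algebraic closure is conjugate to a positive power of the diagonal subgroup $\lambda_t = \diag(t,t^{-1})$ (note $\diag(t^{-1},t)$ is itself $\SL_2$-conjugate to $\lambda_t$), and $\mu(gx, g\lambda g^{-1}) = \mu(x,\lambda)$ with $\mu(x,\lambda^m) = m\,\mu(x,\lambda)$. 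Hence $[a,b]$ is semistable if and only if $\mu\big([a,b]^\gamma, \lambda_t\big) \geq 0$ for every $\gamma$ (in $\SL_2$, equivalently $\PGL_2$, since the center acts trivially in the conjugation action); when $k$ is not algebraically closed one passes to $\bar k$, and the holes and the fixed-point condition below are Galois-stable, so nothing is lost.

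Next I would compute $\mu([a,b],\lambda_t)$ explicitly in coordinates. A direct calculation with the conjugation action on a lift $[F,G]$, $F = \sum_i a_i X^i Y^{d-i}$, $G = \sum_i b_i X^i Y^{d-i}$, shows that $\lambda_t$ scales $a_i$ by $t^{2i-d-1}$ and $b_i$ by $t^{2i-d+1}$. So $\mu([a,b],\lambda_t)$ is controlled by the lowest index $i$ at which a coefficient fails to vanish: if $a_j = b_j = 0$ for all $j < r$ and $(a_r,b_r)\neq(0,0)$ --- that is, if $[0:1]$ is a hole of depth $r$ for $[a,b]$ --- then
\[
\mu\big([a,b],\lambda_t\big) = \begin{cases} d+1-2r, & a_r \neq 0, \\ d-1-2r, & a_r = 0. \end{cases}
\]
The point is to read off the dichotomy geometrically. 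Writing $A = \gcd(F,G)$, $F = A\cdot F_0$, $G = A\cdot G_0$ with $X^r$ exactly dividing $A$ and $\gcd(F_0,G_0)=1$, one has $a_r = 0$ precisely when $X \mid F_0$, i.e. $F_0([0:1])=0$, and since then $G_0([0:1])\neq 0$ this is exactly $\widetilde{\varphi}([0:1]) = [0:1]$. Conjugating a general hole $h$ to $[0:1]$ preserves depth and carries the condition $\widetilde{\varphi}(h)=h$ to $\widetilde{\varphi^\gamma}([0:1])=[0:1]$, so for the conjugate moving $h$ to $[0:1]$ the formula becomes
\[
\mu\big([a,b]^\gamma, \lambda_t\big) = \begin{cases} d+1-2r_h, & \widetilde{\varphi}(h)\neq h, \\ d-1-2r_h, & \widetilde{\varphi}(h)= h, \end{cases}
\]
where $r_h$ is the depth of the hole at $h$, taken to be $0$ when $h$ is not a hole (in which case $\mu>0$ automatically since $d\geq 2$).

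Finally I would assemble the criterion. Semistability holds if and only if both quantities above are $\geq 0$ for every hole $h$, i.e. $r_h \leq (d+1)/2$ whenever $\widetilde{\varphi}(h)\neq h$ and $r_h \leq (d-1)/2$ whenever $\widetilde{\varphi}(h)=h$. Unwinding the parity of $d$: for $d$ even this says exactly that every hole has depth $\leq d/2$, with $\widetilde{\varphi}(h)\neq h$ forced when the depth is $d/2$; for $d$ odd it says every hole has depth $\leq (d+1)/2$, with $\widetilde{\varphi}(h)\neq h$ forced when the depth is $(d+1)/2$. This is the asserted statement. The routine ingredient is the weight computation; the step requiring real care --- and the main thing to pin down --- is the equivalence between ``$a_r = 0$'' and ``$h$ is a fixed point of the residue map $\widetilde{\varphi}$'', together with the elementary but error-prone parity bookkeeping that converts the inequalities $r_h \leq (d\pm 1)/2$ into the stated integer bounds; one should also verify carefully that testing the single one-parameter subgroup $\lambda_t$ over all conjugates genuinely suffices, and handle a non-algebraically-closed residue field by base change.
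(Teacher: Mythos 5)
Your proposal is correct, and it is essentially the paper's own proof: the paper simply cites Silverman's Proposition 2.2 (in the formulation of DeMarco), and Silverman's argument is precisely the Hilbert--Mumford computation you carry out --- reduce to the diagonal one-parameter subgroup by conjugation, read off the weights $2i-d-1$ on $a_i$ and $2i-d+1$ on $b_i$, and translate the resulting inequalities into the depth-of-hole and fixed-point conditions, with the parity bookkeeping you describe. Your weight formula, the identification of ``$a_r=0$'' with $\widetilde{\varphi}(h)=h$ for the hole $h=[0:1]$, and the reduction to $\bar k$ are all accurate, so the rederivation stands as a valid substitute for the citation.
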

 \begin{proof}
 See \cite{silverman:space}, Proposition 2.2; we've formulated the statement following DeMarco \cite{DeMarco2}. 
 \end{proof}

Note that the semi-stable locus contains some constant maps, and thus may overlap with $I(d)$. In fact, DeMarco shows that the semi-stable locus and $I(d)$ have non-trivial intersection for all $d > 2$. Also, clearly the complement of $I(d)$ contains many points that are not semi-stable. Further, semi-stability is not always preserved under iteration. 

In the next section, we will describe a probability measure, introduced by DeMarco, which is connected with the numerical criterion above. The measure gives the asymptotic behavior of the depth of the hole at $z$ for higher iterates, relative to the degree of the iterate. Using this measure, the numerical criterion implies that maps stay semi-stable under iteration if and only if there are no points with mass greater than $\frac{1}{2}$ (see Propositions 3.2 and 3.3 of \cite{DeMarco2}).

\section{Proof of the Main Theorem}\label{mainresult}

In this section, we prove the local version of the main theorem:

\begin{thm}\label{thm:mainthm}
Let $K$ be a complete, algebraically closed non-Archimedean valued field, and let $\varphi\in K(z)$ have degree $d\geq 2$. 

The minimal resultant iteration formula $R_{[\varphi^n]} = N \cdot R_{[\varphi]}$ holds if and only if in any coordinate system where $\varphi$ has semistable reduction, we have that $\varphi_v \not\in I(d)$ and $\varphi^n$ has semistable reduction for all $n$. 
\end{thm}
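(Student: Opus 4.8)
The plan is to prove the two implications separately, after recording one observation that is used in both directions. \emph{Observation.} For every $\varphi$ and every $n$ one has $R_{\varphi^n}\le N\cdot R_\varphi$, with equality for all $n$ precisely when $\varphi_v\notin I(d)$. For the inequality, choose a normalized lift $(a,b)$ of $\varphi$; the iterated coefficients $(a_n,b_n)$ arise from $(a,b)$ by integer-coefficient polynomial substitution, so by the ultrametric inequality each has absolute value $\le 1$, and a scalar $c_n$ normalizing $(a_n,b_n)$ then satisfies $\ord(c_n)\le 0$. By Lemma~\ref{resultant},
\[
R_{\varphi^n}=\ord\!\big(\rho_{d^n}(c_na_n,c_nb_n)\big)=2d^n\,\ord(c_n)+N\cdot\ord\!\big(\rho_d(a,b)\big)=2d^n\,\ord(c_n)+N\cdot R_\varphi\ \le\ N\cdot R_\varphi,
\]
and the equality clause is exactly Proposition~\ref{minimaliterate}. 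Minimizing over conjugates, and using the standard fact that $\varphi$ has a $\PGL_2(K)$-conjugate with semistable reduction (so the minimum defining $R_{[\varphi]}$ is attained, at such a conjugate), this already gives $R_{[\varphi^n]}\le N\cdot R_{[\varphi]}$ unconditionally; the theorem is thus a statement about when the reverse inequality holds.

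\emph{The ``if'' direction.} Pick $\gamma$ with $\psi:=\varphi^\gamma$ semistable. By hypothesis $\psi_v\notin I(d)$ and every $\psi^n=(\varphi^n)^\gamma$ has semistable reduction. Then $R_\psi=R_{[\varphi]}$ by Proposition~\ref{semistableimpliesminimal}; $R_{\psi^n}=N\cdot R_\psi$ by Proposition~\ref{minimaliterate} applied to $\psi$; and $R_{\psi^n}=R_{[\psi^n]}=R_{[\varphi^n]}$ by Proposition~\ref{semistableimpliesminimal} applied to the semistable iterate $\psi^n$. Concatenating these equalities yields $R_{[\varphi^n]}=N\cdot R_{[\varphi]}$ for every $n$.

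\emph{The ``only if'' direction.} Assume the formula for all $n$, and let $\gamma$ be an arbitrary coordinate change making $\psi:=\varphi^\gamma$ semistable; we must verify the two asserted properties of $\psi$. Since $R_\psi=R_{[\varphi]}$ by Proposition~\ref{semistableimpliesminimal}, for each $n$
\[
N\cdot R_\psi=N\cdot R_{[\varphi]}=R_{[\varphi^n]}=R_{[\psi^n]}\le R_{\psi^n}\le N\cdot R_\psi,
\]
whence $R_{\psi^n}=N\cdot R_\psi=R_{[\psi^n]}$ for all $n$. The first equality, holding for all $n$, gives $\psi_v\notin I(d)$ by Proposition~\ref{minimaliterate}. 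The second equality says $\psi^n$ attains its own minimal resultant, and invoking the converse of Proposition~\ref{semistableimpliesminimal} this forces $\psi^n$ to have semistable reduction. As $\gamma$ ranged over all coordinate changes making $\varphi$ semistable, this establishes the stated condition.

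The main obstacle is exactly this last step of the ``only if'' direction: deducing semistable reduction of $\psi^n$ from minimality of $R_{\psi^n}$. Proposition~\ref{semistableimpliesminimal} as quoted gives only the easy implication; its converse — that a conjugate attaining the minimal resultant has semistable reduction — is what makes the ``semistable for all $n$'' clause non-vacuous, given that semistability is not in general preserved by iteration. I would supply it either by citing Rumely's analysis of the minimal resultant locus (at whose type-II points the corresponding coordinate systems are precisely the semistable ones), or by rerunning the $\sigma$-argument from the proof of Proposition~\ref{minimaliterate} with $H_n$ taken to be the Hilbert--Mumford form attached by the numerical criterion (Proposition~\ref{numericalcriterion}) to $\psi^n$, transporting its non-vanishing across the iteration identity of Lemma~\ref{resultant}; this is presumably the ``slightly different context'' anticipated in Proposition~\ref{ssiterates}. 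A minor further point to pin down is the existence of a semistable conjugate and attainment of the minimum in $R_{[\varphi]}$, which follow from properness of the GIT quotient $\Mdss$ over $\Spec\Ocal$.
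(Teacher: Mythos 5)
Your proposal is correct and follows essentially the same route as the paper: the same ingredients (Lemma~\ref{resultant}, Proposition~\ref{minimaliterate}, Proposition~\ref{semistableimpliesminimal}) assembled by the same sandwich argument in both directions. The ``obstacle'' you flag --- that attaining the minimal resultant forces semistable reduction --- is exactly what the paper's own proof uses (implicitly, in the step ``suppose $\varphi^n$ does not have semistable reduction; then $R_{[\varphi^n]} < R_{\varphi^n}$''), and it is supplied by Rumely's Theorem~7.4, quoted as Proposition~\ref{prop:minresss}, so citing it as you suggest is precisely the paper's justification.
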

\begin{proof}
Suppose first that 
\begin{equation}\label{eq:minresiterformula}
R_{[\varphi^n]} = N\cdot R_{[\varphi]}\ ,
\end{equation} and fix coordinates so that $\varphi$ has semistable reduction. Let $\varphi = [F,G]$ be a normalized lift of $\varphi$, with $$F(X,Y) = a_d X^d + ... + a_0 Y^d\ , \ G(X,Y) = b_d X^d + ... + b_0 Y^d\ .$$ By Proposition~\ref{semistableimpliesminimal}, we find
\[
R_{[\varphi]} = \ord(\Res(F,G))\ .
\]
If we pass to an iterate $\varphi^n$, we may encounter two problems: first, $\varphi^n$ might no longer attain semistable reduction; second, the corresponding lift $\varphi^n$ may not be normalized. Nevertheless, there is a formula for computing the normalized resultant (see, e.g., \cite{Ru1} Equation (8)):
\begin{align*}
R_{\varphi^n} &= \ordRes(F^n, G^n) - 2d^n \min_{0\leq i, j\leq d}(\ord(a_i^n), \ord(b_j^n))\ ,
\end{align*} where $a_i^n, b_j^n$ are the coefficients of the coordinate polynomials of $\varphi^n = [F^n, G^n]$. Using the iteration formula for the resultant given in Lemma~\ref{resultant} above gives
\begin{align*}
R_{\varphi^n} & = N \ordRes(F,G) - 2d^n \min_{0\leq i, j \leq d^n}(\ord(a_i^n), \ord(b_j^n))\ .
\end{align*}
 Now, suppose $\varphi^n$ does not have semistable reduction. Then $R_{[\varphi^n]} <  R_{\varphi^n}$, and we find
\begin{align*}
N\cdot \ord\Res(F,G) &= N\cdot R_{[\varphi]})\\
& = R_{[\varphi^n]} \\
 &< R_{\varphi^n}\\
 & = N \ord\Res(F,G) - 2d^n \min_{0\leq i, j \leq d^n} \min(\ord(a_i^n), \ord(b_j^n))\ .
\end{align*} Cancelling the common factor of $N\cdot \ord\Res(F,G)$ and reversing the inequality gives
\begin{equation}\label{eq:contradiction}
0 > 2d^n \min_{0 \leq i, j \leq d^n} \min(\ord(a_i^n), \ord(b_j^n))\ ;
\end{equation} but recall that our lift $\varphi = [F,G]$ of $\varphi$ was normalized, and the coefficients $a_i^n, b_j^n$ are polynomial combinations of the coefficients of $F,G$. Taking polynomial combinations cannot decrease the ord value, hence (\ref{eq:contradiction}) is a contradiction. We conclude that $\varphi^n$ has semistable reduction as well.

In particular, (\ref{eq:minresiterformula}) now reads
\[ 
R_{\varphi^n} = R_{[\varphi^n]} = N\cdot R_{[\varphi]} = N\cdot R_{\varphi}\ , 
\] and so by Proposition~\ref{minimaliterate} we conclude that $\varphi_v \not\in I(d)$. This completes the proof of the forward implication of the main theorem.

For the reverse implication, suppose that we have chosen a coordinate system in which $\varphi^n$ has semistable reduction for all $n$, and also for which $\varphi_v\not\in I(d)$. Combining Propositions~\ref{minimaliterate} and~\ref{semistableimpliesminimal} gives
\[
R_{[\varphi^n]} = R_{\varphi^n} = N\cdot R_{\varphi} = N\cdot R_{[\varphi]}\ ,
\] which is the asserted equality.
\end{proof}

\section{An Equivalent Condition: Barycenters and Minimal Resultant Locus} \label{Berk}
In this section we give two geometric conditions on the Berkovich line $\pberk$ which are equivalent to the conditions given in the main theorem:

\begin{cor} \label{cor:Berkcorollary}The following are equivalent:
\begin{enumerate}
\item[(A)] The minimal resultant iteration formula $R_{[\varphi^n]} = N\cdot R_{[\varphi]}$ holds for every $n$.
\item[(B)] For each $n$, $\MinResLoc(\varphi) \subseteq \MinResLoc(\varphi^n)$, and for every $\zeta = \gamma(\zetaG)$ in $\MinResLoc(\varphi)$, we have $(\varphi^\gamma)_v \not\in I(d)$. 
\item[(C)] There exists $\zeta\in \Bary(\mu_\varphi)$ with $\zeta = \gamma(\zetaG)$ and $(\varphi^\gamma)_v \not\in I(d)$.
\end{enumerate} 
\end{cor}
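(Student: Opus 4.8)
Here is how I would prove Corollary~\ref{cor:Berkcorollary}.

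\textbf{Overall plan.} I would prove the two equivalences (A)$\Leftrightarrow$(B) and (A)$\Leftrightarrow$(C) separately. The first is elementary, using only Rumely's resultant function $\ordRes_\varphi$ on $\pberk$ together with the lemmas of Section~\ref{sect:lemmas}; the second passes through the two residue measures and Theorem~\ref{thm:mainthm}.

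\textbf{Step 1: (A)$\Leftrightarrow$(B).} First recall that for a type~II point $\zeta=\gamma(\zetaG)$ one has $\ordRes_\varphi(\zeta)=R_{\varphi^\gamma}$, that $\ordRes_\varphi$ is continuous on $\hberk$, and that $\MinResLoc(\varphi)$ is either a single type~II point or a closed segment between two type~II points, so it always contains a type~II point. Combining Rumely's normalization formula (\cite{Ru1}, Eq.~(8)) with Lemma~\ref{resultant} gives, for any type~II point $\zeta=\gamma(\zetaG)$ and any $n$,
\[
R_{(\varphi^n)^\gamma}\;=\;N\,R_{\varphi^\gamma}\;-\;2d^n\,\varepsilon_n(\gamma),\qquad \varepsilon_n(\gamma):=\min_{i,j}\bigl(\ord a^n_i,\ \ord b^n_j\bigr)\;\ge\;0,
\]
where $a^n_i,b^n_j$ are the coefficients of the $n$-fold iterate of a normalized lift of $\varphi^\gamma$; and by Proposition~\ref{minimaliterate} applied to $\varphi^\gamma$, one has $\varepsilon_n(\gamma)=0$ for all $n$ if and only if $(\varphi^\gamma)_v\notin I(d)$. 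If (A) holds and $\zeta=\gamma(\zetaG)\in\MinResLoc(\varphi)$, so $R_{\varphi^\gamma}=R_{[\varphi]}$, then
\[
N\,R_{[\varphi]}\;=\;R_{[\varphi^n]}\;\le\;R_{(\varphi^n)^\gamma}\;=\;N\,R_{[\varphi]}-2d^n\,\varepsilon_n(\gamma)\;\le\;N\,R_{[\varphi]},
\]
which forces $\varepsilon_n(\gamma)=0$ for all $n$ (hence $(\varphi^\gamma)_v\notin I(d)$) and $R_{(\varphi^n)^\gamma}=R_{[\varphi^n]}$ (hence $\zeta\in\MinResLoc(\varphi^n)$); this is exactly (B). Conversely, given (B), fix $n$ and pick a type~II point $\zeta=\gamma(\zetaG)\in\MinResLoc(\varphi)$: then $(\varphi^\gamma)_v\notin I(d)$ gives $R_{(\varphi^n)^\gamma}=N\,R_{\varphi^\gamma}=N\,R_{[\varphi]}$, while $\zeta\in\MinResLoc(\varphi^n)$ gives $R_{(\varphi^n)^\gamma}=R_{[\varphi^n]}$, so $R_{[\varphi^n]}=N\,R_{[\varphi]}$, which is (A).

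\textbf{Step 2: (A)$\Leftrightarrow$(C).} Let $\rho^{\mathrm{DM}}_{\varphi^\gamma}$ denote DeMarco's residue measure attached to the reduction $(\varphi^\gamma)_v$, recording the asymptotic hole depths of the iterates $(\varphi^n)^\gamma$ (cf.\ \cite{DeMarco1} and the end of Section~\ref{semistability}). The essential input --- and this is where the second residue measure, from \cite{DF}, is used --- is the comparison: \emph{if $(\varphi^\gamma)_v\notin I(d)$ then $\rho^{\mathrm{DM}}_{\varphi^\gamma}(\{z\})=\mu_\varphi(B_{z,\zeta})$ for every residual direction $z$}, where $\zeta=\gamma(\zetaG)$ and $B_{z,\zeta}$ is the component of $\pberk\setminus\{\zeta\}$ in the direction $z$. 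Granting this and applying the numerical criterion (Proposition~\ref{numericalcriterion}) at every iterate, one gets, for any $\gamma$ with $(\varphi^\gamma)_v\notin I(d)$,
\[
\gamma(\zetaG)\in\Bary(\mu_\varphi)\;\iff\;\rho^{\mathrm{DM}}_{\varphi^\gamma}\text{ has no atom of mass}>\tfrac12\;\iff\;(\varphi^n)^\gamma\text{ is semistable for all }n,
\]
the last step being the criterion of Propositions~3.2--3.3 of \cite{DeMarco2} recalled in Section~\ref{semistability} (for such $\gamma$ the $n=1$ hole depths are dominated by $d\cdot\rho^{\mathrm{DM}}_{\varphi^\gamma}(\{\cdot\})\le d/2$, so $\varphi^\gamma$ itself is semistable). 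Now for (A)$\Rightarrow$(C): by (A) and Theorem~\ref{thm:mainthm}, in any coordinate system $\gamma$ where $\varphi$ has semistable reduction we have $(\varphi^\gamma)_v\notin I(d)$ and $(\varphi^n)^\gamma$ semistable for all $n$; such a $\gamma$ exists because every degree-$d$ map over $K$ admits a $\PGL_2(K)$-conjugate with semistable reduction, and the displayed equivalence then gives $\gamma(\zetaG)\in\Bary(\mu_\varphi)$, so $\zeta:=\gamma(\zetaG)$ witnesses (C). For (C)$\Rightarrow$(A): given $\zeta=\gamma(\zetaG)\in\Bary(\mu_\varphi)$ with $(\varphi^\gamma)_v\notin I(d)$, the displayed equivalence shows $(\varphi^n)^\gamma$ is semistable for all $n$; this is precisely the hypothesis of Theorem~\ref{thm:mainthm}, whose reverse direction then yields (A).

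\textbf{Main obstacle.} The hard part is the measure comparison $\rho^{\mathrm{DM}}_{\varphi^\gamma}(\{z\})=\mu_\varphi(B_{z,\zeta})$ and pinning down why $(\varphi^\gamma)_v\notin I(d)$ is exactly the condition under which it holds. Heuristically $\rho^{\mathrm{DM}}_{\varphi^\gamma}(\{z\})$ is the asymptotic fraction of the preimage mass of $\varphi^n$ that collapses onto the residual direction $z$, which ought to be $\mu_\varphi(B_{z,\zeta})$; but when $(\varphi^\gamma)_v\in I(d)$ this mass escapes into the cancelled root (the constant value of the residue map $\widetilde{\varphi^\gamma}$), destroying the identity. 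Establishing it off $I(d)$ is exactly the purpose of the auxiliary residue measure of \cite{DF}, which tracks the resultant defect directly; one must show the two residue measures coincide outside $I(d)$. A secondary technical nuisance is the boundary case of the numerical criterion --- a residual direction carrying $\mu_\varphi$-mass exactly $\tfrac12$ must be handled via the fixed-point condition on $\widetilde{\varphi^\gamma}$ --- which one should check does not interfere with semistability of the iterates.
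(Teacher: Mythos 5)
Your proposal is correct, but it takes a genuinely different route from the paper. The paper proves the cycle (A)$\Rightarrow$(B)$\Rightarrow$(C)$\Rightarrow$(A): it gets (A)$\Rightarrow$(B) from Theorem~\ref{thm:mainthm} together with Rumely's characterization of $\MinResLoc$ by semistable reduction (Proposition~\ref{prop:minresss}); it gets (B)$\Rightarrow$(C) from the convergence result Proposition~\ref{prop:convergenceofMinResLoc}, which forces any point lying in every $\MinResLoc(\varphi^n)$ into $\Bary(\mu_\varphi)$; and it gets (C)$\Rightarrow$(A) from the measure identity Proposition~\ref{prop:measuresagree} together with Propositions~3.2--3.3 of \cite{DeMarco2} and Theorem~\ref{thm:mainthm}. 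You instead prove (A)$\Leftrightarrow$(B) directly from the normalization formula, Lemma~\ref{resultant} and Proposition~\ref{minimaliterate}, squeezing $R_{(\varphi^n)^\gamma}$ between $R_{[\varphi^n]}$ and $N\,R_{[\varphi]}$ at a type II point of $\MinResLoc(\varphi)$; this bypasses both Proposition~\ref{prop:minresss} and Proposition~\ref{prop:convergenceofMinResLoc} and is more elementary. Your (A)$\Leftrightarrow$(C) runs through Theorem~\ref{thm:mainthm} and the comparison of the two residue measures; note that this comparison is exactly the paper's Proposition~\ref{prop:measuresagree}, already established before the corollary, so what you flag as the main obstacle is available as a prior result rather than new work, and your use of DeMarco's criterion in both directions (the paper needs only the direction ``no atom of mass $>\tfrac12$ implies all iterates semistable'') is covered by the same citation. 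The one small point to add to your (A)$\Rightarrow$(B): your squeeze argument places every type II point of $\MinResLoc(\varphi)$ into $\MinResLoc(\varphi^n)$, but $\MinResLoc(\varphi)$ may be a segment containing type III points, so you should finish with a density-and-continuity remark (type II points are dense in the segment, $\ordRes_{\varphi^n}$ is continuous, and its minimum locus is closed); the paper's own proof elides the same point, since Proposition~\ref{prop:minresss} is likewise stated only for type II points.
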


In the following section we will define the sets $\MinResLoc(\cdot)$ and $\Bary(\mu_\varphi)$, as well as the points $\zeta, \zetaG$ in the Berkovich projective line $\pberk$.

\subsection{The Berkovich Projective Line}
The Berkovich projective line over $K$ is formally defined to be the collection \footnote{Technically one considers equivalence classes of seminorms, where two seminorms are equivalent if there is a constant $C>0$ with $[G]_1 = C^d[G]_2$ for all homogeneous polynomials $G\in K[X,Y]$ of degree $d$; see \cite{BakerRumely} Section 2.2.} of multiplicative seminorms $[\cdot]$ on $K[X,Y]$ which extends the absolute value on $K$ and which are non-vanishing on the ideal $(X,Y)$. 

Berkovich has shown (see \cite{Ber}) that each point $\zeta\in \pberk$ is one of four types:
\begin{itemize}
\item Points of Type I correspond to \textit{ evaluation seminorms} given by $[F(X,Y)]_\zeta = |F(a, b)|$ for any point $(a,b)\in \PP^1(K)$. This gives a natural inclusion of $\PP^1(K)$ into $\pberk$.
\item Points of Type II or Type III correspond to \textit{ disc seminorms} given $$[F(X,Y)]_\zeta = \sup_{w\in D(a,r)} |F(w, 1)|\ ;$$ here, $D(a,r)$ is a closed subdisc in $K$. If $r\in |K^\times|$ then $\zeta$ is said to be of Type II, while if $r\not\in |K^\times|$ the point $\zeta$ is said to be of Type III. Such points are denoted $\zeta_{a,r}$. 
\item Points of Type IV can be obtained as limits of disc seminorms: $$[F(X,Y)]_\zeta = \lim_{i\to\infty} [F(X,Y)]_{\zeta_{a_i, r_i}}\ ,$$ where $...D(a_i, r_i)\supseteq D(a_{i+1}, r_{i+1})...$ is a decreasing family of discs for which $\cap_i D(a_i, r_i) = \emptyset$, but $\lim_{i\to\infty} r_i >0$. 
\end{itemize}

As a special case, the point $\zeta_{0,1}$ corresponding to the unit disc is called the Gauss point, and is often denoted $\zetaG$. This name arises from the fact that $\sup_{w\in D(0,1)} |F(w,1)|$ is equal to the maximum of the coefficients of $F$, which is the Gauss norm of $F$. 

\subsubsection{The action of a rational map on $\pberk$}
The action of a rational map $\varphi\in K(z)$ on $\PP^1(K)$ extends naturally to give an action on $\pberk$; as an important case, the automorphism group $\PGL_2(K)$ of $\PP^1(K)$ extends to act on $\pberk$ (\cite{BakerRumely} Corollary 2.13) If $\zeta= \zeta_{a,r}$ is a type II point, then it satisfies $\gamma(\zetaG) =\zeta_{a,r}$, where $\gamma(z) = bz+a$ for any $b\in K^\times$ with $|b| = r$. 

\subsubsection{Tangent Spaces on $\pberk$}
A tangent vector at a point $\zeta\in \pberk$ is an equivalence class of paths emmanating from $\zeta$, where two paths $[\zeta, P], [\zeta, Q]$ are equivalent if they share a common initial segment. The collection of all tangent vectors at $\zeta$ is denoted $T_\zeta$. 

The tangent directions at $\zeta\in \pberk$ are in one-to-one correspondence with the connected components of $\pberk \setminus \{\zeta\}$, and as such we often denote these components by $B_\zeta(\vv)^-$; in \cite{DF}, when $\zeta$ is of type II these sets are called Berkovich discs. When $\zeta$ is of type II, the tangent directions also correspond to elements of $\PP^1(k)$

The map $\varphi$ induces a tangent map $\varphi_*$ on $T_\zeta$, which has the following interpretation at $\zetaG$: assume first that the residue map $\widetilde{\varphi}$ is non-constant; identifying $\tilde{z}\in \PP^1(\overline{k})$ with tangent vectors $\vv_{\tilde{z}}\in T_{\zetaG}$, the tangent map $\varphi_*$ is given by $$\varphi_* \vv_{\tilde{z}} = \vv_{\widetilde{\varphi}(\tilde{z})}\ .$$ The general case -- when $\varphi$ does not have constant residue map, or at type II points other than $\zetaG$ -- can be obtained by pre- or post- composition of $\varphi$.

\subsubsection{Multiplicities}
There are two notions of multiplicity that play an important role in dynamics on $\pberk$; these are the directional multiplicity $m_\varphi(\zeta, \vv)$ and the surplus multiplicity $s_\varphi(\zeta, \vv)$.

For a fixed tangent direction $\vv\in T_{\zetaG}$, the image of the Berkovich disc $B_{\zetaG}(\vv)^-$ under $\varphi$ is either another Berkovich disc $B_{\varphi(\zetaG)}(\varphi_* \vv)^-$, or else is all of $\pberk$ (\cite{BakerRumely} Proposition 9.40). The surplus and directional multiplicities arise from this phenomenon when trying to count preimages: for any $y\in \PP^1(K)$, we have (see \cite{RL} Lemma 2.1, \cite{BakerRumely} Proposition 9.41, or \cite{Faber} Theorem 3.10) $$\#(\varphi^{-1}(y) \cap B_{\zetaG}(\vv_{\tilde{z}})^-) = \left\{\begin{matrix} m_\varphi(\zetaG, \vv_{\tilde{z}}) + s_\varphi(\zetaG, \vv_{\tilde{z}}), & y \in B_{\varphi(\zetaG)}(\varphi_* \vv_z)^-\\ s_\varphi(\zetaG, \vv_{\tilde{z}}), & y\not\in B_{\varphi(\zetaG)}(\varphi_* \vv_z)^-\end{matrix}\right.\ .$$

\subsection{Measures on $\pberk$}

In this section, we recall the construction of the equilibrium measure $\mu_\varphi$ on $\pberk$ attached to a rational map $\varphi\in K(z)$, and use it to induce a measure $\widetilde{\mu_\varphi}$ on $\PP^1(k)$ (endowed with the discrete topology). We then show how a construction of DeMarco can be used to give another measure $\mu_{\varphi_v}$ on $\PP^1(k)$, and we show that $\widetilde{\mu_\varphi} = \mu_{\varphi_v}$. 

The equilibrium measure $\mu_\varphi$ on $\pberk$ was defined (more or less) simultaneously by several different authors (\cite{FRLErgodic}, \cite{BRSmallHt}, \cite{CL}); it can be realized as the weak limit of the normalized pullback $\frac{1}{d^n} (\varphi^n)^* \delta_y$ of a point mass which charges a non-exceptional point $y\in \PP^1(K)$; here, the preimages are weighted according to their multiplicities. The equilibrium measure is the unique $\varphi$-invariant probability measure on $\pberk$ which satisfies $\frac{1}{d}\varphi^* \mu_\varphi = \mu_\varphi$ and which does not charge the exceptional set of $\varphi$. 

The support of the equilibrium measure is called the Julia set of $\varphi$; it is also characterized as the smallest totally invariant closed subset of $\pberk$ disjoint from the exceptional set of $\varphi$. Loosely speaking, it is the locus of points where iterates of $\varphi$ behave chaotically. We will make use of the Julia set in Section~\ref{examples} below.

Following DeMarco and Faber \cite{DF}, we can give an interpretation of the measure $\mu_\varphi$ in terms of the multiplicities introduced above. Let $U= B_{\zeta}(\vv)^-$ be a Berkovich disc, and let $g= \mathds{1}_U$ be the characteristic function of this set. Then for any point $y\in \PP^1(K)$, let $\nu_n =\frac{1}{d^n} (\varphi^n)^*\delta_y$; by the definition of the surplus and directional multiplicities, we find that $$\nu_n (U) = \frac{\epsilon(n, y,U) m_{\varphi^n}(\zeta, \vv) + s_{\varphi^n}(\zeta, \vv)}{d^n}\ ,$$ where $\epsilon(n, y,U)$ is either 0 or 1. By the weak convergence of the $\nu_n$, we have \begin{equation}\label{eq:measuremultiplicities}\mu_\varphi(U) = \lim_{n\to\infty} \nu_n(U) = \lim_{n\to\infty} \frac{\epsilon(n, y,U) m_{\varphi^n}(\zeta, \vv) + s_{\varphi^n}(\zeta, \vv)}{d^n}\ .\end{equation}

We are now ready to define the residue measure\footnote{The residue measure here is a special case of a $\Gamma$-measure introduced by DeMarco and Faber \cite{DF}, in the case that $\Gamma=\{\zetaG\}$.} $\widetilde{\mu_\varphi}$ on $\PP^1(k)$: endow $\PP^1(k)$ with the discrete topology, and let $$\widetilde{\mu_\varphi}(\{z\}) := \mu_\varphi(B_{\zetaG}(\vv_z)^-)\ ,$$ where we recall that $B_{\zetaG}(\vv_z)^-$ is the connected component of $\pberk\setminus \{\zetaG\}$ corresponding to the point $z\in \PP^1(k)\simeq T_{\zetaG}$. If $\varphi$ has good reduction, this measure is the zero measure; otherwise, it is a probability measure on $\PP^1(k)$.

We now recall another probability measure on $\PP^1(k)$ induced by $\varphi_v$, which was first introduced by DeMarco \cite{DeMarco1} in the context of degenerating families of rational maps. Her work was carried out over $\CC$ but the construction holds more generally over any complete algebraically closed field.

As above, let $[a, b]\in \PP^{2d+1}(K)$ denote a normalized set of  coefficients of $\varphi$, and let $\varphi_v$ denote the rational map corresponding to the coefficientwise reduction $[\tilde{a}, \tilde{b}]\in \PP^{2d+1}(\overline{k})$. As a map, we can think of $\varphi_v$ as $\varphi_v = \tilde{A}\cdot \widetilde{\varphi}$. Following DeMarco \cite{DeMarco1}, if the degree of $\widetilde{\varphi}$ is nonzero, let $$\mu_{\varphi_v} := \sum_{n=0}^\infty \frac{1}{d^{n+1}} \sum_{\substack{\tilde{A}(h) = 0 \\ \widetilde{\varphi}^n(w) = h}} \delta_w\ ,$$ which is a probability measure supported at the iterated preimages (under $\widetilde{\varphi}$) of the holes of $\varphi_v$. If the degree of $\widetilde{\varphi}$ is zero, let $\mu_{\varphi_v}$ be the probability measure given $$\mu_{\varphi_v} = \frac{1}{d} \sum_{\tilde{A}(h) = 0} s_\varphi(\zetaG, \vv_h)\cdot  \delta_h\ .$$ The measure $\mu_{\varphi_v}$ can be viewed as the `equilibrium measure' for the degenerate rational map $\widetilde{\varphi}$. DeMarco shows
\begin{lem}[DeMarco, Corollary 2.3, \cite{DeMarco1}]
If $\varphi_v\not\in I(d)$, then $$\mu_{\varphi_v}(\{z\}) = \lim_{n\to\infty} \frac{s_{\varphi^n}(\zeta, \vv_{\tilde{z}})}{d^n}\ .$$
\end{lem}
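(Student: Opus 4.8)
The plan is to reduce the statement to a single recursion for the surplus multiplicities along the forward orbit, obtained by factoring $\varphi^{n+1}=\varphi\circ\varphi^n$. First I would record a \emph{composition formula}: for rational maps $\chi,\varphi$ with $\chi(\zetaG)=\zetaG$ (equivalently $\widetilde\chi$ non-constant) and $\psi=\varphi\circ\chi$,
\[
s_\psi(\zetaG,\vv)=m_\chi(\zetaG,\vv)\cdot s_\varphi(\zetaG,\chi_*\vv)+(\deg\varphi)\cdot s_\chi(\zetaG,\vv),\qquad m_\psi(\zetaG,\vv)=m_\chi(\zetaG,\vv)\cdot m_\varphi(\zetaG,\chi_*\vv).
\]
This follows from the preimage-counting identity recalled above applied fibrewise, $\#(\psi^{-1}(y)\cap B_{\zetaG}(\vv)^-)=\sum_{y'\in\varphi^{-1}(y)}\#(\chi^{-1}(y')\cap B_{\zetaG}(\vv)^-)$: splitting each summand according to whether $y'\in B_{\zetaG}(\chi_*\vv)^-$ turns the right-hand side into $m_\chi(\zetaG,\vv)\,\#(\varphi^{-1}(y)\cap B_{\zetaG}(\chi_*\vv)^-)+(\deg\varphi)\,s_\chi(\zetaG,\vv)$, and applying the same identity once more to the remaining count and comparing coefficients of $\mathbf{1}(y\in B_{\psi(\zetaG)}(\psi_*\vv)^-)$ yields both formulas.

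With $\delta:=\deg\widetilde\varphi$, I would treat the case $\delta\ge 1$ first; here $\varphi$ and each iterate fix $\zetaG$, act on $T_{\zetaG}\cong\PP^1(k)$ as the residue map, and satisfy $(\varphi^k)_*\vv_{\tilde z}=\vv_{\widetilde\varphi^k(\tilde z)}$ and $m_{\varphi^k}(\zetaG,\vv_{\tilde z})=\deg_{\tilde z}(\widetilde\varphi^k)$, and moreover $\varphi_v\notin I(d)$ is automatic. Taking $\chi=\varphi^n$ in the composition formula and unrolling from $s_{\varphi^0}\equiv 0$ gives
\[
\frac{s_{\varphi^n}(\zetaG,\vv_{\tilde z})}{d^n}=\sum_{k=0}^{n-1}\frac{\deg_{\tilde z}(\widetilde\varphi^k)\cdot s_\varphi(\zetaG,\vv_{\widetilde\varphi^k(\tilde z)})}{d^{k+1}}.
\]
The one extra ingredient is the identification of $s_\varphi(\zetaG,\vv_{\tilde w})$ with the depth of the hole of $\varphi_v$ at $\tilde w$ (which is $0$ when $\tilde w$ is not a hole): for a normalized lift $[F,G]$ and a classical $y$ reducing outside a finite bad set, in particular with $\tilde y\ne\widetilde\varphi(\tilde w)$, the fibre $\varphi^{-1}(y)$ is the zero locus of the normalized form $F-yG$, whose reduction $\tilde F-\tilde y\tilde G=\tilde A\,(\tilde F_0-\tilde y\tilde G_0)$ vanishes at $\tilde w$ to order exactly $\ord_{\tilde w}\tilde A$; since $B_{\zetaG}(\vv_{\tilde w})^-$ is the set of classical points reducing to $\tilde w$ and $y\notin B_{\varphi(\zetaG)}(\varphi_*\vv_{\tilde w})^-$, the surplus formula identifies that order with $s_\varphi(\zetaG,\vv_{\tilde w})$. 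Substituting, the series is dominated by $\tfrac{d-\delta}{d}\sum_k(\delta/d)^k$, so the limit exists, and term by term it is DeMarco's expansion: the coefficient of $\delta_z$ in $\sum_{\tilde A(h)=0,\ \widetilde\varphi^n(w)=h}\delta_w$ is precisely $\deg_{\tilde z}(\widetilde\varphi^n)$ times the depth of the hole at $\widetilde\varphi^n(\tilde z)$, so $\lim_n s_{\varphi^n}(\zetaG,\vv_{\tilde z})/d^n=\mu_{\varphi_v}(\{z\})$. This settles $\delta\ge 1$.

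The case $\delta=0$ is where $\varphi_v\notin I(d)$ is genuinely used, and it is the step I expect to be the main obstacle. Now $\widetilde\varphi$ is the constant $c$, and $\varphi_v\notin I(d)$ says $c$ is not a hole; hence $\varphi(\zetaG)\ne\zetaG$ and there is no static recursion. The composition formula still yields
\[
\frac{s_{\varphi^n}(\zetaG,\vv_{\tilde z})}{d^n}=\sum_{k=0}^{n-1}d^{-(k+1)}\,m_{\varphi^k}(\zetaG,\vv_{\tilde z})\cdot s_\varphi\bigl(\varphi^k(\zetaG),(\varphi^k)_*\vv_{\tilde z}\bigr),
\]
so everything reduces to showing that the terms with $k\ge 1$ contribute nothing in the limit, leaving $\lim_n s_{\varphi^n}(\zetaG,\vv_z)/d^n=\tfrac1d s_\varphi(\zetaG,\vv_z)$, which matches $\mu_{\varphi_v}(\{z\})=\tfrac1d\sum_{\tilde A(h)=0}s_\varphi(\zetaG,\vv_h)\,\delta_h(\{z\})$. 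The mechanism should be that, because $c$ is not a hole, $\varphi_v$ fixes $c$ and $\varphi$ is ``good'' near $c$, so the forward orbit of $\zetaG$ is pushed into a region where $\varphi$ carries no surplus and none is created past the first step; if $c$ were itself a hole (the $I(d)$ case) this collapse fails, which is exactly why $I(d)$ is the obstruction. Making ``$\varphi$ is good after recentering at $c$'' precise, controlling $s_\varphi$ and $m_{\varphi^k}$ at the moving type II points $\varphi^k(\zetaG)$, and pinning down $\varphi_v\notin I(d)$ as the exact dividing line, is the delicate part of the argument.
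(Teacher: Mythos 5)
The paper itself does not prove this lemma (it is quoted from DeMarco's Corollary 2.3), so the only question is whether your argument stands on its own. Your composition formula for $m$ and $s$ is correct, and your treatment of the case $\deg\widetilde\varphi\geq 1$ (where $\varphi$ fixes $\zetaG$, the tangent map is $\widetilde\varphi$, the surplus in direction $\vv_{\tilde w}$ equals the depth of the hole at $\tilde w$, and the unrolled series matches DeMarco's expansion and is dominated by a geometric series) is a sound non-archimedean replacement for DeMarco's computation. But in that case $\varphi_v\notin I(d)$ is automatic, so all of the content of the lemma sits in the constant-reduction case, which you leave as a heuristic --- and the heuristic is false.

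Concretely, take $\varphi(z)=z^2/(z^2+p)$ over $\CC_p$ with $p$ odd: the reduction is the constant $1$ with a single hole at $0$ of depth $2$, so $\varphi_v\notin I(2)$. For $|z|<1$ one has $|\varphi(z)-1|=|p|/|z^2+p|>|p|$, so the image of $B_{\zetaG}(\vv_0)^-$ is the component of $\pberk\setminus\{\zeta_{1,|p|}\}$ containing $\zetaG$ and misses $\overline{B}(1,|p|)$ entirely; hence $s_\varphi(\zetaG,\vv_0)=0$ even though the hole at $0$ has depth $2$. Thus the identification ``surplus $=$ hole depth,'' which you established only for non-constant reduction, genuinely fails here, and so does your proposed endgame: a direct count gives $s_{\varphi^2}(\zetaG,\vv_0)=2$ (for $y$ near $1-p$ one of the two square-root branches escapes the unit ball and the other contributes two preimages), and since your own recursion shows $s_{\varphi^n}(\zetaG,\vv_0)/d^n$ is non-decreasing, the limit is at least $1/2$ --- in fact it equals $1$, the depth divided by $d$ --- whereas your claimed value $\tfrac1d s_\varphi(\zetaG,\vv_0)$ is $0$. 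In other words the $k\geq 1$ terms in your expansion do not vanish (here $s_\varphi(\varphi(\zetaG),\varphi_*\vv_0)=1$ and those terms carry all of the mass), so the mechanism ``no surplus along the forward orbit of $\zetaG$'' cannot be made to work. The correct mechanism in this case is DeMarco's coefficient computation: writing $\tilde F=\alpha\tilde A$, $\tilde G=\beta\tilde A$ with $c=[\alpha:\beta]$, the reduction of the composed lift of $\varphi^{n}$ is $\tilde A(\alpha,\beta)^{\ast}\,\tilde A^{d^{\,n-1}}\cdot(\alpha:\beta)$, which is nonzero exactly because $c$ is not a hole; hence the holes of $\varphi^n$ are the holes of $\varphi$ with depths multiplied by $d^{\,n-1}$, and one then compares the surplus of $\varphi^n$ with the depth of its holes asymptotically. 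Note that this also means the limit is $\mathrm{depth}_\varphi(\tilde z)/d$ rather than $s_\varphi(\zetaG,\vv_{\tilde z})/d$: in the degree-zero case the measure must be read with hole depths, as in DeMarco, and the surplus-versus-depth conflation is precisely the trap your sketch (and the displayed definition you are matching against) falls into.
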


The next proposition captures the relationship between $\widetilde{\mu_\varphi}$ and $\mu_{\varphi_v}$; note that in the case $K=\CC((t))$ is the field of Puiseux series, this result was proved in \cite{DF} Theorem B using different methods:

\begin{prop}\label{prop:measuresagree}
Let $\varphi\in K(z)$, and suppose that $\varphi$ does not have good reduction. Let $\widetilde{\mu_\varphi}, \mu_{\varphi_v}$ be the measures on $\PP^1(k)$ defined above. Then $\widetilde{\mu_\varphi} = \mu_{\varphi_v}$ as measures on $\PP^1(k)$. 
\end{prop}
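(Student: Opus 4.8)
The plan is to establish the equality $\widetilde{\mu_\varphi} = \mu_{\varphi_v}$ pointwise on $\PP^1(k)$, splitting into the two cases dictated by the definition of $\mu_{\varphi_v}$: the case where the residue map $\widetilde{\varphi}$ has positive degree, and the case where $\widetilde{\varphi}$ is constant. In both cases the bridge between the two measures will be the multiplicity formula \eqref{eq:measuremultiplicities}, which expresses $\mu_\varphi(B_{\zetaG}(\vv_z)^-)$ as the limit of $\frac{1}{d^n}\bigl(\epsilon(n,y,U)\, m_{\varphi^n}(\zetaG,\vv_z) + s_{\varphi^n}(\zetaG,\vv_z)\bigr)$. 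The key observation is that the directional multiplicity $m_{\varphi^n}(\zetaG, \vv_z)$, being at most $d^n$ and governed by how the residue map acts on tangent directions, contributes nothing in the limit precisely when the orbit under the tangent map $(\widetilde{\varphi})_*$ does not stay trapped; meanwhile the surplus multiplicity encodes the ``lost degree'' coming from the holes of $\varphi_v$, which is exactly what DeMarco's measure $\mu_{\varphi_v}$ bookkeeps.

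First I would dispose of the case $\deg \widetilde{\varphi} \geq 1$ with $\varphi_v \notin I(d)$. Here DeMarco's Lemma (Corollary 2.3 of \cite{DeMarco1}, quoted above) gives $\mu_{\varphi_v}(\{z\}) = \lim_{n\to\infty} s_{\varphi^n}(\zetaG, \vv_{\tilde z})/d^n$ directly. So it suffices to show the directional term washes out, i.e. $\lim_{n\to\infty} \epsilon(n,y,U)\, m_{\varphi^n}(\zetaG, \vv_{\tilde z})/d^n = 0$. Since $\varphi$ does not have good reduction but $\widetilde{\varphi}$ is nonconstant, the reduction $\varphi_v$ is degenerate, so $\deg\widetilde\varphi = d - (\deg \tilde A) < d$; iterating, one checks that the directional multiplicity of $\varphi^n$ at $\zetaG$ in any direction is bounded by $(\deg\widetilde\varphi)^n$ up to the combinatorics of how the non-hole part composes — more carefully, $m_{\varphi^n}(\zetaG,\vv_{\tilde z})$ is the local degree of $\widetilde{\varphi}^n$ along the tangent orbit, which grows at most like $(\deg\widetilde{\varphi})^n = o(d^n)$. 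Dividing by $d^n$ and passing to the limit kills this term, and we conclude $\widetilde{\mu_\varphi}(\{z\}) = \mu_{\varphi_v}(\{z\})$. One must also handle $z \notin I(d)$ but possibly in the support: the argument is uniform in $z$.

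Second, the case $\deg \widetilde\varphi = 0$: now $\varphi_v = \tilde A \cdot c$ for a constant $c \in \PP^1(k)$, the tangent map $(\widetilde\varphi)_*$ collapses everything, and by definition $\mu_{\varphi_v} = \frac{1}{d}\sum_{\tilde A(h)=0} s_\varphi(\zetaG, \vv_h)\,\delta_h$. I would show directly from \eqref{eq:measuremultiplicities} that $\mu_\varphi(B_{\zetaG}(\vv_z)^-) = \frac{1}{d} s_\varphi(\zetaG,\vv_z)$ for every $z$: since $\widetilde\varphi$ is constant, $\varphi(B_{\zetaG}(\vv_z)^-)$ is a Berkovich disc around $\varphi(\zetaG)$ for every $z$ (the image can only fail to be a disc in directions where the residue map is nonconstant), so the directional multiplicity $m_{\varphi}(\zetaG,\vv_z)$ is absorbed into a single ``good'' direction, and one computes $\mu_\varphi$ of a disc by pulling back $\delta_y$ once and using self-similarity/invariance $\frac1d\varphi^*\mu_\varphi = \mu_\varphi$ together with the fact that $\mu_\varphi$ is supported where the surplus lives. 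A cleaner route: use invariance $\mu_\varphi = \frac1d \varphi^*\mu_\varphi$ at the level of the disc $B_{\zetaG}(\vv_z)^-$, note $\mu_\varphi(\pberk) = 1$ and that the total surplus $\sum_z s_\varphi(\zetaG,\vv_z) = d - m$ where $m$ is the degree in the ``escaping'' direction, and reconcile with the hole structure; the point masses at the holes $h$ of $\varphi_v$ with weight proportional to $s_\varphi(\zetaG, \vv_h)$ then match exactly.

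The main obstacle I anticipate is the bookkeeping in the nonconstant-residue case: making rigorous that $m_{\varphi^n}(\zetaG, \vv_{\tilde z})/d^n \to 0$ requires a clean statement about how directional multiplicities multiply under composition (essentially the chain rule for local degrees on $\pberk$ at type II points) together with the strict inequality $\deg\widetilde\varphi < d$, which holds because $\varphi$ has bad reduction. A secondary subtlety is the role of the hypothesis $\varphi_v \notin I(d)$ — it is needed to invoke DeMarco's Lemma, and one should note that $I(d)$ points are precisely those excluded by the bad-reduction hypothesis combined with the ``hole equals constant value'' condition, so for $\deg \widetilde\varphi \geq 1$ and $\deg\widetilde\varphi = 0$ both the relevant regime is covered once good reduction is excluded. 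I would also double check the edge behavior of $\epsilon(n,y,U) \in \{0,1\}$ does not interfere, which it cannot since it only ever kills a term.
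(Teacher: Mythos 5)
Your treatment of the nonconstant–residue case is a legitimate alternative to the paper's argument: there $\varphi(\zetaG)=\zetaG$, the chain rule for directional multiplicities gives $m_{\varphi^n}(\zetaG,\vv_{\tilde z})\le(\deg\widetilde\varphi)^n$, and bad reduction forces $\deg\widetilde\varphi<d$, so the directional term is $o(d^n)$ and only the surplus survives, matching DeMarco's Corollary 2.3. The paper avoids this bookkeeping entirely: from (\ref{eq:measuremultiplicities}) it only extracts the one-sided bound $\widetilde{\mu_\varphi}(\{z\})\ge\lim_n s_{\varphi^n}(\zetaG,\vv_z)/d^n=\mu_{\varphi_v}(\{z\})$ and then concludes equality because both are probability measures on $\PP^1(k)$ --- a trick that would also spare you the $o(d^n)$ estimate.

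The genuine gap is in your constant-residue case, which is exactly the half of the proposition that needs new input. First, the parenthetical justification is backwards: the image of $B_{\zetaG}(\vv_z)^-$ fails to be a Berkovich disc precisely in directions of positive surplus, i.e.\ at the holes of $\varphi_v$, and a constant residue map necessarily has holes; so ``$\varphi(B_{\zetaG}(\vv_z)^-)$ is a disc for every $z$'' is false. Second, the asserted identity $\mu_\varphi(B_{\zetaG}(\vv_z)^-)=\tfrac1d s_\varphi(\zetaG,\vv_z)$ cannot be read off from (\ref{eq:measuremultiplicities}), which involves $s_{\varphi^n}$ and $m_{\varphi^n}$, not $s_\varphi$; relating the two is the whole difficulty, and when $\varphi_v\in I(d)$ the reductions of the iterates are not controlled by $\varphi_v$ (DeMarco's Corollary 2.3 is unavailable), so ``pull back once and use invariance'' does not compute the limit measure. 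The paper supplies the missing ingredient via the $\Gamma$-measure pullback relation of DeMarco--Faber (Lemmas 4.4 and 4.8 of \cite{DF}), which yields the inequality $\mu_\varphi(B_{\zetaG}(\vv_z)^-)\ge s_\varphi(\zetaG,\vv_z)/d$, and then again invokes the probability-measure trick; your sketch contains no substitute for this step. Finally, note that your case split by $\deg\widetilde\varphi$ misplaces the maps with constant residue but $\varphi_v\notin I(d)$: they land in your defective second case, although they could be handled exactly as in the paper's first case, since DeMarco's lemma requires only $\varphi_v\notin I(d)$, not nonconstancy of $\widetilde\varphi$.
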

\begin{proof}
We will work with two cases. First, assume that $\varphi_v \not\in I(d)$. Then by the previous lemma and  (\ref{eq:measuremultiplicities}) above, we find 
\begin{align*}
\widetilde{\mu_\varphi}(\{z\}) & = \mu_\varphi(B_{\zetaG}(\vv_z)^-)\\
& = \lim_{n\to\infty} \frac{\epsilon(n,y,U) m_{\varphi^n}(\zetaG, \vv_z) + s_{\varphi^n}(\zetaG, \vv_z)}{d^n}\\
& \geq \lim_{n\to\infty} \frac{s_{\varphi^n}(\zetaG, \vv_z)}{d^n} = \mu_{\varphi_v}(\{z\})\ .
\end{align*} But both $\widetilde{\mu_\varphi}, \mu_{\varphi_v}$ are probability measures on $\PP^1(\overline{k})$; since $\widetilde{\mu_\varphi} \geq \mu_{\varphi_v}$, they must be equal.

Now assume that $\varphi_v\in I(d)$; in particular, $\varphi$ has constant residue map $\tilde{\varphi} \equiv \tilde{c}$ for some $\tilde{c}\in \PP^1(k)$. We claim that $\widetilde{\mu_\varphi} \geq \mu_{\varphi_v}$ in this case as well. Assume instead that $\mu_{\varphi_v}> \widetilde{\mu_\varphi}$; then we can find a point $\tilde{z}_1\in \PP^1(k)$ with
\begin{align*}
\mu_{\varphi_v}(\{\tilde{z}_1\})& > \widetilde{\mu_\varphi}(\{\tilde{z}_1\})\ .
\end{align*} We now invoke Lemma 4.8 of \cite{DF}: let $\Gamma = \{\zetaG\}$ and $\Gamma' = \{\zetaG, \varphi(\zetaG)\}$, so that  $U=B_{\zetaG}(\{z_1\})^-$ is a $\Gamma$-disc. The measure $\mu_\varphi$, viewed as a $\Gamma'$ measure, satisfies the pullback relation in Lemma 4.8 of \cite{DF} (see Lemma 4.4, ibid), and we conclude that $$\widetilde{\mu_\varphi}(\{\tilde{z}_1\}) = \mu_\varphi(B_{\zetaG}(\vv_{z_1})^-) \geq \frac{s_{\varphi}(\zetaG, \vv_{z_1})}{d}\ .$$ But since $\tilde{\varphi}$ is constant, we have that $\mu_{\varphi_v}(\{\tilde{z}_1\}) = \frac{s_\varphi(\zetaG, \vv_{z_1})}{d} $. All together, we have $$\mu_{\varphi_v}(\{\tilde{z}_1\}) > \widetilde{\mu_\varphi}(\{\tilde{z}_1\}) \geq \frac{s_\varphi(\zetaG, \vv_{z_1})}{d} = \mu_{\varphi_v}(\{\tilde{z}_1\})\ ,$$ whichi is a contradiction. Therefore, $\widetilde{\mu_\varphi} \geq \mu_{\varphi_v}$; since both are probability measures, they must be equal.
\end{proof}

\subsubsection{Barycenters of Measures}
Let $\nu$ be a Borel probability measure on $\pberk$. The \textit{ barycenter} of $\nu$ is defined to be $$\Bary(\nu) = \{\zeta\in \pberk \ | \ \nu(B_\zeta(\vv)^-) \leq \frac{1}{2}\ \textrm{ for all }\vv\in T_\zeta\}\ .$$ This set is always non-empty, and is either a point or a segment in $\hberk$ (this fact is originally due to Rivera-Letelier; see \cite{KJThesis} Proposition 4.4 for a proof). We are most interested in the case that $\nu = \mu_\varphi$.

\subsection{The Minimal Resultant Locus}
The last tool we need in proving Corollary~\ref{cor:Berkcorollary} is the minimal resultant locus cooresponding to the map $\varphi$, which is denoted by $\MinResLoc(\varphi)$. This object was first introduced by Rumely (see \cite{Ru1}, \cite{Rumely}) in his study of minimal models of rational maps.

Let $[F,G]$ be a normalized homogeneous lift of $\varphi$, and define $$\ordRes(\varphi) = -\log_v |\Res(F,G)|\ .$$ This quantity $\SL_2(\mathcal{O}_K)$-invariant, in the sense that $\ordRes(\varphi) = \ordRes(\varphi^\gamma)$ for any $\gamma\in \SL_2(\mathcal{O}_K)$. Since $\SL_2(\mathcal{O}_K)$ is the stabilizer of $\zetaG$ in $\SL_2(K)$ (\cite{Ru1} Proposition 1.1), one obtains a well-defined function on type II points by $$\ordRes_\varphi(\zeta) = \ordRes(\varphi^\gamma)\ ,$$ where $\zeta = \gamma(\zetaG)$. Rumely shows that it extends to a well-defined, continuous, convex-up function on all of $\pberk$ (\cite{Ru1} Thoerem 0.1). The set of points where this function attains its minimum is denoted $\MinResLoc(\varphi)$. By Proposition \ref{semistableimpliesminimal}, points $\zeta$ corresponding to conjugates realizing semistable reduction all lie in $\MinResLoc(\varphi)$; Rumely extended this to show that the converse is also true:

\begin{prop}[Rumely, \cite{Rumely}, Theorem 7.4]\label{prop:minresss}
Suppose $\varphi(z)\in K(z)$ has degree $d\geq 2$. Let $P\in \hberk$ be a point of type II, and let $\gamma\in \GL_2(K)$ be such that $P = \gamma(\zetaG)$. Then
\begin{itemize}
\item $($A$)$ $P$ belongs to $\MinResLoc(\varphi)$ if and only if $\varphi^\gamma$ has semistable reduction.
\item $($B$)$ If $P$ belongs to $\MinResLoc(\varphi)$, then $P$ is the unique point in $\MinResLoc(\varphi)$ if and only if $\varphi^\gamma$ has stable reduction. 
\end{itemize}
\end{prop}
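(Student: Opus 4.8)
The plan is to reduce both parts to a computation of the one‑sided directional derivatives of Rumely's function $\ordRes_\varphi$ at the type II point $P$. Recall that for a type II point $Q=\gamma'(\zetaG)$ one has $\ordRes_\varphi(Q)=\ord(\Res(F',G'))$ for a normalized lift $[F',G']$ of $\varphi^{\gamma'}$, that $R_{[\varphi]}=\min_Q \ordRes_\varphi(Q)$, and that (by the cited work of Rumely) $\ordRes_\varphi$ is continuous, piecewise affine, and convex up on $\pberk$. Two elementary consequences of convexity and piecewise affineness drive everything: (i) $P\in\MinResLoc(\varphi)$ if and only if every outgoing directional slope of $\ordRes_\varphi$ at $P$ is $\geq 0$ (a strictly negative slope would exhibit a nearby point of smaller value); and (ii) assuming $P\in\MinResLoc(\varphi)$, $P$ is the \emph{unique} point of $\MinResLoc(\varphi)$ if and only if every such slope is strictly positive (a zero slope persists along an entire initial affine segment, placing that whole segment in $\MinResLoc(\varphi)$). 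So it suffices to compute these slopes and match them with the numerical criterion.

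For the slope computation, fix an outgoing direction at $P$; the directions at the type II point $P$ correspond via $\gamma$ to the points of $\PP^1(k)$, and after replacing $\gamma$ by $\gamma\sigma$ for a suitable $\sigma\in\SL_2(\mathcal{O})$ — which changes neither $P$ nor $\ordRes_\varphi$ and only conjugates the residue map by $\PGL_2(k)$ — we may assume the chosen direction points toward $\tilde z=[0:1]$. Write $\psi:=\varphi^\gamma$, let $\widetilde\psi$ and $\psi_v$ be its residue map and reduction, fix a normalized lift $\psi=[F,G]$ with $F=\sum_i a_iX^iY^{d-i}$, $G=\sum_i b_iX^iY^{d-i}$, and put $\delta_s=\diag(\pi^s,1)$, so $s\mapsto\gamma\delta_s(\zetaG)$ traces the geodesic from $P$ toward $\tilde z$ as $s$ increases from $0$. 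Conjugating $[F,G]$ by $\delta_s$ sends $a_i\mapsto \pi^{s(i-1)}a_i$ and $b_i\mapsto\pi^{si}b_i$; since the resultant of two degree‑$d$ binary forms is isobaric of weight $d^2$ for the grading $\mathrm{wt}(a_i)=\mathrm{wt}(b_i)=i$, it is multiplied by $\pi^{s\,d(d-1)}$; and renormalization then divides all coefficients by $\pi^{m(s)}$ with $m(s)=\min_{i,j}\bigl(\ord(a_i)+(i-1)s,\ \ord(b_j)+js\bigr)$, scaling the resultant by $\pi^{-2d\,m(s)}$. Hence $\ordRes_\varphi(\gamma\delta_s(\zetaG))=R_\psi+d(d-1)s-2d\,m(s)$; as $m(0)=0$ and $m$ is concave and piecewise linear in $s$, the right derivative at $s=0$ is $d(d-1)-2d\,m'(0^+)$ with $m'(0^+)=\min\bigl(\ord_X(\tilde F)-1,\ \ord_X(\tilde G)\bigr)$, where $\ord_X$ denotes the largest power of $X$ dividing a binary form. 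Writing $r$ for the depth of the hole of $\psi_v$ at $\tilde z$ (so $r=\min(\ord_X\tilde F,\ord_X\tilde G)$, possibly $0$), and noting $\widetilde\psi(\tilde z)=\tilde z$ exactly when $\ord_X\tilde F>\ord_X\tilde G$, this rearranges to
\[
\text{(slope of $\ordRes_\varphi$ at $P$ toward $\tilde z$)}\;=\;\begin{cases} d\,(d+1-2r) & \text{if } \widetilde\psi(\tilde z)\neq \tilde z,\\[2pt] d\,(d-1-2r) & \text{if } \widetilde\psi(\tilde z)=\tilde z.\end{cases}
\]
An identical computation along the geodesic toward $[1:0]$ gives the same formula, so every direction at $P$ is covered; for $\tilde z$ neither a hole nor a fixed point of $\widetilde\psi$ — the generic case, hence all but finitely many directions — the slope is $d(d+1)>0$.

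With the slope formula, part (A) follows from Proposition~\ref{numericalcriterion}: ``all outgoing slopes $\geq 0$'' says exactly that every hole $h$ of $\psi_v$ has depth $\leq d/2$ (for $d$ even) resp. $\leq (d+1)/2$ (for $d$ odd), with any hole of extremal depth satisfying $\widetilde\psi(h)\neq h$, which is precisely Silverman's numerical criterion for $(\varphi^\gamma)_v$ to be semistable. (The implication ``$\varphi^\gamma$ semistable $\Rightarrow P\in\MinResLoc(\varphi)$'' is also immediate from Proposition~\ref{semistableimpliesminimal}, independent of the slope computation.) For part (B), combining consequence (ii) with the slope formula shows $P$ is the unique point of $\MinResLoc(\varphi)$ iff every outgoing slope is strictly positive, i.e. the same depth conditions with each inequality made strict — equivalently, the numerical criterion for $(\varphi^\gamma)_v$ to be GIT‑stable. (For $d$ even no direction has slope exactly $0$, recovering that semistable and stable coincide there; for $d$ odd the strictly semistable cases are exactly a hole of depth $(d+1)/2$ not fixed by $\widetilde\psi$, or a hole of depth $(d-1)/2$ fixed by $\widetilde\psi$.)

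The main obstacle is the slope computation together with the edge‑case bookkeeping: getting right the exponent $d(d-1)$ in the resultant's transformation and the asymmetric term $\ord_X\tilde F-1$ in $m'(0^+)$, and then tracking exactly when $\widetilde\psi(h)=h$, since this fixed‑point condition is precisely what separates semistable from unstable at the extremal depth (and semistable from stable one step below it). One must also check that the one‑parameter subgroups relevant to the Hilbert–Mumford criterion are exactly the conjugations $\delta_s$ realized as geodesics out of $P$, so that the numerical criterion and the directional‑derivative criterion are testing the same thing. This is Rumely's Theorem 7.4; the argument sketched here is a streamlined version of his.
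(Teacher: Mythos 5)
The paper does not actually prove this proposition --- it is quoted from Rumely (\cite{Rumely}, Theorem 7.4) with no internal argument --- and your sketch reconstructs essentially Rumely's own method: reduce membership/uniqueness in $\MinResLoc(\varphi)$ to signs of the one-sided slopes of the convex, piecewise-affine function $\ordRes_\varphi$, compute those slopes explicitly, and match them against the Hilbert--Mumford numerical criterion. Your computation checks out in the details that matter (the transformation $a_i\mapsto\pi^{s(i-1)}a_i$, $b_i\mapsto\pi^{si}b_i$, the exponent $d(d-1)$, the normalization term $-2d\,m(s)$ with $m'(0^+)=\min(\ord_X\tilde F-1,\ \ord_X\tilde G)$, the equivalence $\widetilde\psi(\tilde z)=\tilde z\iff \ord_X\tilde F>\ord_X\tilde G$, and the resulting slopes $d(d+1-2r)$ resp.\ $d(d-1-2r)$, which reproduce the semistable/stable depth conditions and the coincidence of the two for $d$ even), so the proposal is correct and takes the same route as the cited source.
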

 \noindent This result is complementary to Proposition~\ref{semistableimpliesminimal} above.

We will also make use of a result from the first author's thesis concerning the asymptotic behaviour of the sets $\MinResLoc(\varphi^n)$. The metric $\sigma$ given here is the big metric on the Berkovich hyperbolic line $\hberk$; see \cite{BakerRumely} Chapter 2 (they denote the metric $\rho$; we've chosen $\sigma$ here, as $\rho$ conflicts with our notation for the resultant form). 

\begin{prop}\label{prop:convergenceofMinResLoc}[\cite{KJThesis}, Proposition 4.1]
Let $\varphi\in K(z)$ be a rational map of degree $d\geq 2$. For any $\epsilon >0$, there exists an $\hat{N}$ such that for every $n\geq \hat{N}$ we have $$\MinResLoc(\varphi^n) \subseteq \mathcal{B}_{\epsilon}(\Bary(\mu_\varphi))\ ,$$ where $\mathcal{B}_{\epsilon}(A) = \{ \zeta\in \hberk \ : \ \min_{x\in A}\  \sigma(x,\zeta) < \epsilon\}$. 
\end{prop}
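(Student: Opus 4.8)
The plan is to convert the condition ``$\zeta\in\MinResLoc(\varphi^n)$'' into an inequality on $\mu_\varphi$-masses of Berkovich discs and then let $n\to\infty$. Fix a type II point $\zeta=\gamma(\zetaG)$. By Rumely's Proposition~\ref{prop:minresss}, $\zeta\in\MinResLoc(\varphi^n)$ if and only if $(\varphi^n)^\gamma=(\varphi^\gamma)^n$ has semistable reduction, and by the numerical criterion (Proposition~\ref{numericalcriterion}) this forces every hole of the reduction $\big((\varphi^\gamma)^n\big)_v$ to have depth at most $\lceil d^n/2\rceil$. Under the identification $T_{\zetaG}\simeq\PP^1(k)$ the depth of the hole in a direction $\vw$ is the surplus multiplicity $s_{(\varphi^\gamma)^n}(\zetaG,\vw)$, so semistability of $(\varphi^\gamma)^n$ is the assertion that $s_{(\varphi^\gamma)^n}(\zetaG,\vw)\le\lceil d^n/2\rceil$ for every $\vw$. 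Dividing by $d^n$ and applying DeMarco's asymptotic (Corollary 2.3 of \cite{DeMarco1}) together with the identification $\widetilde{\mu_{\varphi^\gamma}}=\mu_{(\varphi^\gamma)_v}$ of Proposition~\ref{prop:measuresagree}, one obtains that $\zeta\in\MinResLoc(\varphi^n)$ for arbitrarily large $n$ forces, in every direction $\vv=\gamma_*\vw$ at $\zeta$,
\[
\mu_\varphi\big(B_\zeta(\vv)^-\big)=\mu_{\varphi^\gamma}\big(B_{\zetaG}(\vw)^-\big)=\lim_{n\to\infty}\frac{s_{(\varphi^\gamma)^n}(\zetaG,\vw)}{d^n}\le\frac12,
\]
i.e. $\zeta\in\Bary(\mu_\varphi)$. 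The work is to upgrade this to the stated uniform statement.

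I would assemble two quantitative inputs. First, a rate for the surplus asymptotic that is uniform in $\gamma$: a sequence $c(n)\to0$, not depending on $\gamma$, such that whenever $(\varphi^n)^\gamma$ is semistable one has $\mu_\varphi(B_\zeta(\vv)^-)\le\tfrac12+c(n)$ for all $\vv\in T_\zeta$. Since conjugating by $\gamma$ only relabels holes and tangent directions, this reduces to a statement uniform over all degree-$d$ maps $\psi$ with $\psi_v\notin I(d)$: the $M$-th truncation of the defining series for $\mu_{\psi_v}$ leaves a geometric tail of mass $O\big((1-1/d)^M\big)$ independent of $\psi$, and $s_{\psi^M}(\zetaG,\cdot)/d^M$ matches that truncation up to a comparable uniform error; combined with $\lceil d^n/2\rceil/d^n-\tfrac12=O(d^{-n})$ this gives the claim. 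Second, a geometric modulus for the barycenter: for every $\epsilon>0$ there is $\delta(\epsilon)>0$ such that any $\zeta\in\hberk$ with $\sigma(\zeta,\Bary(\mu_\varphi))\ge\epsilon$ carries some $\vv\in T_\zeta$ with $\mu_\varphi(B_\zeta(\vv)^-)\ge\tfrac12+\delta(\epsilon)$; this follows from lower semicontinuity and convexity-up of $\zeta\mapsto\max_{\vv}\mu_\varphi(B_\zeta(\vv)^-)$ along geodesics aimed at $\Bary(\mu_\varphi)$, with the noncompact ``outward'' directions treated separately since there the mass of the disc pointing back toward $\supp(\mu_\varphi)$ tends to $\mu_\varphi(\pberk)=1$. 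Choosing $\hat N$ with $c(\hat N)<\delta(\epsilon)$ then finishes: for $n\ge\hat N$, any $\zeta$ in $\MinResLoc(\varphi^n)$ satisfies $\mu_\varphi(B_\zeta(\vv)^-)\le\tfrac12+c(n)<\tfrac12+\delta(\epsilon)$ in all directions, hence $\sigma(\zeta,\Bary(\mu_\varphi))<\epsilon$.

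The step I expect to be the main obstacle is the uniformity in $\gamma$ of the first input, and inside it the degenerate case $(\varphi^\gamma)_v\in I(d)$, where the series for $\mu_{(\varphi^\gamma)_v}$ no longer computes the asymptotic hole depths. I would handle this case by showing it is eventually vacuous: if $(\varphi^\gamma)_v\in I(d)$ then $\varphi^\gamma$ has constant residue map whose value $\tilde c$ is itself a hole, and this feedback makes $s_{(\varphi^\gamma)^n}(\zetaG,\vv_{\tilde c})/d^n$ exceed $\tfrac12$ for all large $n$ (this is precisely why iteration ``blows up'' on $I(d)$), so $(\varphi^\gamma)^n$ violates the numerical criterion and $\zeta$ drops out of $\MinResLoc(\varphi^n)$ for $n\ge\hat N$; making this quantitative and uniform is the delicate point. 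A cleaner but more machinery-heavy alternative to the whole argument would be to show that the normalized convex functions $\ordRes_{\varphi^n}/N$ converge uniformly on compacta of $\hberk$ to a convex function with minimum locus exactly $\Bary(\mu_\varphi)$ and then invoke stability of $\mathrm{argmin}$ sets under such convergence; I expect the two routes to require comparable effort.
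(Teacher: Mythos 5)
Your overall reduction (Rumely's Proposition~\ref{prop:minresss} to translate $\zeta\in\MinResLoc(\varphi^n)$ into semistability of $(\varphi^\gamma)^n$, the numerical criterion to bound hole depths, the identification of hole depth with surplus multiplicity, and DeMarco's asymptotic plus Proposition~\ref{prop:measuresagree} to pass to $\mu_\varphi$-masses) is sound, and it does yield the \emph{pointwise} statement: a fixed $\zeta$ lying in $\MinResLoc(\varphi^n)$ for infinitely many $n$ must lie in $\Bary(\mu_\varphi)$. But the proposition is precisely a uniform statement about the \emph{moving} sets $\MinResLoc(\varphi^n)$, and the two quantitative inputs that would supply the uniformity are asserted rather than proved. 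For the first input, what you actually need is the one-sided, $\gamma$-uniform estimate $\mu_{\varphi^\gamma}\bigl(B_{\zetaG}(\vw)^-\bigr)\le s_{(\varphi^\gamma)^n}(\zetaG,\vw)/d^n + c(n)$; since $s_{\psi^n}(\zetaG,\vw)/d^n$ increases to the measure, this is exactly a uniform rate of convergence in DeMarco's Corollary 2.3, and your justification (``$s_{\psi^M}/d^M$ matches the $M$-th truncation of the series for $\mu_{\psi_v}$ up to a comparable uniform error'') is the real content of that step and is nowhere established. Worse, the case $(\varphi^\gamma)_v\in I(d)$ — which you correctly flag as the delicate point — is handled by the unproven claim that such conjugates are eventually expelled from $\MinResLoc(\varphi^n)$, uniformly in $\gamma$. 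The proposed mechanism cannot work as stated: when $(\varphi^\gamma)_v\in I(d)$ the reduction of $(\varphi^\gamma)^n$ is \emph{not} determined by the residue data of $\varphi^\gamma$ (this is exactly what indeterminacy of $\Gamma_n$ means, and why Proposition~\ref{minimaliterate} requires $\varphi_v\notin I(d)$), so no argument carried out purely at the level of $\widetilde{\varphi^\gamma}$ and its holes can force $s_{(\varphi^\gamma)^n}(\zetaG,\vv_{\tilde c})/d^n>\tfrac12$, let alone with a threshold $\hat N$ independent of $\gamma$. Since the points of $\MinResLoc(\varphi^n)$ vary with $n$, no fixed-$\zeta$ argument substitutes for this uniformity, so the proof as written has a genuine gap at its crux. (Your second input — that $\sigma(\zeta,\Bary(\mu_\varphi))\ge\epsilon$ forces a direction of mass $\ge\tfrac12+\delta(\epsilon)$ — is true, but ``convexity-up of the maximal direction mass'' is not the right reason; it follows instead from monotonicity of the backward mass along the geodesic toward $\Bary(\mu_\varphi)$ together with the observation that distinct points at distance exactly $\epsilon$ from $\Bary(\mu_\varphi)$ have pairwise disjoint ``outward'' complements, so all but finitely many of them automatically have backward mass bounded away from $\tfrac12$.)

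For comparison: the paper does not prove this proposition at all — it is quoted from \cite{KJThesis}, Proposition 4.1 — and the route used there is essentially the ``machinery-heavy alternative'' you mention in your last sentence: one shows that the normalized functions $\tfrac{1}{d^n(d^n-1)}\ordRes_{\varphi^n}(\cdot)$ converge (uniformly on suitable subsets of $\hberk$) to the diagonal Arakelov--Green's function $g_\varphi(x,x)$ up to a constant, whose minimum locus is $\Bary(\mu_\varphi)$, and then uses stability of minimizing sets under such convergence together with the properness of $\ordRes_{\varphi^n}$ toward type I points; this is the same circle of ideas the paper later invokes via \cite{KJThesis} Corollary 4.8. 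If you want to salvage your measure-theoretic route, the work to be done is precisely a quantitative, $\gamma$-uniform version of DeMarco's surplus asymptotic including (or excluding, with proof) the $I(d)$ reductions.
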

The main application of this result to the present work is that, if $\zeta\in \MinResLoc(\varphi^n)$ for all $n$, then $\zeta\in \Bary(\mu_\varphi)$.

\subsection{Proof of Corollary~\ref{cor:Berkcorollary}}
\begin{proof}
Suppose first that the minimal resultant iteration formula holds. Then by Theorem~\ref{thm:mainthm}, given a coordinate system in which $\varphi$ has semistable reduction we must also have that $\varphi^n$ has semistable reduction. Applying Proposition~\ref{prop:minresss}, this says that any $\zeta\in \MinResLoc(\varphi)$ must lie in $\MinResLoc(\varphi^n)$ for every $n$, hence the containment $\MinResLoc(\varphi)\subseteq \MinResLoc(\varphi^n)$ for all $n$. The assertion that $(\varphi)_v\not\in I(d)$ for such coordinates follows from Theorem~\ref{thm:mainthm}. This concludes the proof of (A)$\implies$(B).

Now suppose that for all $n$, $\MinResLoc(\varphi)\subseteq \MinResLoc(\varphi^n)$, and that for any $\zeta\in \MinResLoc(\varphi)$ with $\zeta = \gamma(\zetaG)$, we have $(\varphi^\gamma)_v\not\in I(d)$. By Proposition~\ref{prop:convergenceofMinResLoc} any $\epsilon$-ball of $\Bary(\mu_\varphi)$ must contain all $\MinResLoc(\varphi^m)$ for $m$ larger than or equal to some threshold $N$. Thus, each point $\zeta\in\MinResLoc(\varphi)$ lies in $\Bary(\mu_\varphi)$, and our hypothesis in this case guarantees that $(\varphi^\gamma)_v\not\in I(d)$ for $\zeta=\gamma(\zetaG)$. This completes the proof of (B)$\implies$(C).

Finally, suppose that there is some point $\zeta\in \Bary(\mu_\varphi)$ with $(\varphi^\gamma)_v\not\in I(d)$ and $\zeta = \gamma(\zetaG)$. Without loss of generality we may assume that $\zeta = \zetaG$. The condition $\zetaG\in \Bary(\mu_\varphi)$ gives that $\widetilde{\mu_\varphi}(\{z\}) \leq \frac{1}{2}$ for all points $z\in \PP^1(k)$. By Proposition~\ref{prop:measuresagree}, this implies that $\mu_{\varphi_v}(\{z\}) \leq \frac{1}{2}$ for $z\in \PP^1(k)$. In particular, since $(\varphi)_v \not\in I(d)$, Propositions~3.2 and~3.3 of \cite{DeMarco2} imply that $(\varphi^n)_v$ is semistable for all $n$. Thus, by Theorem~\ref{thm:mainthm}, we conclude that the minimal resultant iteration formula holds. This completes (C)$\implies$(A).

\end{proof}

\section{An Equivalent Condition in Algebraic Geometry}\label{sect:AG}

The conditions given in our main theorem are equivalent to two other sets of conditions, which we give here as corollaries. The first equivalent set of conditions we will give is really just a rephrasing of what we already have in more geometric language, which may yield useful insight. There is a natural diagram of graded rings:

%

\[
\xymatrix{
A_d^{\SL_2} \ar[r]  & A_d  \\
A_{d^n}^{\SL_2} \ar[u] \ar[r] &  A_{d^n} \ar[u]
}
\]
%

The vertical maps are given by the iteration morphism, which preserves $\SL_2$ invariance because iteration commutes with the group action. If we apply $\mathrm{Proj}$ to the entire diagram then we get, passing from top right to bottom left, a morphism that is defined on an open set $U_n$ of $\PP^{2d+1}$: 
\[
U_n \rightarrow (\mathcal{M}_{d^n})^{ss}
\]

If we now base change to $k$, to get a diagram of varieties, then $U_n$ consists of all maps that lie outside of $I(d)$, are semi-stable, and for which the $n$-th iterate is semi-stable. If we then take the intersection $\cap_n U_n$, we get a set for which all iterates are semi-stable.

\begin{cor}
\label{Unforall}
A map $\varphi$ has minimal resultant satisfying $(R_{[\varphi^n]})_v=N \cdot (R_{[\varphi]})_v$ if and only if there exists of choice of coordinates for which the reduction $\varphi_v$ is in $U_n$, the complement of the indeterminacy locus of the rational map $\PP^{2d+1} \rightarrow (\mathcal{M}_{d^n})^{ss}$, for every $n$.
\end{cor}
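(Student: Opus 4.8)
The plan is to observe that Corollary~\ref{Unforall} is essentially a translation of Theorem~\ref{thm:mainthm} into the language of the $\mathrm{Proj}$ diagram, so the proof should amount to identifying the set $U_n$ explicitly and then invoking the main theorem. First I would unwind the definition of $U_n$: applying $\mathrm{Proj}$ to the diagram of graded rings, the composite $A_{d^n}^{\SL_2}\to A_d$ (top-right corner to bottom-left) is induced by first including the $\SL_2$-invariants into $A_{d^n}$, then applying the iteration map $A_{d^n}\to A_d$. A morphism of schemes is induced on the locus where this map of graded rings is ``defined,'' and by the discussion of the semi-stable locus in Section~\ref{semistability} (where $(\PP^{2d+1})^{ss}$ is characterized as the largest open set on which $A_d^{\SL_2}\to A_d$ induces a morphism), the indeterminacy locus of $\mathrm{Proj}$ applied to the outer composite is exactly the union of three bad loci: the points where the iteration map $\Gamma_n$ is undefined (this is $I(d)$, by the Proposition in Section~\ref{indeterminacy}), the points failing semi-stability for $\varphi$ itself, and the points whose image under $\Gamma_n$ fails semi-stability (i.e. $\varphi^n$ not semi-stable). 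This is precisely the claim already asserted in the paragraph preceding the corollary; I would simply make it rigorous by factoring $\mathrm{Proj}$ of the composite through $\mathrm{Proj}$ of each edge of the square.

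Next I would translate the reduction-theoretic statement. Given $\varphi\in K(z)$ with a normalized lift, its reduction $\varphi_v$ is the point $[\tilde a,\tilde b]\in\PP^{2d+1}(k)$ (Notation in Section~\ref{indeterminacy}), and choosing a different coordinate system for $\varphi$ over $K$ replaces $\varphi$ by a conjugate $\varphi^\gamma$ and $\varphi_v$ by $(\varphi^\gamma)_v$ for a suitable $\gamma\in\PGL_2(K)$. So the condition ``there exists a choice of coordinates for which $\varphi_v\in U_n$ for every $n$'' is, after base change to $k$, exactly the condition ``there exists a coordinate system in which $\varphi_v\notin I(d)$, $\varphi^n_v$ is semi-stable for all $n$.'' I would then note that if $\varphi$ has semi-stable reduction in some coordinate system at all, then in particular that coordinate system witnesses one factor of membership in $U_n$; conversely membership in $U_n$ for all $n$ forces $\varphi_v$ semi-stable and $\varphi^n_v$ semi-stable for all $n$ and $\varphi_v\notin I(d)$, which is literally the right-hand condition of Theorem~\ref{thm:mainthm}. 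Applying Theorem~\ref{thm:mainthm} then yields the minimal resultant iteration formula $R_{[\varphi^n]}=N\cdot R_{[\varphi]}$, which (interpreting $(R_{[\varphi^n]})_v$ and $(R_{[\varphi]})_v$ as the ord-values defined in Notation~\ref{resultantdivisor}) is the asserted equality. The reverse direction is identical, running Theorem~\ref{thm:mainthm} the other way to produce the semi-stable coordinate system, which then lies in every $U_n$.

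The one point requiring genuine care — and the main obstacle — is the functoriality of $\mathrm{Proj}$ here: $\mathrm{Proj}$ is only a functor on the category of graded rings with morphisms that are ``eventually surjective'' or at least generate in large degree, and in general $\mathrm{Proj}$ of a composite need not be the composite of the $\mathrm{Proj}$'s on the nose — one has to track the open loci of definition carefully. What rescues us is that each individual edge of the square is well understood: the horizontal edges are the quotient maps whose indeterminacy is the non-semi-stable locus (Section~\ref{semistability}), and the vertical edge $A_{d^n}\to A_d$ is the iteration map whose indeterminacy on $\PP^{2d+1}$ is exactly $I(d)$ (Proposition in Section~\ref{indeterminacy}, which is characterized algebraically and works over $\ZZ$, hence over $k$ after base change). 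So I would argue that a point $p\in\PP^{2d+1}(k)$ lies in the locus of definition of $\mathrm{Proj}$ of the outer composite if and only if $p$ avoids all three indeterminacy loci, by chasing the map $A_{d^n}^{\SL_2}\to A_{d^n}\to A_d$ through the factorization $\mathrm{Proj}(A_d)\dashrightarrow\mathrm{Proj}(A_{d^n})\dashrightarrow\mathrm{Proj}(A_{d^n}^{\SL_2})$ and using that $\Gamma_n$ maps the semi-stable locus of $\PP^{2d+1}$ into $\PPss[d^n]$ precisely on the semi-stable-iterate locus. Once this identification $U_n=(\PP^{2d+1})^{ss}_{k}\cap\Gamma_n^{-1}\bigl(\PPss[d^n]_k\bigr)\setminus I(d)$ is established, the corollary is an immediate restatement of Theorem~\ref{thm:mainthm}.
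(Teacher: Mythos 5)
Your proposal is correct and follows essentially the same route as the paper: identify $U_n$ (after base change to $k$) as the locus of points that are semi-stable, have semi-stable $n$-th iterate, and lie outside $I(d)$ — exactly as in the paragraph preceding the corollary — and then invoke Theorem~\ref{thm:mainthm} in both directions. Your extra care about the functoriality of $\mathrm{Proj}$ and the implicit existence of a semi-stable coordinate system only fills in details the paper leaves as ``immediate from the preceding comments.''
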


\begin{proof}
This is immediate from the preceding comments and Theorem \ref{mainthm}. 
\end{proof}

Being an intersection of infinitely many Zariski open sets, the set $\cap_n U_n$ is somewhat mysterious; a priori we can't say much about it's topology. An interesting question is what the codimension of its complement is. Knowing this would give us a better sense of ``how many" maps satisfy the minimal resultant formula. 

Exploiting the $\SL_2$ invariance in the above diagram gives another result which relates the semi-stability of an iterate to that of the original map:
\begin{prop}
\label{ssiterates}
Suppose that for some $n$, $\varphi^n$ has semi-stable reduction at $p$, and that the resultant iteration formula $$R_{\varphi^n} = N\cdot R_{\varphi}$$ holds for this $n$. Then:
	\begin{enumerate}
	\item $\varphi$ itself has semi-stable reduction, and thus the minimal resultant iteration formula $R_{[\varphi^n]}= N\cdot R_{[\varphi]}$ holds for this particular $n$. 
	\item The resultant iteration formula $R_{\varphi^n} = N\cdot R_{\varphi}$ holds for every other $n$ as well. 
	\end{enumerate}
\end{prop}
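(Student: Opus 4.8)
The plan is to leverage the $\SL_2$-invariance that is baked into the iteration diagram, essentially re-running the argument of Proposition~\ref{minimaliterate} but now with the resultant form playing the role of the ``distinguished invariant'' rather than an arbitrary coordinate. For part (1), the key observation is that the hypotheses $\varphi^n$ semistable and $R_{\varphi^n} = N\cdot R_{\varphi}$ together force $R_{\varphi} = R_{[\varphi]}$; if this fails, there is a conjugate $\psi = \varphi^\gamma$ with $R_\psi < R_\varphi$, and iterating this conjugate (using $R_{\psi^n} \le R_{(\varphi^n)^\gamma}$ plus Lemma~\ref{resultant} applied to both $\varphi$ and $\psi$, exactly as in the proof of Theorem~\ref{thm:mainthm}) produces $R_{[\varphi^n]} < N\cdot R_{[\varphi]} = N \cdot R_\varphi = R_{\varphi^n}$, which is fine, but then normalization considerations at the level of $\varphi^n$ — the inequality (\ref{eq:contradiction}) argument — show the coefficients of the normalized lift of $\varphi$ could not have been normalized, a contradiction. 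Concretely I would argue: since $\varphi^n$ is semistable, $R_{\varphi^n} = R_{[\varphi^n]}$ by Proposition~\ref{semistableimpliesminimal}; combining with the hypothesis gives $R_{[\varphi^n]} = N\cdot R_\varphi$, so $R_\varphi \le R_{[\varphi]} \cdot (R_{[\varphi^n]}/R_{[\varphi^n]})$... more cleanly, $N\cdot R_{[\varphi]} \le N\cdot R_\varphi = R_{\varphi^n} = R_{[\varphi^n]}$, and the reverse inequality $R_{[\varphi^n]} \le N\cdot R_{[\varphi]}$ always holds (conjugate $\varphi$ to a minimal model and iterate, using Lemma~\ref{resultant}), whence $R_\varphi = R_{[\varphi]}$, i.e. $\varphi$ has a globally minimal model already, so $\varphi$ is semistable by Proposition~\ref{prop:minresss}(A). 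That $\varphi$ being semistable yields the minimal resultant iteration formula $R_{[\varphi^n]} = N\cdot R_{[\varphi]}$ for this $n$ is then immediate from $R_{[\varphi]} = R_\varphi$, $R_{[\varphi^n]} = R_{\varphi^n}$, and the hypothesis.

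For part (2), once we know $\varphi$ is semistable (from part (1)), we want the resultant iteration formula $R_{\varphi^m} = N_m \cdot R_\varphi$ — with $N_m = d^m(d^m-1)/(d(d-1))$ — to hold for every $m$, which by Proposition~\ref{minimaliterate} is equivalent to $\varphi_v \not\in I(d)$. So it suffices to deduce $\varphi_v \not\in I(d)$ from the single assumption that the formula holds for our fixed $n$. This is precisely the content of Proposition~\ref{minimaliterate}'s converse direction applied with that one value of $n$: the argument there fixes a particular $n$, produces a homogeneous form $H_n$ nonvanishing on the reduction of the normalized iterated coefficients, builds the total-degree-zero rational function $\sigma = H_n^{2d^n}/\rho_{d^n}^{d_n}$, and concludes $|H_n(a_n,b_n)| = 1$, hence $|c_n| = 1$, hence $\varphi_v \not\in I(d)$ — all from the formula holding at that single $n$. (Indeed the remark in that proof that ``later we will reference this proof in a slightly different context (see Proposition~\ref{ssiterates})'' is pointing exactly here: the form $H_n$ need not be a coordinate, and the single-$n$ hypothesis suffices.) Once $\varphi_v \not\in I(d)$, Proposition~\ref{minimaliterate} immediately gives $R_{\varphi^m} = N_m \cdot R_\varphi$ for all $m$.

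I would organize the write-up as: first establish $R_\varphi = R_{[\varphi]}$ and hence semistability of $\varphi$ (part 1), citing Propositions~\ref{semistableimpliesminimal} and~\ref{prop:minresss} and the always-true inequality $R_{[\varphi^n]} \le N\cdot R_{[\varphi]}$; then invoke the converse direction of Proposition~\ref{minimaliterate} with the fixed $n$ to get $\varphi_v \not\in I(d)$; then apply the forward direction of Proposition~\ref{minimaliterate} to get the formula for all $m$ (part 2). The main obstacle I anticipate is bookkeeping around the distinction between the \emph{local} resultant $R_\varphi$ (depending on a normalized lift) and the \emph{minimal} resultant $R_{[\varphi]}$, and making the always-true inequality $R_{[\varphi^n]} \le N\cdot R_{[\varphi]}$ fully rigorous — one must pick $\gamma$ realizing $R_{[\varphi]} = R_{\varphi^\gamma}$, note $(\varphi^\gamma)^n = (\varphi^n)^\gamma$, normalize the lift of $\varphi^\gamma$, and track the possible non-normalization of the iterated lift via the $-2d^n\min_{i,j}(\ord(a_i^n),\ord(b_j^n))$ correction term, exactly as in the displayed computation in the proof of Theorem~\ref{thm:mainthm}. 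The rest is a direct citation of Proposition~\ref{minimaliterate} and its proof.
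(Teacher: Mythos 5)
Your proof is correct, but for part (1) it takes a genuinely different route from the paper's. The paper stays inside the GIT framework of the diagram preceding the proposition: because $\varphi^n$ has semistable reduction, the non-vanishing form $H_n$ in the proof of Proposition~\ref{minimaliterate} can be chosen to be $\SL_2$-invariant, and its image under the iteration map $A_{d^n}^{\SL_2}\to A_d^{\SL_2}$ is an invariant form not vanishing at $\varphi_v$ (by the same total-degree-zero computation), which gives semistability of $\varphi$ directly; the same computation gives $|c_n|=1$, hence $\varphi_v\notin I(d)$, so both conclusions come out of a single argument and only the classical characterization of semistable points by non-vanishing invariant forms is needed. You instead prove $R_\varphi=R_{[\varphi]}$ via the squeeze $N\,R_{[\varphi]}\le N\,R_\varphi=R_{\varphi^n}=R_{[\varphi^n]}\le N\,R_{[\varphi]}$, using Proposition~\ref{semistableimpliesminimal} and the always-true inequality $R_{[\varphi^n]}\le N\,R_{[\varphi]}$ (which your correction-term argument, conjugating to a minimal model and using Lemma~\ref{resultant} with the $-2d^n\min$ term of nonnegative ord, does establish), and then invoke Rumely's converse, Proposition~\ref{prop:minresss}(A), to pass from minimality to semistability. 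That is valid, but it leans on Rumely's theorem as a black box where the paper is self-contained, and it silently uses the (true, but worth stating) identification of $R_{[\varphi]}$ with the minimum of $\ordRes_\varphi$ over $\pberk$, so that $R_\varphi=R_{[\varphi]}$ really means $\zetaG\in\MinResLoc(\varphi)$. For part (2) you and the paper do the same thing: you correctly note that the converse-direction computation in Proposition~\ref{minimaliterate} uses the resultant formula only at the single fixed $n$ (the statement, as written, assumes all $n$, so you are citing the proof rather than the proposition), conclude $\varphi_v\notin I(d)$, and then apply the forward direction to get the formula for every iterate; this is exactly the re-use anticipated by the parenthetical remark in that proof.
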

\begin{proof}

We are in the situation of the proof of the backwards direction of Proposition \ref{minimaliterate}; let $(a,b)$ be normalized coefficients as in this proof, and follow the proof through, noting that because $\varphi^n$ has semi-stable reduction, the non-vanishing form $H_n(A_n,B_n)$ can be chosen to be $\SL_2$ invariant (the existence of such a form is one characterization of the semi-stable locus). Then the image of this form in $A_d^{\SL_2}$ under the map in the above diagram gives an $\SL_2$ invariant form for which $\varphi$ doesn't vanish, by the same calculation in the proof of Proposition \ref{minimaliterate}. Thus $\varphi$ has semi-stable reduction. This gives the first statement. Noting that this calculation gives, as before, that $\varphi_v \notin I(d)$, we see that by the forward direction of Proposition $\ref{minimaliterate}$ the second statement follows. 
\end{proof}


\section{An Application to Potential Theory}\label{sect:potentialtheory}

As another corollary of Theorem~\ref{mainresult}, we are able to obtain a formula for the minimal value of the diagonal Arakelov-Green's function $g_\varphi(x,x)$, provided the equivalent conditions of Theorem~\ref{thm:mainthm} hold. 

Given a probability measure $\nu$ on $\pberk$, the (normalized) Arakelov-Green's function attached to $\nu$ is $$g_\nu(x,y) = \int_{\pberk} -\log\delta(x,y)_\zeta d\nu(\zeta) + C\ ;$$ here, $\delta(x,y)_\zeta$ is the Hsia kernel which measures the distance between $x$ and $y$ relative to the basepoint $\zeta$ (see \cite{BakerRumely} Chapter 4 for the definition of the Hsia kernel, and \cite{BakerRumely} Chapter 8 for a discussion of the Arakelov-Green's function on $\pberk$).The constant $C$ is chosen so that $$\iint g_\nu(x,y) d\nu(x)d\nu(y) = 0\ .$$ 

In the case that $\nu = \mu_\varphi$ is the equilibrium measure associated to $\varphi$, we simply write $g_\varphi (x,y) = g_{\mu_\varphi}(x,y)$. Under the hypotheses of our theorem, we obtain:

\begin{cor}
Let $K$ be a complete, algebraically closed non-Archimedean valued field, and let $\varphi\in K(z)$ have degree $d\geq 2$. If $\varphi$ satisfies either of the equivalent conditions in Theorem~\ref{thm:mainthm}, then
\begin{equation}\label{eq:minAGformula}
\min_{x\in \pberk} g_\varphi(x,x) = \frac{1}{d(d-1)}R_{[\varphi]}\ .
\end{equation}

In this case, $\min_{x\in \pberk} g_\varphi(x,x) >0$ if and only if $\varphi$ does not have good reduction. 
\end{cor}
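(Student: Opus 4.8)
The plan is to derive the formula \eqref{eq:minAGformula} from two ingredients: first, a known identity relating $\min_{x\in\pberk} g_\varphi(x,x)$ to the value of $\ordRes_\varphi$ on its minimal locus, and second, the hypothesis, which via Theorem~\ref{thm:mainthm} forces that minimal value to equal $R_{[\varphi]}$ in a coordinate system where we can compute it. Concretely, there is a formula (due to Rumely, and also appearing in Baker--Rumely's potential-theoretic framework) expressing the diagonal Arakelov--Green's function in terms of the Laplacian/energy of $\ordRes_\varphi$; the upshot is that $\min_{x\in\pberk} g_\varphi(x,x)$ is attained precisely on $\MinResLoc(\varphi)$ and equals $\frac{1}{d(d-1)}$ times the minimal value $R_{[\varphi]}$ of $\ordRes_\varphi$. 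I would state this as a lemma, citing \cite{Rumely} (and \cite{BakerRumely} Chapter~8 for the Arakelov--Green's function machinery), since under our hypotheses the relevant objects are particularly well-behaved.

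First I would fix a coordinate system $\gamma$ in which $\varphi^\gamma$ has semistable reduction, which exists by Proposition~\ref{prop:minresss} since $\MinResLoc(\varphi)$ is nonempty and contains a type~II point. In these coordinates, by Proposition~\ref{semistableimpliesminimal} we have $R_{\varphi^\gamma} = R_{[\varphi]}$, i.e. $\ordRes_\varphi(\zeta) = R_{[\varphi]}$ where $\zeta = \gamma(\zetaG) \in \MinResLoc(\varphi)$. Then I would invoke the lemma above: because $g_\varphi(x,x)$ differs from (a constant multiple of) $\ordRes_\varphi(x)$ by terms that vanish at the minimum, and because the normalizing constant $C$ in the definition of $g_\varphi$ is pinned down by $\iint g_\varphi\, d\mu_\varphi d\mu_\varphi = 0$, one gets $\min_{x} g_\varphi(x,x) = \frac{1}{d(d-1)} R_{[\varphi]}$ directly. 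The hypothesis of the corollary enters to guarantee that the minimal resultant locus behaves compatibly with iteration (so the constant $\frac{1}{d(d-1)}$, rather than some $n$-dependent quantity, is the correct one) — this is exactly the content of Theorem~\ref{thm:mainthm} and Corollary~\ref{cor:Berkcorollary}.

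For the second assertion, I would argue: $R_{[\varphi]} \geq 0$ always, with $R_{[\varphi]} = 0$ if and only if $\varphi$ has potential good reduction; but having fixed coordinates with semistable (in particular, one can further reduce to stable or just use $R_\varphi = R_{[\varphi]}$) reduction, the condition $R_{[\varphi]} = 0$ means $R_\varphi = 0$ in those coordinates, i.e. $\varphi^\gamma$ has good reduction, i.e. $\varphi$ has good reduction (good reduction being preserved under the $\PGL_2(K)$-action up to conjugacy — more precisely, if $\varphi$ has potential good reduction and also semistable reduction in the chosen coordinates then it has good reduction there). Hence $\min_x g_\varphi(x,x) = \frac{1}{d(d-1)}R_{[\varphi]} > 0$ exactly when $\varphi$ does not have good reduction. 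I would remark, as the text already does, that the equivalence ``$\min_x g_\varphi(x,x) > 0 \iff$ bad reduction'' holds unconditionally by \cite{Baker}, so only the explicit formula \eqref{eq:minAGformula} is genuinely new here.

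The main obstacle I anticipate is pinning down the exact relationship between $g_\varphi(x,x)$ and $\ordRes_\varphi(x)$ — in particular verifying that the normalizing constant $C$ works out so that the minimum of $g_\varphi(x,x)$ is precisely $\frac{1}{d(d-1)}\ordRes_\varphi$ on $\MinResLoc(\varphi)$, with no extra additive correction. This requires carefully tracking the potential-theoretic identities (the Laplacian of $\ordRes_\varphi$ is related to $\mu_\varphi$ and the canonical measure at $\zetaG$, à la Rumely) and confirming that the energy integral $\iint \ordRes_\varphi$-type term against $\mu_\varphi \times \mu_\varphi$ contributes the right normalization; the hypothesis of the corollary is what makes $\MinResLoc(\varphi) \subseteq \MinResLoc(\varphi^n)$ and $\MinResLoc(\varphi) \subseteq \Bary(\mu_\varphi)$, which is what ultimately forces the clean constant. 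Everything else is bookkeeping with results already in the excerpt.
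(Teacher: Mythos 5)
There is a genuine gap: the ``known identity'' you take as your key lemma --- that $\min_{x\in\pberk} g_\varphi(x,x)$ is attained precisely on $\MinResLoc(\varphi)$ and equals $\frac{1}{d(d-1)}\min_{x}\ordRes_\varphi(x)$ --- is not a result of Rumely or Baker--Rumely, and it is false as an unconditional statement; it is essentially the corollary itself. The paper's own Latt\`es computations give a counterexample: for $m$ even one has $\min_{x\in\pberk} g_{\psi_m}(x,x) = -\frac{1}{24}\log|q|$, while $\frac{1}{d(d-1)}R_{[\psi_m]}$ computed from (\ref{eq:minresLattes}) is strictly larger (e.g.\ $-\frac{1}{18}\log|q|$ for $m=2$), and the minimal resultant loci $\MinResLoc(\psi_{m^n})$ move with $n$, so the minimum of $g_\varphi(x,x)$ is not located on $\MinResLoc(\varphi)$ in general. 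What is true unconditionally is only an asymptotic statement over iterates: $g_\varphi(x,x)$ is governed by the functions $\frac{1}{d^n(d^n-1)}\ordRes_{\varphi^n}(x)$ as $n\to\infty$, not by $\frac{1}{d(d-1)}\ordRes_\varphi(x)$ at level one. You flag exactly this issue (``pinning down the normalizing constant $C$, with no extra additive correction'') as the main obstacle, but you offer no argument for it, and no argument of the kind you sketch (a level-one identity between $g_\varphi$ and $\ordRes_\varphi$ plus the hypothesis ``making the constant clean'') can work, since the hypothesis must enter quantitatively at every iterate.

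The paper's proof closes this gap with a result from the first author's thesis (\cite{KJThesis} Corollary 4.8): $\min_{x\in\pberk} g_\varphi(x,x) = \lim_{n\to\infty}\bigl(\min_{x\in\pberk}\frac{1}{d^{2n}-d^n}\ordRes_{\varphi^n}(x)\bigr) = \lim_{n\to\infty}\frac{1}{d^n(d^n-1)}R_{[\varphi^n]}$. Under the hypothesis, Theorem~\ref{thm:mainthm} gives $R_{[\varphi^n]} = \frac{d^n(d^n-1)}{d(d-1)}R_{[\varphi]}$ for every $n$, so every term in the limit equals $\frac{1}{d(d-1)}R_{[\varphi]}$, which yields (\ref{eq:minAGformula}). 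Your treatment of the final assertion (identifying $R_{[\varphi]}=0$ with good reduction after passing to semistable coordinates) is fine and is, if anything, more careful than the paper's one-line remark; but without the limit formula (or an equivalent computation carried out over all iterates) the main formula is not proved.
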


\noindent\textit{Remark:} In regards to the last statement of the corollary, Matt Baker has obtained a more general result that holds unconditionally; see \cite{Baker} Theorem 3.15. 

\begin{proof}
If $\varphi$ satisfies the equivalent conditions of Theorem~\ref{thm:mainthm}, then after a suitable change of coordinates $\varphi^n$ has semistable reduction for each $n$, and in particular 
\begin{align*}
\min_{x\in \pberk}\ \frac{1}{d^{n}(d^n-1)} \ordRes_{\varphi^n}(x) &= \frac{1}{d^n(d^n-1)}R_{[\varphi^n]}\\
 & =\frac{1}{d(d-1)} R_{[\varphi]}\ .
\end{align*} 
\noindent By \cite{KJThesis} Corollary 4.8, we find that 
\begin{align*}
\min_{x\in \pberk} g_\varphi(x,x) &= \lim_{n\to\infty} \left(\min_{x\in \pberk} \frac{1}{d^{2n}-d^n} \ordRes_\varphi(x)\right)\\
& = R_{[\varphi]}\ .
\end{align*}

 The last assertion of the corollary follows immediately, since $\varphi$ has good reduction if and only if $R_{[\varphi]} = 0$.
\end{proof}

\section{Flexible Latt\`es Maps} \label{examples}
We illustrate our main theorem for the case of Latt\`es maps on Tate curves. Let $q\in K$ with $0<|q|<1$. The Tate curve corresponding to $q$ is an elliptic curve $E$ whose $K$ points are isomorphic (as a group) to $K^\times/q^\ZZ$. Tate \cite{Tate} gave an explicit parameterization of $E$: there exist meromorphic functions
\begin{align}
x(w) & = \frac{w}{(1-w)^2} + \sum_{m=1}^\infty \left(\frac{q^m w}{(1-q^m w)^2} + \frac{q^m w^{-1}}{(1-q^m w^{-1})^2} - 2 \frac{q^m}{(1-q^m)^2}\right)\ ,\\
y(w) &= \frac{w^2}{(1-w)^3} + \sum_{m=1}^\infty \left(\frac{q^{2m} w^2}{(1-q^m w)^3} - \frac{q^m w^{-1}}{(1-q^m w^{-1})^3} + \frac{q^m}{(1-q^m)^2}\right)\ .
\end{align} on $K$ which satisfy the relation 
\begin{equation}\label{eq:Tatecurve}
E: y^2 + xy = x^3 -b_2 x - b_3\ ,
\end{equation} for explicit series $b_2 = 5q + 45 q^2 + ...$ and $b_3 = q + 23q^2 + ...$. The Latt\`es map for multiplication-by-$m$ on $E$ is the unique map $\psi_m$ which completes the diagram

\[
\xymatrix{
E \ar[d]^x \ar[r]^{[m]}  & E\ar[d]^x  \\
\PP^1(K) \ar[r]^{\psi_m} & \PP^1(K)
}\ ,
\] where $x:E\to \PP^1(K)$ is projection onto the $x$ coordinate. Fitting this into a larger diagram, we find

\[
\xymatrix{
\aberk\setminus\{0\} \ar[d]^{x(w)} & K^\times \ar@{^{(}->}[l]\ar[d]^{x(w)} \ar[r]^{w\mapsto w^m} & K^\times \ar[d]^{x(w)}\\
\pberk  &\PP^1(K) \ar@{^{(}->}[l]\ar[r]^{\psi_m} & \PP^1(K)
}
\] where we are including $K^\times$ into its Berkovich analytification $\aberk \setminus\{0\}$. The extension of $x(w)$ to $\aberk\setminus\{0\}$ can be described as follows: let $[\cdot]_\zeta$ be a multiplicative seminorm on the algebra $K\langle T, T^{-1}\rangle$ of power series which converge on $K^\times$. If $\zeta\in \aberk\setminus\{0\}$ is a pole of $x$, then $x(\zeta) = \infty\in \pberk$. Otherwise, define $x(\zeta)$ by $$[F]_{x(\zeta)} := [F\circ x]_{\zeta} \ , \ \forall F\in K[T]\ .$$

\begin{lem}\label{lem:fixedpoints}
The segment $(0, \infty)$ in $\aberk$ is mapped under $x(w)$ to the segment $[\zetaG, \zeta_{0, |q|^{1/2}}]$. Consequently the Julia set for $\psi_m$ in $\pberk$ is the segment $[\zetaG, \zeta_{0, |q|^{1/2}}]$. There are $m$ type II fixed points in the Julia set given explicitly by 
$$\zeta_i = \left\{\begin{matrix} 
\zeta_{0, |q|^{\frac{i}{2(m+1)}}}\ , &  i \textrm{ even}\\
\zeta_{0, |q|^{\frac{i-1}{2(m-1)}}}\ , & i \textrm{ odd}\end{matrix}\right.
$$ for $i=1,2 ..., m$. 
\end{lem}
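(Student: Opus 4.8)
The plan is to analyze the action of the $x$-coordinate map on the Berkovich analytification of the Tate curve and track how the segment $(0,\infty)$ in $\aberk$ gets folded onto $\pberk$. First I would compute, for each type II point $\zeta_{0,r}$ with $0<r<1$ along the segment from $0$ to $\infty$ in $\aberk$, the image $x(\zeta_{0,r})$ by estimating the Gauss-norm of the defining series. Since $x(w) = w/(1-w)^2 + \sum_{m\geq 1}(\cdots)$ and $0<|q|<1$, the non-archimedean ultrametric inequality lets me identify which term dominates the sup-norm over the disc $D(0,r)$: for $r$ in the range $|q|^{1/2} \le r \le 1$ the leading term $w/(1-w)^2$ dominates, giving $x(\zeta_{0,r}) = \zeta_{0,r}$ (so this sub-segment maps isometrically), while for $|q| < r < |q|^{1/2}$ the symmetry $w \mapsto q/w$ of the Tate parameterization (reflecting $x(qw^{-1}) = x(w)$, the hallmark of the Tate curve) forces $x(\zeta_{0,r}) = x(\zeta_{0,|q|/r}) = \zeta_{0,|q|/r}$. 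Thus the full segment $(0,\infty)$ folds at $\zeta_{0,|q|^{1/2}}$ and maps onto $[\zetaG, \zeta_{0,|q|^{1/2}}]$. This is essentially a computation with the explicit Tate series together with the functional equation $x(w) = x(q/w)$.

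Next I would deduce the statement about the Julia set. Because $K^\times \hookrightarrow \aberk\setminus\{0\}$ with $x$ as in the commuting diagram, and multiplication-by-$m$ on the Tate curve $K^\times/q^\ZZ$ corresponds to $w\mapsto w^m$ on $K^\times$, I can compute the equilibrium measure of $w\mapsto w^m$ on $\aberk\setminus\{0\}$ — it is the uniform (Haar-type) measure on the segment $(0,\infty)$, or more precisely its pushforward under $x$. Pushing forward by $x$ and using that $x$ is a degree-two branched cover folding at $\zeta_{0,|q|^{1/2}}$, the support of $\mu_{\psi_m}$ — i.e.\ the Julia set of $\psi_m$ — is exactly $x((0,\infty)) = [\zetaG, \zeta_{0,|q|^{1/2}}]$. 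Alternatively, one invokes functoriality of the Julia set under semiconjugacy (the Julia set of $w\mapsto w^m$ on $\pberk$ is the segment $[0,\infty]$'s analogue, namely the unit-circle point plus its orbit under scaling, and $x$ maps it onto the claimed segment).

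Finally I would locate the $m$ type II fixed points inside the Julia set. On $\aberk\setminus\{0\}$, the map $w\mapsto w^m$ fixes $\zeta_{0,r}$ iff $r^m = r$, i.e.\ $r\in\{0,1,\infty\}$ as type II points — but pulling back through the quotient by $q^\ZZ$, the relevant fixed points of $[m]$ on $K^\times/q^\ZZ$ acting on the skeleton are the points $\zeta_{0,r}$ with $r^m \equiv r \pmod{|q|^\ZZ}$, i.e.\ $r^{m} = |q|^k r$ for some integer $k$, giving $r = |q|^{k/(m-1)}$, and likewise the branch coming from the involution gives $r = |q|^{k/(m+1)}$. Translating into the coordinate $i$ (with $i$ even parametrizing one family via $r=|q|^{i/(2(m+1))}$ and $i$ odd the other via $r=|q|^{(i-1)/(2(m-1))}$) and checking that exactly the values $i=1,\dots,m$ land inside $[\zetaG,\zeta_{0,|q|^{1/2}}]$ yields the stated list; one must verify these are genuinely fixed (not merely periodic) and that there are no others in the segment.

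The main obstacle I expect is the first step: carefully justifying which term of the Tate series for $x(w)$ achieves the sup-norm on each disc $D(0,r)$, and in particular handling the boundary radius $r=|q|^{1/2}$ where the symmetry becomes exact and two terms tie. This is where the ultrametric estimates must be done with care, and where the functional equation $x(w)=x(q/w)$ does the essential work; once the folding picture is established, the Julia-set and fixed-point assertions follow fairly mechanically from the theory of the equilibrium measure and the explicit radii.
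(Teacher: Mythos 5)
Your first and third steps are essentially the paper's proof. The paper likewise reduces to the fundamental annulus $|q|<|w|<1$ via the functional equations $x(qw)=x(w)=x(w^{-1})$ and compares the terms $w/(1-w)^2$, $q^mw/(1-q^mw)^2$, $q^mw^{-1}/(1-q^mw^{-1})^2$, $2q^m/(1-q^m)^2$ by the ultrametric inequality, obtaining $|x(w)|=|w|$ for $|q|^{1/2}<|w|<1$ and $|x(w)|=|q|\,|w|^{-1}$ for $|q|<|w|<|q|^{1/2}$, hence the same folding picture; and its fixed-point computation (the $m$-fold tent map on $[\zetaG,\zeta_{0,|q|^{1/2}}]$ parameterized by logarithmic path distance, solved interval by interval) is exactly the quotient, under $t\sim -t\sim t+1$, of your congruence $mt\equiv\pm t\pmod 1$ on the skeleton upstairs, so those parts match.

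The middle step, as you state it, contains a genuine error: the map $w\mapsto w^m$ on $\aberk\setminus\{0\}$ (or on $\pberk$) has good reduction, so its equilibrium measure is the Dirac mass at $\zetaG$ and its Julia set is $\{\zetaG\}$ -- it is \emph{not} a ``uniform measure on the segment $(0,\infty)$'' (which is not even a finite measure, the segment having infinite hyperbolic length), and there is no semiconjugacy-of-Julia-sets argument available directly from $w\mapsto w^m$ on $\pberk$. The circle structure and the Haar-type invariant measure only appear after quotienting by $q^\ZZ$: one must pass to the analytic Tate curve $(\aberk\setminus\{0\})/q^{\ZZ}$, whose skeleton is a circle of circumference $-\log|q|$ on which $[m]$ acts with Haar measure as its measure of maximal entropy, and then push forward under the degree-two map induced by $x$ (which identifies $t$ with $1-t$) to land on $[\zetaG,\zeta_{0,|q|^{1/2}}]$. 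Alternatively, argue as the paper does: $(0,\infty)$ is totally invariant under $z\mapsto z^m$, so its image segment is totally invariant under $\psi_m$, and one then invokes the Favre--Rivera-Letelier description of the Julia set of a Latt\`es map over a Tate curve (\cite{FRLErgodic}, Proposition 5.1) to conclude the Julia set is the whole segment rather than merely contained in it. With that correction, your outline goes through.
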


\noindent\textit{Remark:} The conclusion that the Julia set is a segment lying in $[0, \infty]\subseteq \pberk$ has already been established in \cite{FRLErgodic} Proposition 5.1; there, Favre and Rivera-Letelier show the existence of coordinates for which the Julia set is $[\zetaG, \zeta_{0, |q|^{-1/2}}]$ and explicitly describe the action of $\varphi$ on the Julia set; but they do not identify which coordinate system gives this Julia set, which we need later in this section. The novelty in the result here is that we are able to identify explicit coordinates on $\pberk$ for which the Julia set is given by $[\zetaG, \zeta_{0, |q|^{1/2}}]$.

\begin{proof}
In order to compute the image of the segment $(0,\infty)\subseteq \aberk$, we first note that $x(w)$ satisfies $x(qw) = x(w) = x(w^{-1})$; consequently, we can restrict our attention to $w\in K^\times$ satisfying $|q| < |w| < 1$. By determining $|x(w)|$ for generic $w$ in this region, we will be able to determine explicitly the image of $(0, \infty)$. 

Consider 
\begin{align}
x_1(w) &= \frac{w}{(1-w)^2}\ ,\\
x_{2, m}(w) & = \frac{q^m w}{(1-q^m w)^2}\ ,\\
x_{3, m}(w) & = \frac{q^m w^{-1}}{(1-q^m w^{-1})^2}\ ,\\
x_{4,m}& = 2\frac{q^m}{(1-q^m)^2}\ .
\end{align} Then $x(w) = x_1(w) + \sum_{m=1}^\infty (x_{2,m}(w) + x_{3,m}(w) - x_{4,m})$. 

Note that, since $|w|<1$, we have $|1-w| = 1$, and consequently $|x_1(w)| = |w|$. Next, we also find that $|1-q^m w| = 1$, and hence $|x_{2,m}(w)| = |q|^m |w| < |w| = |x_1(w)|$ for all $m\geq1$. 

As we are also assuming that $|q|<|w|$, we find that $|1-q^m w^{-1}| = 1$ as well, so that $|x_{3,m}(w)| = |q|^m |w|^{-1}$. Finally, note that $|x_{4,m}| = |q|^m$. In all:
\begin{align*}
|x_1(w)| &= |w| > |q|^m |w| =  |x_{2,m}(w)|\ ,\\
|x_{3,m}(w)| &= |q|^m |w|^{-1} > |q|^m = |x_{4,m}|\ .
\end{align*} And so it suffices to compare $|x_1(w)|$ and $|x_{3,m}(w)|$ for $|q| < |w| < 1$. Note that $|x_{3,m}(w)|$ is largest for $m=1$, so in fact we only need to complare $|x_1(w)| = |w|$ and $|x_{3,1}(w)| = |q|\cdot |w|^{-1}$. 

When $|q|^{1/2} < |w| < 1$, we find that $|x_1(w)| = |w| > |q|\cdot |w|^{-1} = |x_{3,1}(w)|$, and hence $|x(w)| = |w|$, while for $|q| < |w| < |q|^{1/2}$ we find that $|x_{3,1}(w)| = |q|\cdot |w|^{-1} > |w| = |x_1(w)|$, and hence $|x(w)| = |q|\cdot |w|^{-1}$. 

Geometrically, this says that the segment $(\zetaG, \zeta_{0, |q|^{1/2}})\subseteq \aberk$ is mapped by $x(w)$ to the segment $(\zetaG, \zeta_{0, |q|^{1/2}})\subseteq \pberk$, while the segment $(\zeta_{0, |q|^{1/2}}, \zeta_{0, |q|})\subseteq \aberk$ is mapped onto the same segment $(\zetaG, \zeta_{0, |q|^{1/2}})$ in the reverse orientation.

The fact that the segment $[\zetaG, \zeta_{0, |q|^{1/2}}]$ is the Julia set then follows from the fact that $(0, \infty)\subseteq \aberk$ is totally invariant under the map $z\mapsto z^m$; consequently the segment $[\zetaG, \zeta_{0, |q|^{1/2}}]$ is totally invariant under the induced Latt\`es map $\psi_m$. 

The action of the Latt\`es map on $\mathcal{J}$ was first descibed in \cite{FRLErgodic} Proposition 5.1; it is an $m$-fold tent map. Partition the interval $\mathcal{J} = [\zetaG, \zeta_{0, |q|^{1/2}}]$ into $m$ intervals $\mathcal{I}_i\ , \ i= 1, 2, ..., m$ of equal width. Each $\mathcal{I}_i$ maps bijectively onto $\mathcal{J}$. Parameterize the interval $\mathcal{J}$ by $t$, where $t$ is taken with respect to the logarithmic path distance, so that $t\in [0, (1/2)\log |q|]$. We find that, on each interval $\mathcal{I}_i$, the Latt\`es map has the form 
\[
t\mapsto \left\{\begin{matrix} -m (t-\frac{i}{2m}\log |q|)\ , & i \textrm{ even}\\
m (t-\frac{i-1}{2m} \log|q|)\ , & i \textrm{ odd}\end{matrix}\right.\ .
\]
Solving for the fixed points on each interval gives the $\zeta_i$ asserted in the statement of the Lemma.

\end{proof}

\subsection{Crucial Measures for Latt\`es Maps}
With this characterization of the Julia set and the fixed points, we are in a position to apply Rumely's crucial measures in order to identify the unique conjugate of $\varphi_m$ which attains semistable reduction. 

An essential tool in developing this theory is the Laplacian on a metrized graph $\Gamma$. Berkovich space $\pberk$ has a natural path distance function $\sigma$ which is referred to as the `big metric' which is invariant under the action of $\SL_2$. A connected graph $\Gamma\subseteq \pberk $ is said to be finite if any vertex has finitely many branch points, and any two points lie at finite $\sigma$-distance from one another. 

The metric $\sigma$ induces a measure-valued Laplacian $\Delta_\Gamma$ on the space of functions which are `of bounded differential variation' on $\Gamma$; continuous piecewise affine functions -- such as $\ordRes_{\varphi}$ -- are examples of such functions. There is a natural extension of $\Delta$ to functions defined on all of $\pberk$ by taking inverse limits of the $\Delta_\Gamma$; see \cite{BakerRumely} Chapters 3 and 5. 

Let $\Gamma_{\textrm{FR}}$ be the graph in $\pberk$ spanned by the type I fixed points and the type II repelling fixed points of $\psi_m$, and let $\Gamma = \widehat{\Gamma_{\textrm{FR}}}$ be a truncation of this tree obtained by removing segments near type I fixed points\footnote{See \cite{Rumely} for the explicit truncation. Since the type I fixed points are at infinte distance from points in $\hberk$, we need to remove them and segments leading up to them in order to apply the Berkovich Laplacian.}. Then $\Gamma$ is a connected finite subgraph of $\pberk$. Rumely \cite{Rumely} has shown that  the Laplacian of $\ordRes_{\psi_m}$ on $\Gamma$ can be given $$\Delta_{\widehat{\Gamma_{FR}}} \ordRes_{\psi_m} = 2(d^2-d) (\mu_{\textrm{Br}, \Gamma} - \nu_m)\ , $$ Here, $\mu_{\textrm{Br},\Gamma}$ is the branching measure on the finite graph $\Gamma$ given by $$\mu_{\textrm{Br}, \Gamma} = \frac{1}{2}\sum_{P\in \Gamma} (2-v_\Gamma(P))\delta_P\ ,$$ where $v_\Gamma(P)$ is the valence of $P$ in $\Gamma$ and $\delta_P$ is the Dirac point-mass at $P$. The measure $\nu_m$ is the crucial measure associated to $\psi_m$; it is a discrete probability measure with finite support in $\Gamma = \widehat{\Gamma_{\textrm{FR}}}$, and as such can be written $$\nu_m = \frac{1}{d-1} \sum_{P\in \pberk} w_{\psi_m}(P) \delta_P$$ for explicit weight functions given in \cite{Rumely} Definition 8. 

\begin{prop}\label{prop:MRL}
The weight $w_{\psi_m}$ of each type II fixed point in the Julia set $[\zetaG, \zeta_{0, |q|^{1/2}}]$ can be given explicitly by
$$
w_\varphi(\zeta_i) = \left\{\begin{matrix}
m-1\ ,& i\textrm{ odd}\\
m+1\ , & i \textrm{ even and } i\neq m\\
m\ , & i=m\textrm{ is even}\end{matrix}\right.
$$
These are the only points in $\pberk$ with $w_{\psi_m}(\zeta)>0$. Consequently, the minimal resultant locus of $\psi_m$ consists of a single point and is given by
$$
\MinResLoc(\psi_m) = \left\{\begin{matrix}
\left\{\zeta_{0, |q|^{1/4}}\right\}\ , & m\textrm{ is odd}\\
\left\{\zeta_{0, |q|^{\frac{m}{4(m+1)}}}\right\}\ , &m\equiv 0 \mod 4\\
\left\{\zeta_{0, |q|^{\frac{m+2}{4(m+1)}}}\right\}\ ,&m\equiv 2 \mod 4\ .
\end{matrix}\right.
$$
\end{prop}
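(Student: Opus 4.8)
The plan is to compute Rumely's crucial measure $\nu_m$ for $\psi_m$ by evaluating the weight $w_{\psi_m}(\zeta)$ at each of the type II fixed points $\zeta_1,\dots,\zeta_m$, then to use a count of the total mass to force the weight to vanish everywhere else, and finally to read off $\MinResLoc(\psi_m)$ from $\nu_m$. Write $d=\deg\psi_m=m^2$, so that $d-1=m^2-1$, which is the total mass of any crucial measure. Recall that the weight $w_{\psi_m}(\zeta)$ at a type II point (\cite{Rumely}, Definition~8) is determined by the local data of $\psi_m$ at $\zeta$: the residue map $\widetilde{\psi_m}$ in a coordinate carrying $\zeta$ to $\zetaG$, the multiset of holes of that reduction, and the directional multiplicities of the induced tangent map. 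So the first task is to extract this data at each $\zeta_i$ from the explicit picture of Lemma~\ref{lem:fixedpoints}.

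I would compute the residue maps via the monomial lift. The commutative diagram of Lemma~\ref{lem:fixedpoints} lifts $\psi_m$ through $x(w)$ to the map $w\mapsto w^m$ on $\aberk\setminus\{0\}$, and the valuation estimates for $x_1,x_{2,m},x_{3,m},x_{4,m}$ from the proof of Lemma~\ref{lem:fixedpoints} show that at an interior type II point $\zeta_{0,r}$ of the segment $x(w)$ has a degree-one reduction — the identity for $r\in(|q|^{1/2},1)$ and the inversion $\bar w\mapsto\bar w^{-1}$ for $r\in(|q|,|q|^{1/2})$. Pushing $w\mapsto w^m$ through the diagram then shows that the residue map of $\psi_m$ at $\zeta_i$ is of monomial type $\bar z\mapsto c\,\bar z^{\pm m}$, the sign being the orientation of the tent-map piece $\mathcal{I}_i$ at $\zeta_i$ (so $+m$ for $i$ odd, $-m$ for $i$ even), with gcd of degree $m^2-m$; the two signs differ in how the holes sit relative to the two spine directions $\bar z=0,\infty$. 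Feeding each case into Rumely's weight formula gives $w_{\psi_m}(\zeta_i)=m-1$ for $i$ odd and $w_{\psi_m}(\zeta_i)=m+1$ for $i$ even with $i\neq m$; the case $i=m$ with $m$ even is exceptional, because the segment terminates at $\zeta_{0,|q|^{1/2}}$ just past $\zeta_m$ (a point $\psi_m$ sends to $\zetaG$), so one spine direction at $\zeta_m$ carries one fewer hole and the weight drops to $m$. Summing, $\sum_{i=1}^m w_{\psi_m}(\zeta_i)=m^2-1=d-1$ in all cases; since weights are nonnegative and the total mass is $d-1$, this forces $w_{\psi_m}(\zeta)=0$ at every type II point $\zeta\notin\{\zeta_1,\dots,\zeta_m\}$, which is the vanishing assertion.

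Finally I would locate $\MinResLoc(\psi_m)$. With $\nu_m=\tfrac{1}{d-1}\sum_i w_{\psi_m}(\zeta_i)\,\delta_{\zeta_i}$ now explicit, $\ordRes_{\psi_m}$ is convex up and piecewise affine on the Julia segment $[\zetaG,\zeta_{0,|q|^{1/2}}]$, and by the Laplacian identity $\Delta\ordRes_{\psi_m}=2(d^2-d)(\mu_{\mathrm{Br},\Gamma}-\nu_m)$ recalled above (see \cite{Rumely}) its minimum there is the barycenter of $\nu_m$, i.e. the unique $\zeta$ on the segment such that every complementary component has $\nu_m$-mass $\le\tfrac12$. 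Ordering the $\zeta_i$ along the segment, this is the first $\zeta_i$ at which the partial sums $\sum_{j\le i}w_{\psi_m}(\zeta_j)$ reach $\tfrac12(m^2-1)$; since $\nu_m$ has no atoms strictly between consecutive $\zeta_i$, the minimizer is a single point, matching the stable-reduction statement of Proposition~\ref{prop:minresss}(B). Parameterize the segment by $\rho\in[0,\tfrac12]$ via $\zeta_{0,|q|^\rho}\leftrightarrow\rho$, so $\rho_i=i/(2(m+1))$ for $i$ even and $\rho_i=(i-1)/(2(m-1))$ for $i$ odd. A short computation of the crossing in each residue class of $m$ mod $4$ — the asymmetry coming exactly from the exceptional weight $m$ at $i=m$ when $m$ is even — then yields $\rho=\tfrac14$ for $m$ odd, $\rho=\tfrac{m}{4(m+1)}$ for $m\equiv0\pmod 4$, and $\rho=\tfrac{m+2}{4(m+1)}$ for $m\equiv2\pmod 4$, which is the claimed $\MinResLoc(\psi_m)$.

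The hard part will be the local reduction computations at the $\zeta_i$: since $\psi_m$ is only given implicitly through the Tate parameterization, extracting the residue map, the exact multiplicities of the holes, and the directional data required by Rumely's formula demands careful bookkeeping with the two-sheeted map $x(w)$ — in particular with the orientation flips of the tent map, which are what produce $m-1$ versus $m+1$ — together with genuinely separate treatment of the endpoint $\zeta_m$ when $m$ is even and of $\zetaG=\zeta_1$, where $x$ itself has degree-two reduction. Everything downstream (the mass count forcing vanishing, and the arithmetic pinning down the barycenter) is then routine.
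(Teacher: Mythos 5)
Your overall architecture is close to the paper's: the second half of your argument (locate $\MinResLoc(\psi_m)$ as the barycenter of the crucial measure $\nu_m$ and pin it down by partial sums of the weights in each class of $m \bmod 4$) is exactly what the paper does, via Rumely's theorem that $\MinResLoc$ is the barycenter of $\nu_\varphi$. Where you genuinely differ is in the first half: the paper does not derive the weights at all --- it cites \cite{KJThesis}, Example~2, both for the values $w_{\psi_m}(\zeta_i)$ and for the fact that no other point carries weight --- whereas you propose to compute them from Rumely's Definition~8 using the Tate parameterization, and to get the support statement from a total-mass count. That mass count is a nice simplification and is valid: the weights are nonnegative and sum to $d-1=m^2-1$ in Rumely's theory, and your claimed values do sum to $m^2-1$ in every parity case, so vanishing elsewhere follows at once. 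Your identification of the local data is also correct as far as it goes: the local degree of $\psi_m$ at an interior Julia point is $m$ (chain rule through $x(w)$ and $w\mapsto w^m$), so the reduction at $\zeta_i$ has degree $m$ with holes of total depth $m^2-m$, and the spine directions are fixed for $i$ odd and swapped for $i$ even.

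The gap is the step you label as the hard part and then do not carry out: ``feeding each case into Rumely's weight formula gives $m-1$, $m+1$, $m$.'' Rumely's Definition~8 requires more than the degree of the reduction and the total hole depth; it needs the actual configuration of fixed tangent directions together with which directions carry surplus (i.e., where the $m^2-m$ units of hole depth sit and with what multiplicities), and none of that is determined in your sketch. In particular, the exceptional value $w_{\psi_m}(\zeta_m)=m$ for $m$ even is explained only by the heuristic that the endpoint $\zeta_{0,|q|^{1/2}}$ maps to $\zetaG$, so ``one spine direction carries one fewer hole''; but the weight at $\zeta_m$ depends only on the reduction data at $\zeta_m$ itself, so you must exhibit concretely how the hole/fixed-direction count at $\zeta_m$ differs from that at the other even-index points --- as written this is an unverified guess, and it is precisely the place where the asymmetry that drives the even-$m$ answer $\zeta_{0,|q|^{m/(4(m+1))}}$ versus $\zeta_{0,|q|^{(m+2)/(4(m+1))}}$ originates. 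A second, smaller point: uniqueness of the minimizing point does not follow merely from ``$\nu_m$ has no atoms strictly between consecutive $\zeta_i$''; a point strictly between two atoms lies in the barycenter exactly when the partial sum up to the nearer atom equals $\tfrac{1}{2}(m^2-1)$, so you must check (as the paper does, by showing that adding $w_{\psi_m}(\zeta_*)$ to either side overshoots the threshold) that no partial sum hits exactly half. That check is routine arithmetic, but it belongs in the proof.
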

\begin{proof}
The weights of the fixed points $\zeta_i$ were computed by the first author in \cite{KJThesis} Example 2; note that the indices here are shifted by 1 from the ones there. There, it was also shown that these are the only points which receive weight. 

To determine the minimal resultant locus from the weights, we rely on the fact that the minimal resultant locus is the barycenter of the crucial measures (\cite{Rumely} Theorem 7.1). In our context, this says that

 \begin{center}
  ``$\zeta$ is in the minimal resultant locus if and only if the sum of the weights of $\zeta_i$ lying closer to $\zetaG$ than $\zeta$ is at most $\frac{m^2-1}{2}$, and the sum of the weights of $\zeta_i$ lying farther from $\zetaG$ than $\zeta$ is also at most $\frac{m^2-1}{2}$.''
\end{center} 

First suppose that $m$ is odd; we claim that $\zeta_{(m+1)/2} = \zeta_{0, |q|^{1/4}}$ is the unique point of the minimal resultant locus. There are $(m-1)/2$ weighted points which lie nearer to $\zetaG$ than $\zeta_{(m+1)/2}$, while there are $(m-1)/2$ weighted points which lie farther from $\zetaG$ than $\zeta_{(m+1)/2}$.
\begin{itemize}
\item[Case i:] If $m\equiv 1 \mod 4$, then exactly half of the indices $i = 1, ..., (m-1)/2$ are odd while exactly half are even. Therefore, the total contribution of weight from points $\zeta_i$ lying nearer to $\zetaG$ than $\zeta_{(m+1)/2}$ is  $$\frac{m-1}{4} \cdot (m-1) + \frac{m-1}{4} \cdot (m+1) = \frac{m(m-1)}{2} < \frac{m^2-1}{2}\ .$$ Similarly, the total weight of points lying farther from $\zetaG$ is also $\frac{m(m-1)}{2} < \frac{m^2-1}{2}$. Consequently, the point $\zeta_{(m+1)/2}$ is in the barycenter of the crucial measure, and hence is in the minimal resultant locus. 

If either of these sums is increased by $w_{\psi_m}(\zeta_{(m+1)/2}) = m-1$, it will exceed the threshold $\frac{m^2-1}{2}$. Geometrically, this says that if we deviate from $\zeta_{(m+1)/2}$ in any direction, we will not be in the barycenter; hence $\zeta_{(m+1)/2}$ is the unique point in the barycenter.

\item[ Case ii:] If $m\equiv 3\mod 4$, the argument is essentially the same, however the counts are slightly different. Here, there are $(m+1)/4$ indices among $i=1, ..., (m+1)/2$ which are odd, while there are $(m-3)/4$ such indices which are even; in total, the mass of the points lying nearer to $\zetaG$ is $$\frac{m+1}{4} \cdot (m-1) + \frac{m-3}{4} \cdot (m+1) = \frac{m^2-m-2}{2}< \frac{m^2-1}{2} \ .$$ Similarly, the total mass of points lying farther from $\zetaG$ than $\zeta_{(m+1)/2}$ is $\frac{m^2-m-2}{2} < \frac{m^2-1}{2}$; thus $\zeta_{(m+1)/2}$ is in the minimal resultant locus. However, if either of these are increased by $w_{\psi_m}(\zeta_{(m+1)/2}) = m+1$, then the total weight exceeds the threshold $\frac{m^2-1}{2}$. Consequently, $\zeta_{(m+1)/2}$ is the unique point in the minimal resultant locus. 
\end{itemize}

We now consider the case that $m$ is even, which we again partition into two cases depending on $m\mod 4$:

\begin{itemize}
\item[Case iii:] If $m\equiv 0 \mod 4$, and consider the point $\zeta_{m/2} = \zeta_{0, |q|^{\frac{m-1}{4(m+1)}}}$. Among the indices $i=1, 2, ..., (m/2) -1$, there are $m/4$ odd indices, which correspond to points with weight $m-1$, and there are $(m/4) - 1$ even indices which correspond to points with weight $m+1$. Thus, in total, among the weighted points lying nearer to $\zetaG$ than $\zeta_{m/2}$ we find a total mass of $$(m/4)(m-1) + ((m/4)-1)(m+1) = \frac{m^2-2m-2}{2} < \frac{m^2-1}{2}\ .$$ Among the indices $i=(m/2) + 1 ,..., m$, we find that there are $(m/4)$ odd indices and $(m/4)$ even indices; keeping in mind that when $i=m$ the weight is $w_{\psi_m}(\zeta_m) = m$, we find that the total weight among the $\zeta_i$ with $i=(m/2) + 1, ..., m$ is $$(m/4)(m-1) + ((m/4)-1)(m+1) + m = \frac{m^2-2}{2} < \frac{m^2-1}{2}\ .$$ Consequently, $\zeta_{m/2}$ lies in the barycenter of the crucial measures, and hence in the minimal resultant locus. It is the unique point, since if either sums of weights given above is incremented by $w_{\psi_m}(\zeta_{m/2}) = m+1$, then the total weight is greater than $(m^2-1)/2$. 

\item [Case iv:] Finally, if $m\equiv 2\mod 4$, we consider the point $\zeta_{(m/2)+1} = \zeta_{0, |q|^{\frac{m+2}{4(m+1)}}}$. Note that among the indices $i=1, 2, ..., (m/2)$, exactly $(m+2)/4$ are odd while $(m-2)/4$ are even. Consequently, the total weight among the corresponding $\zeta_i$ is $$\frac{m+2}{4}(m-1) + \frac{m-2}{4} (m+1) = \frac{m^2-2}{2} < \frac{m^2-1}{2}\ .$$ Among the indices $i=(m/2)+2, ..., m$, exactly $(m-2)/4$ are even, while exactly $(m-2)/4$ are odd. Again keeping in mind that the last interval has weight $m$, we find that the total weight among these $\zeta_i$ is $$\frac{m-2}{4} (m-1) + \left(\frac{m-2}{4} - 1\right) (m+1) + m = \frac{m^2-2m-2}{2} < \frac{m^2-1}{2}\ .$$ Consequently, $\zeta_{(m/2)+1}$ is in the minimal resultant locus, and arguing as in the previous cases, it is the unique such point. 
\end{itemize}

\end{proof}

Our next task is to determine the value of $\ordRes_{\psi_m}$ at the unique point in $\MinResLoc(\psi_m)$. A priori, this requires that we first write an explicit formula for $\psi_m$, then conjugate by the appropriate map, normalize the coefficients, and compute the resultant. There is a more combinatorial approach using Arakelov-Green's functions and Rumely's crucial measures.

Recall that the Arakelov-Green's function of a probability measure $\nu$ can be expressed
\[
g_\nu(x,y) = \int -\log \delta(x,y)_\zeta d\nu(\zeta) + C
\] for an appropriately chosen normalization constant. The Hsia kernel satisfies several change-of-variables formulas (see \cite{BakerRumely} Chapter 4); we recall two of these now. For a fixed $\zeta\in \hberk$,
\begin{align}
\delta(x,y)_\zeta &= \frac{\delta(x,y)_{\zetaG}}{\delta(x,\zeta)_{\zetaG}\cdot\delta(y, \zeta)_{\zetaG}}\label{eq:HsiatoGauss}\\
\delta(x,y)_{\zetaG} &= \frac{\delta(x,y)_{\infty}}{\delta(x,\zetaG)_\infty \cdot \delta(y, \zetaG)_\infty}\ ,\ \forall x,y\neq \infty\ .\label{eq:Hsiatoinfty}
\end{align} These formulas allow us to write the Arakelov-Green's function in several different ways; for example, if $\nu$ is a measure which does not charge $\infty$, then
\begin{align}
g_\nu(x,y) &= \int_{\pberk} -\log \frac{\delta(x,y)_\infty}{\delta(x,\zeta)_\infty \cdot \delta(y, \zeta)_\infty} +C\nonumber \\
& = -\log  \delta(x,y)_\infty - u_\nu(x,\infty) - u_\nu(y, \infty) + C \label{eq:AGalt}\ ,
\end{align} where $u_\nu(x,\infty) = \int_{\pberk} -\log \delta(x,\xi)_\infty d\nu(\xi)$ is a potential function for the measure $\nu$; in particular, it satisfies $\Delta u_\nu(\cdot, \infty) = \nu - \delta_\infty$.

We define the un-normalized, diagonal Arakelov-Green's function attached to the crucial measures to be
\[
\hat{g}_m(x) := \frac{1}{m^2-1}\sum_{i=1}^m w_{\psi_m}(\zeta_i) \cdot (-\log \delta(x,x)_{\zeta_i})
\] which agrees with $g_{\nu_{\psi_m}}(x,x)$ up to the additive constant $C$. We record for later use that by applying (\ref{eq:HsiatoGauss}), this can be rewritten as
\begin{equation}\label{eq:ghatchanged}
\hat{g}_m(x) := -\log \delta(x,x)_{\zetaG} + \frac{2}{m^2 -1}\sum_{i=1}^m w_{\psi_m}(\zeta_i)\cdot\log \delta(x, \zeta_i)_{\zetaG}\ .
\end{equation}

\begin{lem}\label{lem:oRGreen}
Let $\Gamma= \widehat{\Gamma_{\textrm{FR}}}$ be as above. Then $$\Delta_\Gamma \hat{g}_m(x) =  2(\mu_{\textrm{Br}, \Gamma} - \nu_m)\ .$$ Consequently, for $\zeta\in \Gamma$ we have $$\ordRes_{\psi_m}(\zeta) = m^2(m^2-1)\hat{g}_m(x) + \ordRes_{\psi_m}(\zetaG)\ .$$
\end{lem}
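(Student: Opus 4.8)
The plan is to first establish the Laplacian formula $\Delta_\Gamma \hat g_m = 2(\mu_{\mathrm{Br},\Gamma} - \nu_m)$ by a direct computation from the representation (\ref{eq:ghatchanged}), and then to deduce the $\ordRes$ formula by comparing Laplacians with the known expression $\Delta_{\widehat{\Gamma_{FR}}}\ordRes_{\psi_m} = 2(d^2-d)(\mu_{\mathrm{Br},\Gamma} - \nu_m)$ recalled just above the lemma. For the first part, I would recall the standard fact (see \cite{BakerRumely} Chapters 4--5) that for a fixed point $\zeta_0\in\hberk$, the function $x\mapsto -\log\delta(x,\zeta_0)_{\zetaG}$ has Laplacian $\delta_{\zeta_0} - \delta_{\zetaG}$ on any finite subgraph containing $\zeta_0$ and $\zetaG$, while the diagonal term $x \mapsto -\log\delta(x,x)_{\zetaG}$ contributes the branching/curvature term: its Laplacian on $\Gamma$ is $2\mu_{\mathrm{Br},\Gamma} - 2\delta_{\zetaG}$ (this is the ``diagonal Hsia kernel'' computation; on a segment it records the valence defect at branch and endpoints). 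Applying $\Delta_\Gamma$ to (\ref{eq:ghatchanged}) and using linearity then gives
\[
\Delta_\Gamma \hat g_m = \bigl(2\mu_{\mathrm{Br},\Gamma} - 2\delta_{\zetaG}\bigr) + \frac{2}{m^2-1}\sum_{i=1}^m w_{\psi_m}(\zeta_i)\bigl(\delta_{\zetaG} - \delta_{\zeta_i}\bigr),
\]
where I have taken the Laplacian of $+\log\delta(x,\zeta_i)_{\zetaG} = -(-\log\delta(x,\zeta_i)_{\zetaG})$, which has Laplacian $\delta_{\zetaG} - \delta_{\zeta_i}$. Since $\nu_m = \frac{1}{m^2-1}\sum_i w_{\psi_m}(\zeta_i)\delta_{\zeta_i}$ is a probability measure, the coefficients $\frac{w_{\psi_m}(\zeta_i)}{m^2-1}$ sum to $1$, so the $\delta_{\zetaG}$ terms collect to $-2\delta_{\zetaG} + 2\delta_{\zetaG} = 0$, leaving exactly $\Delta_\Gamma \hat g_m = 2\mu_{\mathrm{Br},\Gamma} - 2\nu_m$ as claimed. (Here $d = m^2$ is the degree of $\psi_m$, so I should be careful that $\hat g_m$ is defined with the $\frac{1}{m^2-1}$ normalization matching $\nu_m$, not the $\frac{1}{d(d-1)}$ normalization.)

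For the second part, compare $\Delta_\Gamma\bigl(m^2(m^2-1)\hat g_m\bigr) = 2m^2(m^2-1)(\mu_{\mathrm{Br},\Gamma} - \nu_m) = 2(d^2-d)(\mu_{\mathrm{Br},\Gamma}-\nu_m) = \Delta_\Gamma \ordRes_{\psi_m}$, using $d = m^2$. Thus $\ordRes_{\psi_m} - m^2(m^2-1)\hat g_m$ is a function on $\Gamma$ with zero Laplacian; since $\Gamma$ is connected and finite, a function on it with vanishing Laplacian is constant (the Laplacian kernel on a connected metrized graph consists of constants — cf.\ \cite{BakerRumely} Chapter 3). Evaluating this constant at $\zetaG$: from (\ref{eq:ghatchanged}) one has $\hat g_m(\zetaG) = -\log\delta(\zetaG,\zetaG)_{\zetaG} + \frac{2}{m^2-1}\sum_i w_{\psi_m}(\zeta_i)\log\delta(\zetaG,\zeta_i)_{\zetaG}$; the first term vanishes by the normalization of the Hsia kernel at its own basepoint ($\delta(\zetaG,\zetaG)_{\zetaG} = 1$, equivalently $\mathrm{diam}_{\zetaG}(\zetaG) = 1$ in the usual conventions), and the second term vanishes because $\delta(\zetaG,\zeta_i)_{\zetaG} = 1$ for every $\zeta_i$ (all the $\zeta_i$ lie in the branch off $\zetaG$ along $[0,\infty]$, so their $\zetaG$-relative distance to $\zetaG$ is $1$). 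Hence $\hat g_m(\zetaG) = 0$, the constant equals $\ordRes_{\psi_m}(\zetaG)$, and the stated identity follows for all $\zeta\in\Gamma$.

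The main obstacle I anticipate is getting the diagonal-Hsia-kernel Laplacian computation right — i.e.\ verifying cleanly that $\Delta_\Gamma\bigl(-\log\delta(x,x)_{\zetaG}\bigr) = 2\mu_{\mathrm{Br},\Gamma} - 2\delta_{\zetaG}$, since this is where the branching measure enters and where sign/normalization errors are easy to make. One safe route is to reduce to the potential-theoretic identity (\ref{eq:AGalt}): write $-\log\delta(x,x)_{\zetaG}$ in terms of $-\log\delta(x,x)_\infty$ and the potential $u_{\delta_{\zetaG}}(\cdot,\infty)$ via (\ref{eq:Hsiatoinfty}), use the known fact that $\Delta\bigl(-\log\delta(x,x)_\infty\bigr)$ is (twice) the branching measure plus a boundary correction at $\infty$, and assemble terms on the finite graph $\Gamma$; alternatively one can verify it directly on each edge of the segment $[\zetaG,\zeta_{0,|q|^{1/2}}]$ using that $x\mapsto -\log\delta(x,x)_{\zetaG}$ is piecewise affine with slope determined by the path distance, and read off the second-difference (curvature) at each vertex. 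Either way, once that single computation is pinned down the rest is bookkeeping with the probability-measure normalization and the evaluation at $\zetaG$.
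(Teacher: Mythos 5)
Your proof is correct, and the overall architecture matches the paper's: compute $\Delta_\Gamma \hat{g}_m$ termwise by piecewise-affine Laplacian calculus, compare with Rumely's formula $\Delta_\Gamma \ordRes_{\psi_m} = 2(d^2-d)(\mu_{\mathrm{Br},\Gamma}-\nu_m)$ with $d=m^2$, observe the difference is harmonic hence constant on the connected finite graph, and pin the constant down at $\zetaG$ using $\delta(\zetaG,\cdot)_{\zetaG}\equiv 1$. Where you diverge is the first step: the paper does not work from (\ref{eq:ghatchanged}) directly, but instead writes $g_{\nu_m}(x,x) = -\log\delta(x,x)_\infty - 2u_{\nu_m}(x,\infty) + C$ via (\ref{eq:AGalt}), applies a cited lemma giving $\Delta_\Gamma(-\log\delta(x,x)_\infty) = 2\mu_{\mathrm{Br},\Gamma} - 2\delta_{r_\Gamma(\infty)}$, and uses the potential identity $\Delta_\Gamma u_{\nu_m}(\cdot,\infty) = \nu_m - \delta_{r_\Gamma(\infty)}$, so the point masses at $r_\Gamma(\infty)$ cancel. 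In your version the base point is $\zetaG$ throughout, and the cancellation of $\pm 2\delta_{\zetaG}$ comes instead from the fact that the weights $w_{\psi_m}(\zeta_i)$ sum to $m^2-1$ (i.e.\ that $\nu_m$ is a probability measure); this plays exactly the role the potential-function normalization plays in the paper. Your one unproved ingredient, $\Delta_\Gamma(-\log\delta(x,x)_{\zetaG}) = 2\mu_{\mathrm{Br},\Gamma} - 2\delta_{\zetaG}$, is indeed the delicate point you flag, but it follows from the paper's own cited fact together with (\ref{eq:Hsiatoinfty}): since $\zeta_1=\zetaG$ carries positive weight, $\zetaG\in\Gamma$ and $r_\Gamma(\zetaG)=\zetaG$, so the change of base point replaces $\delta_{r_\Gamma(\infty)}$ by $\delta_{\zetaG}$ as you expect (alternatively, your slope computation on each edge works, since $-\log\delta(x,x)_{\zetaG}$ restricted to $\Gamma$ is the path distance to $\zetaG$). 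The two routes are of essentially equal length; yours has the small advantage of avoiding the auxiliary potential $u_{\nu_m}(\cdot,\infty)$ and the retraction point $r_\Gamma(\infty)$, while the paper's has the advantage of quoting the $\infty$-based diagonal Laplacian verbatim from the literature rather than re-deriving its $\zetaG$-based analogue.
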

\begin{proof}
Notice that $g_{\nu_m}(x,x) = \hat{g}_m(x) + C$; consequently it will be enough to compute $\Delta_\Gamma g_{\nu_m}(\cdot, \cdot)$. By (\ref{eq:AGalt}), we may rewrite $g_{\nu_m}(x,x)$ as $$g_{\nu_m}(x,x) = -\log \delta(x,x)_\infty - 2u_{\nu_m}(x,\infty) + C\ .$$ We now compute the Laplacian of each term appearing in this expression: by \cite{KJThesis} Lemma 3.12, we have 
\begin{equation}\label{eq:Lapdeltadiag}
\Delta_\Gamma (-\log \delta(x,x)_\infty) = 2\mu_{\textrm{Br}, \Gamma} - 2\delta_{r_\Gamma(\infty)}\ ,
\end{equation} where $r_{\Gamma}:\pberk \to \Gamma$ is the following retraction map: given a point $\zeta\in \Gamma$, the point $r_\Gamma(x)$ is the first point along the segment $[x, \zeta]$ lying in $\Gamma$. This is well defined independent of $\zeta$ since $\pberk$ is uniquely path connected and $\Gamma$ is connected.

Since $u_{\nu_m}(\cdot, \infty)$ is a potential for $\nu_m$, we find: 
\begin{equation}\label{eq:Lappotential}
\Delta_\Gamma u_{\nu_m}(\cdot, \infty) = \nu_m - \delta_{r_\Gamma(\infty)}
\end{equation} and (\ref{eq:Lappotential}) gives $$\Delta_\Gamma \hat{g}_m = \Delta_\Gamma g_{\nu_m}(\cdot, \cdot) = 2(\mu_{\textrm{Br}, \Gamma} - \nu_m)\ .$$

In order to establish the last assertion of the lemma, recall that Rumely has shown (\cite{Rumely} Corollary 6.5) that $$\Delta_\Gamma \ordRes_{\psi_m} = 2(d^2 - d) (\mu_{\textrm{Br}, \Gamma} - \nu_m)\ .$$ Consequently, since $d=m^2$ for Latt\`es maps, we find \begin{equation}\label{eq:ordresisghat}
\ordRes_{\psi_m}(x) =m^2(m^2-1) \hat{g}_m(x) + C
\end{equation} for all $x\in \Gamma = \widehat{\Gamma_{\textrm{FR}}}$. To determine the constant, we will explicitly compute both sides at $\zetaG$; note that $\zetaG\in \Gamma=\widehat{\Gamma_{\textrm{FR}}}$, since $w_{\psi_m}(\zetaG) = w_{\psi_m}(\zeta_1) = m-1$. 

To determine $\hat{g}_m(\zetaG)$ first note that the Hsia kernel relative to $\zetaG$ has the following geometric interpretation: given two points $x,y\in \pberk$, consider the paths $[x, \zetaG], [y, \zetaG]$ connecting $x$ and $y$ to $\zetaG$. There is a unique point $w$ which lies in both paths and which is farthest from $\zetaG$. Then $$\delta(x,y)_{\zetaG} = q_v ^{-\lambda(w, \zetaG)}\ , $$ where $\lambda$ is the logarithmic path distance on $\pberk$. Therefore, $\hat{g}_m(\zetaG)$ can be evaluated using the expression in (\ref{eq:ghatchanged}):
\begin{align*}
\hat{g}_m(\zetaG) &= -\log \delta(\zetaG, \zetaG)_{\zetaG} + \frac{2}{m^2-1} \sum_{i=1}^m w_{\psi_m}(\zeta_i) \log \delta(\zetaG, \zeta_i)_{\zetaG}\\ 
& = 0\ ,
\end{align*} where the last equality follows from the fact that $\delta(\zetaG, \cdot)_{\zetaG} \equiv 1$. Therefore, $C=\ordRes_{\psi_m}(\zetaG)$, which completes the proof. 
\end{proof}

Therefore, in order to compute $\ordRes_{\psi_m}$ at the unique point of the minimal resultant locus, it suffices to (i) compute the value of $\hat{g}_m(\zeta)$ at the same point, and (ii) determine the normalized resultant of $\psi_m$.

\begin{prop}\label{prop:minGreenVal}
Let $\zeta_*$ be the unique point in $\MinResLoc(\psi_m)$. Then $$\hat{g}_m(\zeta_*) =\frac{1}{8}\log|q| \cdot \left\{ \begin{matrix}1\ , & m \textrm{ odd}\\
\frac{m^3+m^2-2m}{(m+1)(m^2-1)}\ , & m\textrm{ even}\ .
\end{matrix}\right.
$$

\end{prop}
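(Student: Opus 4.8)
The plan is to use the explicit formula (\ref{eq:ghatchanged}) for $\hat g_m$, which expresses $\hat g_m(\zeta_*)$ purely in terms of the logarithmic Hsia kernel $\delta(\cdot,\cdot)_{\zetaG}$ evaluated at pairs among the fixed points $\zeta_i$ on the segment $[\zetaG,\zeta_{0,|q|^{1/2}}]$. Since $\zeta_* = \zeta_{(m+1)/2}$ (for $m$ odd) or $\zeta_{m/2}$, $\zeta_{(m/2)+1}$ (for $m$ even) lies on this segment, and all the $\zeta_i$ lie on this segment as well, the geometric interpretation of $\delta(x,y)_{\zetaG}$ recalled in the proof of Lemma~\ref{lem:oRGreen} reduces everything to one-dimensional path-distance bookkeeping: for $x,y$ both on $[\zetaG,\zeta_{0,|q|^{1/2}}]$, the meeting point $w$ farthest from $\zetaG$ is simply $\min(x,y)$ in the natural linear order on the segment, so $-\log\delta(x,y)_{\zetaG} = \lambda(\min(x,y),\zetaG)$ and $-\log\delta(x,x)_{\zetaG} = \lambda(x,\zetaG)$.

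First I would fix the parametrization: using Lemma~\ref{lem:fixedpoints}, record the logarithmic distance $\lambda(\zeta_i,\zetaG)$ from $\zetaG$ to each $\zeta_i$, namely $-\tfrac{i}{2(m+1)}\log|q|$ for $i$ even and $-\tfrac{i-1}{2(m-1)}\log|q|$ for $i$ odd (up to the sign convention for $\log$, which I will carry consistently so that the final answer comes out with the stated factor $\tfrac18\log|q|$, which is negative). Then I would plug $x=\zeta_*$ into (\ref{eq:ghatchanged}): the first term is $\lambda(\zeta_*,\zetaG)$, and in the sum $\sum_i w_{\psi_m}(\zeta_i)\log\delta(\zeta_*,\zeta_i)_{\zetaG}$ each term is $-w_{\psi_m}(\zeta_i)\cdot\lambda(\min(\zeta_*,\zeta_i),\zetaG)$, i.e. $-w_{\psi_m}(\zeta_i)\lambda(\zeta_i,\zetaG)$ when $\zeta_i$ is closer to $\zetaG$ than $\zeta_*$ (equivalently $i < $ the index of $\zeta_*$, with a boundary case at equality), and $-w_{\psi_m}(\zeta_i)\lambda(\zeta_*,\zetaG)$ otherwise. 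The weights $w_{\psi_m}(\zeta_i)$ are given by Proposition~\ref{prop:MRL}: $m-1$ for $i$ odd, $m+1$ for $i$ even and $i\neq m$, and $m$ for $i=m$ even.

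The remaining work is a finite sum evaluation, split into the same four congruence cases mod $4$ as in the proof of Proposition~\ref{prop:MRL}. In each case I would (i) separate the $\zeta_i$ with $i$ smaller than the index of $\zeta_*$ from those with $i$ larger, (ii) substitute the explicit $\lambda(\zeta_i,\zetaG)$ and $w_{\psi_m}(\zeta_i)$ values, and (iii) collect the arithmetic/geometric-type sums $\sum i$ over the relevant parities and ranges. For $m$ odd the symmetry of the configuration about $\zeta_*=\zeta_{(m+1)/2}$ makes the bookkeeping cleaner and should collapse to the clean answer $\tfrac18\log|q|$; for $m$ even the two sub-sums are unequal and one expects the messier rational function $\tfrac{m^3+m^2-2m}{(m+1)(m^2-1)}$ to emerge after combining terms over a common denominator.

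The main obstacle is purely organizational rather than conceptual: keeping the index ranges, parities, and the special weight at $i=m$ straight across the four cases, and ensuring the boundary term (where $\zeta_i=\zeta_*$ contributes $\lambda(\zeta_*,\zetaG)$ rather than a strict inequality) is handled consistently. A useful sanity check at the end is to verify the odd-$m$ formula against the even-$m$ formula's behavior and against the known value $\hat g_m(\zetaG)=0$ from Lemma~\ref{lem:oRGreen}, and to confirm that the sign of $\log|q|<0$ makes $\hat g_m(\zeta_*)<0=\hat g_m(\zetaG)$, consistent with $\zeta_*$ being where $\ordRes_{\psi_m}$ — and hence $\hat g_m$, by Lemma~\ref{lem:oRGreen} — is minimized.
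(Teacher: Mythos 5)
Your plan is exactly the paper's proof: it evaluates $\hat{g}_m(\zeta_*)$ via (\ref{eq:ghatchanged}), converts each $\log\delta(\zeta_*,\zeta_i)_{\zetaG}$ into a path distance ($-\sigma(\zeta_i,\zetaG)$ for $i<*$ and $-\sigma(\zeta_*,\zetaG)$ for $i\geq *$), and then combines the weights from Proposition~\ref{prop:MRL} with the explicit radii from Lemma~\ref{lem:fixedpoints} in the same four cases mod $4$. The remaining work is the same bookkeeping the paper carries out, so your approach is correct and essentially identical.
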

\begin{proof}
In order to prove this lemma, we will use the expression for $\hat{g}_m$ given in (\ref{eq:ghatchanged}):
$$
\hat{g}_m(x) = -\log \delta(x,x)_{\zetaG} + \frac{2}{m^2-1}\sum_{i=1}^m w_{\psi_m}(\zeta_i) \log \delta(x, \zeta_i)_{\zetaG}\ .
$$
Since $\zeta_*, \zeta_i$ both lie in the segment $[\zetaG, \zeta_{0, |q|^{1/2}}]$, the terms $\log \delta(x,\zeta_i)_{\zetaG}$ can be decomposed as
\begin{equation}
\log \delta(\zeta_*, \zeta_i)_{\zetaG} = \left\{\begin{matrix}
-\sigma(\zeta_i, \zetaG)\ , & i < *\\
-\sigma(\zeta_*, \zetaG)\ , & i \geq *\end{matrix}\right. \ .
\end{equation} Therefore, we can further decompose $\hat{g}_m(\zeta_*)$ as
\begin{equation}\label{eq:ghatlastdecomp}
\hat{g}_m(\zeta_*) = \sigma(\zeta_*, \zetaG)\left(1-\frac{2}{m^2-1}\sum_{i\geq *} w_{\psi_m}(\zeta_i)\right) - \frac{2}{m^2-1} \sum_{i < *} w_{\psi_m}(P) \sigma(\zeta_i, \zetaG)\ .
\end{equation}

We consider four cases depending on $m\mod 4$. Suppose first that $m$ is odd; then $\zeta_* = \zeta_{(m+1)/2}$ is the unique point in the minimal resultant locus. It satisfies $$-\log \delta(\zeta_*, \zeta_*)_{\zetaG} = \sigma(\zeta_*, \zetaG) = -\frac{1}{4} \log |q|\ .$$
\begin{itemize}
\item[Case i:] If $m\equiv 1\mod 4$, then $w_{\psi_m}(\zeta_*) =m-1 $. Arguing as in the proof of Proposition~\ref{prop:MRL}  we find that the sum of the weights of the $\zeta_i$ with $i \geq *$ was 
\begin{equation}\label{eq:m1geq}
\sum_{i \geq *} w_{\psi_m}(\zeta_i) = \frac{m(m-1)}{2} + m-1 = \frac{m^2+m-2}{2}\ .
\end{equation} We also saw in Proposition~\ref{prop:MRL} that among the indices with $i = 1, 2, ..., *-1$ exactly half are odd and half are even. Therefore, using the concrete expression for the points $\zeta_i$ given in Lemma~\ref{lem:fixedpoints}, we find
\begin{align}
\sum_{i < *} w_{\psi_m}(\zeta_i) \sigma(\zeta_i, \zetaG) &= \sum_{\substack{i < * \\ i \textrm{ odd}}} (m-1) \frac{i-1}{2(m-1)}\log|q|^{-1} + \sum_{\substack{ i < * \\ i \textrm{ even}}} (m+1) \frac{i}{2(m+1)}\log|q|^{-1}\nonumber\\
& = \frac{\log|q|^{-1}}{2}\sum_{i=1}^{(m-1)/2} i - \frac{\log|q|^{-1}}{2} \sum_{\substack{ i < *\\ i \textrm{ odd}}} 1\nonumber\\
& = \log|q|^{-1}\left(\frac{m^2-1}{16}  - \frac{m-1}{8}\right) = -\frac{m^2 - 2m+1}{16}\log|q|\ .\label{eq:m1leq}
\end{align} Inserting (\ref{eq:m1geq}) and (\ref{eq:m1leq}) into (\ref{eq:ghatlastdecomp}) gives
\begin{align*}
\hat{g}_m(\zeta_*) &= -\frac{1}{4} \log |q| \left(1-\frac{2}{m^2-1}\cdot \frac{m^2+m-2}{2}\right) +\frac{2}{m^2-1} \cdot \frac{m^2-2m+1}{16}\log|q|\\
&=-\frac{1}{4}\log|q| \left(\frac{1-m}{m^2-1}\right) + \frac{1}{8}\log|q| \cdot \frac{m^2-2m+1}{m^2-1}\\
& = \frac{1}{8}\log|q| \left(\frac{2m-2+m^2-2m+1}{m^2-1}\right) = \frac{1}{8} \log|q|\ .
\end{align*}
\item[Case ii:] If $m\equiv 3\mod 4$, then $\zeta_* = \zeta_{(m+1)/2}$ and $w_{\psi_m}(\zeta_*) = m+1$. Again we refer to Proposition~\ref{prop:MRL}, where we saw that the sum of the weights of the $\zeta_i$ with $i\geq *$ was 
\begin{equation}\label{eq:m3geq}
\sum_{i\geq *} w_{\psi_m}(\zeta_*) = \frac{m^2-2m-2}{2} + m+1 = \frac{m^2+m}{2}\ .
\end{equation} Among the indices with $i < *$, recall from the proof of Proposition~\ref{prop:MRL} that there are $(m+1)/4$ odd indices $i<*$; thus
\begin{align}
\sum_{i<*} w_{\psi_m}(\zeta_i) \sigma(\zeta_i, \zetaG) &= \sum_{\substack{i < * \\ i \textrm{ odd}}} (m-1) \frac{i-1}{2(m-1)}\log|q|^{-1} + \sum_{\substack{ i < * \\ i \textrm{ even}}} (m+1) \frac{i}{2(m+1)}\log|q|^{-1}\nonumber\\
& = \frac{\log|q|^{-1}}{2}\sum_{i=1}^{(m-1)/2} i - \frac{\log|q|^{-1}}{2} \sum_{\substack{ i < *\\ i \textrm{ odd}}} 1\nonumber\\
& = \log|q|^{-1}\left(\frac{m^2-1}{16}  - \frac{m+1}{8}\right) \nonumber\\
&= -\frac{m^2 - 2m-3}{16}\log|q|\label{eq:m3leq}\ .
\end{align}

We now insert (\ref{eq:m3geq}) and (\ref{eq:m3leq}) into (\ref{eq:ghatlastdecomp}) to find that 
\begin{align*}
\hat{g}_m(\zeta_*) &= -\frac{1}{4} \log |q| \left(1-\frac{2}{m^2-1}\cdot \frac{m^2+m}{2}\right) +\frac{2}{m^2-1} \cdot \frac{m^2-2m-3}{16}\log|q|\\
&=-\frac{1}{4}\log|q| \left(\frac{-1-m}{m^2-1}\right) + \frac{1}{8}\log|q| \cdot \frac{m^2-2m-3}{m^2-1}\\
& = \frac{1}{8}\log|q| \left(\frac{2m+2+m^2-2m-3}{m^2-1}\right) = \frac{1}{8} \log|q|\ .
\end{align*}
\end{itemize}

This completes the proof for $m$ odd. We now turn to the case that $m$ is even:
\begin{itemize}
\item[Case iii:] If $m\equiv 0\mod 4$, then $\zeta_* = \zeta_{m/2} = \zeta_{0, |q|^{\frac{m}{4(m+1)}}}$ and $w_{\psi_m}(\zeta_*) = m+1$. In Proposition~\ref{prop:MRL}, we already computed the total weight among the $\zeta_i$ with $i>*$; therefore
\begin{equation}\label{eq:m0geq}
\sum_{i\geq *}w_{\psi_m}(\zeta_i) = \frac{m^2-2}{2} + m+1 = \frac{m^2+2m}{2}\ .
\end{equation}

Next, we recall again from the proof of Proposition~\ref{prop:MRL} that there are $m/4$ odd indices $i<*$; therefore using the explicit formula for the $\zeta_i$ given in Lemma~\ref{lem:fixedpoints} we find
\begin{align}
\sum_{i<*} w_{\psi_m}(\zeta_i) \sigma(\zeta_i, \zetaG) &= \sum_{\substack{i < * \\ i \textrm{ odd}}} (m-1) \frac{i-1}{2(m-1)}\log|q|^{-1} + \sum_{\substack{ i < * \\ i \textrm{ even}}} (m+1) \frac{i}{2(m+1)}\log|q|^{-1}\nonumber\\
& = \frac{\log|q|^{-1}}{2}\sum_{i=1}^{(m/2)-1} i - \frac{\log|q|^{-1}}{2} \sum_{\substack{ i < *\\ i \textrm{ odd}}} 1\nonumber\\
& = \log|q|^{-1}\left(\frac{m(m-2)}{16}  - \frac{m}{8}\right) \nonumber\\
&= -\frac{m^2 -4m}{16}\log|q|\label{eq:m0leq}\ .
\end{align}
Inserting (\ref{eq:m0geq}) and (\ref{eq:m0leq}) into (\ref{eq:ghatlastdecomp}) gives
\begin{align*}
\hat{g}_m(\zeta_*) &= -\frac{m}{4(m+1)} \log |q| \left(1-\frac{2}{m^2-1}\cdot \frac{m^2+2m}{2}\right) +\frac{2}{m^2-1} \cdot \frac{m^2-4m}{16}\log|q|\\
&=-\frac{m}{4(m+1)}\log|q| \left(\frac{-1-2m}{m^2-1}\right) + \frac{1}{8}\log|q| \cdot \frac{m^2-4m}{m^2-1}\\
& = \frac{1}{8}\log|q| \left(\frac{4m^2+2m+(m^2-4m)(m+1)}{(m^2-1)(m+1)}\right) \\
&= \frac{m^3+m^2-2m}{m(m^2-1)}\cdot \frac{1}{8} \log|q|\ .
\end{align*}

\item[Case iv:] If $m\equiv 2\mod 4$, then $\zeta_* = \zeta_{(m/2)+1} = \zeta_{0, |q|^{\frac{m+2}{4(m+1)}}}$, and we find $w_{\psi_m}(\zeta_*) = m+1$. In Proposition~\ref{prop:MRL} we computed the total weight among $\zeta_i$ with $i>*$; therefore
\begin{equation}\label{eq:m2geq}
\sum_{i\geq *}w_{\psi_m}(\zeta_i) = \frac{m^2-2m-2}{2} + m+1 = \frac{m^2}{2}\ .
\end{equation}

We recall again from the proof of Proposition~\ref{prop:MRL} that there are $(m+2)/4$ indices $i< *$ which are odd; therefore
\begin{align}
\sum_{i<*} w_{\psi_m}(\zeta_i) \sigma(\zeta_i, \zetaG) &= \sum_{\substack{i < * \\ i \textrm{ odd}}} (m-1) \frac{i-1}{2(m-1)}\log|q|^{-1} + \sum_{\substack{ i < * \\ i \textrm{ even}}} (m+1) \frac{i}{2(m+1)}\log|q|^{-1}\nonumber\\
& = \frac{\log|q|^{-1}}{2}\sum_{i=1}^{m/2} i - \frac{\log|q|^{-1}}{2} \sum_{\substack{ i < *\\ i \textrm{ odd}}} 1\nonumber\\
& = \log|q|^{-1}\left(\frac{m(m+2)}{16}  - \frac{m+2}{8}\right) \nonumber\\
&= -\frac{m^2 -4}{16}\log|q|\label{eq:m2leq}\ .
\end{align}
Finally, inserting (\ref{eq:m2geq}) and (\ref{eq:m2leq}) into (\ref{eq:ghatlastdecomp}) we find
\begin{align*}
\hat{g}_m(\zeta_*) &= -\frac{m+2}{4(m+1)} \log |q| \left(1-\frac{2}{m^2-1}\cdot \frac{m^2}{2}\right) +\frac{2}{m^2-1} \cdot \frac{m^2-4}{16}\log|q|\\
&=-\frac{m+2}{4(m+1)}\log|q| \left(\frac{-1}{m^2-1}\right) + \frac{1}{8}\log|q| \cdot \frac{m^2-4}{m^2-1}\\
& = \frac{1}{8}\log|q| \left(\frac{2m+4+(m^2-4)(m+1)}{(m^2-1)(m+1)}\right) \\
&= \frac{m^3+m^2-2m}{m(m^2-1)}\cdot \frac{1}{8} \log|q|\ .
\end{align*}

\end{itemize}

\end{proof}

Finally, we are left to compute $\ordRes_{\psi_m}(\zetaG) = R_{\psi_m}$. To do this, we will conjugate $E$ into Weierstrass form and use known formulas for Latt\`es maps in Weierstrass form:
\begin{lem}\label{lem:Resval}
Let $\psi_m$ be the Latt\`es map associated to multiplication-by-$m$ on the elliptic curve E given in (\ref{eq:Tatecurve}). Then $$R_{\psi_m} = -\log |q|^{\frac{m^2(m^2-1)}{6}}\ .$$
\end{lem}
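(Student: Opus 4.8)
The plan is to pass to a short Weierstrass model and reduce the computation to the classical fact that the resultant of a flexible Latt\`es map is essentially a power of the discriminant of the underlying curve; this is the route suggested by the phrasing of the lemma. First I would complete the square ($y\mapsto y-\tfrac{x}{2}$) and then translate ($x\mapsto x+\tfrac1{12}$) to carry $E$ to a model $E'\colon \eta^2=\xi^3+A\xi+B$. Neither substitution rescales the variables, so $\Delta_{E'}=\Delta_{E}=q\prod_{n\ge 1}(1-q^n)^{24}$, and since $|q|<1$ forces $|1-q^n|=1$ we get $\ord(\Delta_{E'})=\ord(q)$. The induced change of coordinates on the $x$-line is the translation $x\mapsto x+\tfrac1{12}$; when the residue characteristic of $K$ is neither $2$ nor $3$ this lies in the stabilizer of $\zetaG$ in $\PGL_2(K)$, so $R_{\psi_m}=\ordRes_{\psi_m}(\zetaG)$ is unchanged and may be computed for the Latt\`es map attached to $E'$.

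Next I would exhibit a normalized lift. Writing multiplication-by-$m$ on $E'$ as $x([m]P)=\phi_m(x)/f_m(x)^2$, where $f_m$ is the $m$-th division polynomial (so $f_m^2$ is a polynomial in $x$ of degree $m^2-1$) and $\phi_m=x f_m^2-f_{m-1}f_{m+1}$ is monic of degree $m^2$, the Latt\`es map is $[F,G]$ with $F,G$ the degree-$m^2$ homogenizations of $\phi_m$ and $f_m^2$. Since $\phi_m,f_m^2\in\ZZ[x,A,B]$ with $\phi_m$ monic, and $A,B\in\mathcal{O}$, all coefficients of $F$ and $G$ lie in $\mathcal{O}$ while the coefficient of $X^{m^2}$ in $F$ is $1$; hence $[F,G]$ is already normalized and $R_{\psi_m}=\ordRes(F,G)$. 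Because $\deg f_m^2=m^2-1<m^2$, the form $G$ is divisible by $Y$, so by multiplicativity of the resultant $\Res(F,G)=\Res(F,Y)\cdot\Res(\phi_m,f_m^2)=\pm\,\Res(\phi_m,f_m^2)$, the first factor being $\pm1$ since $F$ is monic in $X$.

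It then remains to evaluate $\Res(\phi_m,f_m^2)$. The polynomials $\phi_m$ and $f_m^2$ share a root exactly when $\psi_m$ fails to have degree $m^2$, which over the $(A,B)$-parameter space happens precisely when $E'$ degenerates, i.e. when $4A^3+27B^2=0$; since this polynomial is irreducible, $\Res(\phi_m,f_m^2)$ is, up to a constant, a power of $\Delta_{E'}$, and a weight count with $A,B,x$ of weights $4,6,2$ (so that $\Res(\phi_m,f_m^2)$ has weight $2m^2(m^2-1)$ and $\Delta_{E'}$ weight $12$) forces the power to be $m^2(m^2-1)/6$. Thus $\Res(\phi_m,f_m^2)=c_m\,\Delta_{E'}^{m^2(m^2-1)/6}$ for some $c_m\in\QQ^\times$ supported on the primes dividing $6m$ — as one checks either by specializing to $y^2=x^3+Ax$ or by noting that $\psi_m$ inherits good reduction from $E'$ whenever $p\nmid m$ (cf.\ \cite{silverman:ads}). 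Provided the residue characteristic does not divide $6m$, $c_m$ is a unit, and therefore $R_{\psi_m}=\tfrac{m^2(m^2-1)}{6}\,\ord(\Delta_{E'})=\tfrac{m^2(m^2-1)}{6}\,\ord(q)=-\log|q|^{\frac{m^2(m^2-1)}{6}}$, as asserted.

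I expect the main obstacle to be exactly this last identity: not the exponent $m^2(m^2-1)/6$ (which the weight count pins down), but the content $c_m$, which must be shown to be supported on the primes dividing $6m$ so that it disappears upon taking $\ord$ in the residue characteristics of interest. In residue characteristic $2$, $3$, or dividing $m$ the naive short Weierstrass model is no longer integral and one would instead argue from a minimal model of $E$; I would either restrict to the complementary case or treat those primes separately. By comparison, the remaining points — that the passage to short form fixes $\zetaG$, and that the exhibited lift is normalized — are routine bookkeeping once that restriction is in force.
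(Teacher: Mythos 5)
Your skeleton is essentially the paper's: the same passage to a short Weierstrass model by completing the square and translating by $\tfrac1{12}$ (harmless at $\zetaG$ since $|12|=1$ when the residue characteristic is not $2$ or $3$), the same observation that the division-polynomial lift $[F,G]$ with $F$ monic and integral coefficients is already normalized, invariance of the resultant under the unimodular conjugation, and $|\Delta|=|q|$ for the Tate curve. The one genuinely different step is the key identity: the paper simply quotes \cite{silverman:ads}, Exercise 6.23, which gives $\Res(F_m,G_m)=\pm\Delta^{m^2(m^2-1)/6}$ on the nose, whereas you re-derive it from irreducibility of $4A^3+27B^2$ plus the weighted-homogeneity count (your weight $2m^2(m^2-1)$ for the resultant and $12$ for the discriminant are correct), at the cost of an undetermined content $c_m\in\ZZ$. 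That trade is fine in principle, and your good-reduction specialization is the right tool to kill $c_m$; the ``specialize to $y^2=x^3+Ax$'' alternative does not by itself control the content and should be dropped.

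The gap is the restriction you impose at the end. As written, you only show $|c_m|_v=1$ when the residue characteristic $p$ satisfies $p\nmid 6m$, so your proof does not cover $p\mid m$ (with $p\ge 5$), while the lemma carries no such hypothesis and is used in Proposition~\ref{prop:Latteseg} for arbitrary $m$ with only $p\notin\{2,3\}$ excluded (e.g.\ $m=5$ over $\CC_5$). Fortunately the restriction is unnecessary, so the repair stays inside your own argument: for any $p\ge 5$ choose integers $A,B$ with $p\nmid 4A^3+27B^2$; over the residue field the isogeny $[m]$ still has degree $m^2$ (inseparability only moves degree between the separable and inseparable parts), so the Latt\`es map of the reduced curve has degree $m^2$, and since $\phi_m$ is monic of degree $m^2\ge\deg f_m^2$ the reduced pair $(\tilde\phi_m,\tilde f_m^2)$ can have no common factor; monicity of $\phi_m$ also makes the resultant commute with reduction, whence $p\nmid \Res(\phi_m,f_m^2)(A,B)=c_m(4A^3+27B^2)^{m^2(m^2-1)/6}$ and therefore $p\nmid c_m$ for every $p\ge5$, including $p\mid m$. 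With that adjustment your argument proves the lemma in the generality the paper needs; alternatively, citing the exercise as the paper does sidesteps the content question entirely.
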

\begin{proof}
Recall that $E: y^2 + xy = x^3 - b_2 x - b_3$ for explicit power series $b_2 = 5q+ 45q^2 + ...$ and $b_3 = q + 23q^2 + ...$. The isogeny $\iota: (x,y) \mapsto \left(x-\frac{1}{12}, y-\frac{x}{2} + \frac{1}{24}\right)$ sends the affine plane curve $$\hat{E}: y^2 = x^3 -\left(\frac{1}{48} + b_2\right) x +\left( \frac{1}{864} +\frac{b_2}{12} - b_3\right)$$ to $E$. Let $g_2 = \frac{1}{48} + b_2$ and $g_3 = \frac{1}{864} + \frac{b_2}{12} - b_3$. Note that since we are assuming that $K$ does not have characteristic 2 or 3, we find $|g_2| = |g_3| = 1$. 

The map $\iota$ induces a commutative diagram

\[\xymatrixcolsep{5pc}
\xymatrix{
E\ar[r]^{\iota^{-1}}\ar[d]^{x}& \hat{E} \ar[r]^{[m]}\ar[d]^{x} & \hat{E} \ar[r]^{\iota}\ar[d]^{x} & E\ar[d]^{x}\\
\PP^1\ar[r]^{z\mapsto z+\frac{1}{12}}\ar@/_2pc/[rrr]_{\psi_m} & \PP^1 \ar[r]_{\phi_m} & \PP^1\ar[r]^{z\mapsto z-\frac{1}{12}} & \PP^1
}
\] where $\phi_m$ is the Latt\`es map induced by the x-coordinate map on the Weierstrass curve $\hat{E}$. Therefore, $\psi_m = \gamma^{-1}\circ \phi_m \circ \gamma$, where $\gamma(z) = z+\frac{1}{12}$. It is known (see \cite{silverman:ads} Exercise 6.23) that $\phi_m = \frac{f_m}{g_m}$ for polynomials $f_m, g_m \in \ZZ[x, g_2, g_3]$ where $f_m$ is monic of degree $m^2$ and $g_m$ has degree $m^2-1$. The natural homogeneous lift $\Phi_m = [ F_m, G_m]$ is therefore a normalized lift, and by \cite{silverman:ads} Exercise 6.23 we have that $$\Res(F_m, G_m) = \pm \Delta(\hat{E})^{\frac{m^2(m^2-1)}{6}}\ ,$$ where $\Delta(\hat{E})$ is the discriminant of $\hat{E}$.

Applying the conjugation in the above commutative diagram, we find, $$\psi_m = \frac{12f_m(x+1/12) - g_m(x+1/12)}{12 g_m(x+1/12)}\ .$$ We remark that the natural homogeneous lift $$\Psi_m = [12F_m(X+\frac{1}{12}Y, Y) - G_m(X+\frac{1}{12}Y), 12 G_m(X + \frac{1}{12}Y, Y)]$$ is again normalized, since $F_m$ is monic, $G_m$ has degree strictly smaller than $F_m$, and the characteristic of $K$ is not 2 or 3. Therefore, in order to compute $R_{\psi_m}$, note that $\SL_2$ conjugation leaves the resultant invariant, i.e. 
\begin{align*}
\Res\left(12F_m(X+\frac{1}{12}Y, Y) - G_m(X+\frac{1}{12}Y,Y), 12 G_m(X + \frac{1}{12}Y, Y)\right)= \Res(F_m, G_m)\ .
\end{align*} Finally, for Tate curves it is known that $|\Delta(\hat{E})| = |q|$ (see, e.g., \cite{Tate}); therefore $$R_{\psi_m} = -\log |q|^{\frac{m^2(m^2-1)}{6}}$$ as asserted.
\end{proof}

We are finally ready to give the expression for the minimal resultant value of a Latt\`es map:
\begin{prop}\label{prop:Latteseg}
Suppose that $K$ is a complete, algebraically closed, non-Archimedean valued field with characteristic and residue characteristic not equal to 2 or 3. Let $\psi_m$ be the Latt\`es map associated to a Tate curve $E$ with uniformizing parameter $q$ satisfying  $0<|q|<1$. Then
\begin{equation}\label{eq:minresLattes}
R_{[\psi_m]} = \left\{\begin{matrix} -\frac{m^2(m^2-1)}{24} \log |q|\ ,& m \textrm{ odd}\\
\left(\frac{m^5+m^4-2m^3}{8(m+1)} - \frac{m^2(m^2-1)}{6}\right)\log |q|\ , & m \textrm{ even}\ .
\end{matrix}\right. \ .
\end{equation}

In particular, the iteration formula $R_{[(\psi_m)^n]} = \frac{(m^{n})^2\left((m^n)^2-1\right)}{m^2(m^2-1)}R_{[\psi_m]}$ hold if and only if $m$ is odd.
\end{prop}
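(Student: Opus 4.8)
The plan is to assemble the results of this section --- Proposition~\ref{prop:MRL}, Lemma~\ref{lem:oRGreen}, Proposition~\ref{prop:minGreenVal}, and Lemma~\ref{lem:Resval} --- into the closed form (\ref{eq:minresLattes}) for $R_{[\psi_m]}$, and then to deduce the iteration statement by direct substitution, using the fact that an iterate of a flexible Latt\`es map is again one.

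First I would compute $R_{[\psi_m]}$. Since $\ordRes_{\psi_m}$ is continuous and convex-up on $\pberk$ with $\ordRes_{\psi_m}(\gamma(\zetaG)) = R_{\psi_m^\gamma}$ at type~II points, the minimal resultant $R_{[\psi_m]}$ is the minimum value of $\ordRes_{\psi_m}$, attained precisely on $\MinResLoc(\psi_m)$. By Proposition~\ref{prop:MRL} this locus is the single point $\zeta_*$, and since $\zeta_*$ is one of the type~II fixed points $\zeta_i$ it lies in the finite graph $\Gamma = \widehat{\Gamma_{\textrm{FR}}}$. Lemma~\ref{lem:oRGreen} then gives $R_{[\psi_m]} = \ordRes_{\psi_m}(\zeta_*) = m^2(m^2-1)\,\hat{g}_m(\zeta_*) + \ordRes_{\psi_m}(\zetaG)$. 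Substituting the value of $\hat{g}_m(\zeta_*)$ from Proposition~\ref{prop:minGreenVal} and $\ordRes_{\psi_m}(\zetaG) = R_{\psi_m} = -\tfrac{m^2(m^2-1)}{6}\log|q|$ from Lemma~\ref{lem:Resval}, and simplifying in the two parity cases (for $m$ odd one combines $\tfrac{1}{8} - \tfrac{1}{6} = -\tfrac{1}{24}$, while for $m$ even the factor $m^2-1$ cancels with the denominator of the $\hat{g}_m$-term), yields (\ref{eq:minresLattes}).

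For the iteration statement I would first note that $\psi_m^{\,n} = \psi_{m^n}$: iterating the $x$-coordinate semiconjugacy $n$ times shows $\psi_m^{\,n}$ completes the defining diagram for multiplication-by-$m^n$, so by uniqueness it equals $\psi_{m^n}$. Hence $R_{[\psi_m^{\,n}]} = R_{[\psi_{m^n}]}$, and since $\deg\psi_m = m^2$ the constant of Theorem~\ref{thm:mainthm} is $N = \tfrac{m^{2n}(m^{2n}-1)}{m^2(m^2-1)}$, so the iteration formula reads $R_{[\psi_{m^n}]} = \tfrac{m^{2n}(m^{2n}-1)}{m^2(m^2-1)}\,R_{[\psi_m]}$. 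If $m$ is odd then $m^n$ is odd for every $n$, both sides are governed by the odd branch of (\ref{eq:minresLattes}), and substituting $R_{[\psi_{m^n}]} = -\tfrac{m^{2n}(m^{2n}-1)}{24}\log|q|$ together with $R_{[\psi_m]} = -\tfrac{m^2(m^2-1)}{24}\log|q|$ verifies the identity for all $n$ simultaneously. If $m$ is even it suffices to exhibit one $n$ at which the formula fails, and I would take $n=2$, where $N = m^2(m^2+1)$ and both $\psi_m$ and $\psi_{m^2}$ use the even branch; the terms $-\tfrac{m^4(m^4-1)}{6}\log|q|$ cancel identically, and the remaining discrepancy $R_{[\psi_{m^2}]} - m^2(m^2+1)R_{[\psi_m]}$ simplifies --- via the identity $m(m^2+2)(m+1)^2 - (m^2+1)^2(m+2) = m^3 + m - 2 = (m-1)(m^2+m+2)$ --- to $\tfrac{m^5(m-1)^2(m^2+m+2)}{8(m^2+1)(m+1)}\log|q|$, which is nonzero for every $m \ge 2$. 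So the formula already fails at $n=2$ when $m$ is even.

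There is no conceptual obstacle here: the analytic heavy lifting (identifying $\MinResLoc(\psi_m)$, evaluating $\hat{g}_m$ there, and computing $R_{\psi_m}$) has already been carried out in the preceding results, so what remains is careful bookkeeping. The two points that need attention are (i) the identification $\psi_m^{\,n} = \psi_{m^n}$ together with pinning down $N$, and (ii) the algebraic simplification in the even case, where one must confirm that the residual rational function is genuinely nonzero --- this reduces to $m^3+m-2\neq 0$ for $m\ge2$. As a sanity check I would verify the even branch numerically at $m=2$: there $R_{[\psi_2]} = -\tfrac{2}{3}\log|q|$ and $R_{[\psi_4]} = -\tfrac{56}{5}\log|q|$, whereas $N\,R_{[\psi_2]} = 20\cdot(-\tfrac{2}{3})\log|q| = -\tfrac{40}{3}\log|q| \neq R_{[\psi_4]}$, the difference being exactly $\tfrac{32}{15}\log|q|$ as predicted.
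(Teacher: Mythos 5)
Your proposal is correct and follows essentially the same route as the paper: combine Proposition~\ref{prop:MRL}, Lemma~\ref{lem:oRGreen}, Proposition~\ref{prop:minGreenVal}, and Lemma~\ref{lem:Resval} to get $R_{[\psi_m]} = m^2(m^2-1)\hat{g}_m(\zeta_*) + R_{\psi_m}$ and simplify by parity, then read off the iteration claim from the resulting formula. The only difference is that where the paper calls the last claim ``immediate,'' you spell it out via $\psi_m^{\,n} = \psi_{m^n}$ and the explicit nonvanishing of the discrepancy at $n=2$ for even $m$, which is a welcome verification rather than a deviation.
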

\begin{proof}
By Lemma~\ref{lem:oRGreen}, we can compute the minimal resultant value by
\[
R_{[\psi_m]} = \ordRes_{\psi_m}(\zeta_*) = m^2(m^2-1) g_m(\zeta_*) + \ordRes_{\psi_m}(\zetaG)\ .
\]
The quantity $g_m(\zeta_*)$ was computed in Proposition~\ref{prop:minGreenVal}, while the quantity $\ordRes_{\psi_m}(\zetaG) = R_{\psi_m}$ was computed in Lemma~\ref{lem:Resval}. Inserting these into the above expression for $R_{[\psi_m]}$ gives the asserted formula. The last claim is immediate from the given expression for $R_{[\psi_m]}$. 

\end{proof}

In particular, if we combine the above proposition with \cite{KJThesis} Corollary 4.8, we are able to give an explicit formula for the minimal of the diagonal Arakelov-Green's function attached to a Latt\`es map:
\begin{cor}
Let $K$ be as in Proposition~\ref{prop:Latteseg}, and let $\psi_m$ be the Latt\`es map associated to a Tate curve $E$ with uniformizing parameter $q$ satisfying  $0<|q|<1$. Then $$\min_{x\in \pberk} g_{\psi_m} (x,x) = -\frac{1}{24} \log |q|\ .$$
\end{cor}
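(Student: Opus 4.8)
The plan is to deduce the value of $\min_{x\in\pberk} g_{\psi_m}(x,x)$ from the minimal resultant values computed in Proposition~\ref{prop:Latteseg}, via the limiting characterization of the minimal diagonal Arakelov--Green's function. The first step is to record that iterating multiplication-by-$m$ on the Tate curve $E$ yields multiplication-by-$m^n$, so that $(\psi_m)^n = \psi_{m^n}$; each iterate is again the flexible Lattès map attached to the \emph{same} curve $E$, now of degree $(m^n)^2 = m^{2n}$. Then, applying \cite{KJThesis} Corollary~4.8 (the same input used in the proof of the corollary of Section~\ref{sect:potentialtheory}), with $d = m^2$, one obtains
\[
\min_{x\in\pberk} g_{\psi_m}(x,x) \;=\; \lim_{n\to\infty} \frac{1}{m^{2n}(m^{2n}-1)}\, R_{[\psi_{m^n}]}\,,
\]
so the problem reduces to evaluating this limit using the formula of Proposition~\ref{prop:Latteseg}.

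Next I would split according to the parity of $m$. If $m$ is odd, then $m^n$ is odd for all $n$, and Proposition~\ref{prop:Latteseg} gives $R_{[\psi_{m^n}]} = -\tfrac{m^{2n}(m^{2n}-1)}{24}\log|q|$; hence every term of the sequence --- and therefore its limit --- equals $-\tfrac{1}{24}\log|q|$. (Alternatively, in the odd case Proposition~\ref{prop:Latteseg} shows $\psi_m$ satisfies the hypotheses of Theorem~\ref{thm:mainthm}, so one may invoke the corollary of Section~\ref{sect:potentialtheory} directly with $d = m^2$ and $R_{[\psi_m]} = -\tfrac{m^2(m^2-1)}{24}\log|q|$.)

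If $m$ is even, set $M = m^n$ (also even). Proposition~\ref{prop:Latteseg} gives
\[
R_{[\psi_M]} \;=\; \left(\frac{M^5+M^4-2M^3}{8(M+1)} \;-\; \frac{M^2(M^2-1)}{6}\right)\log|q|\,,
\]
and factoring $M^5+M^4-2M^3 = M^3(M-1)(M+2)$ turns this into
\[
\frac{1}{M^2(M^2-1)}\,R_{[\psi_M]} \;=\; \left(\frac{M(M+2)}{8(M+1)^2} \;-\; \frac{1}{6}\right)\log|q| \;\xrightarrow[M\to\infty]{}\; \left(\frac18 - \frac16\right)\log|q| \;=\; -\frac{1}{24}\log|q|\,.
\]
Since $M = m^n \to \infty$, the limit in the displayed formula for $\min_x g_{\psi_m}(x,x)$ is again $-\tfrac{1}{24}\log|q|$, which is the asserted value in both cases.

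The one point that genuinely needs care is that \cite{KJThesis} Corollary~4.8 is an \emph{unconditional} statement about degree-$d$ rational maps, valid whether or not the minimal resultant iteration formula holds; this is what lets us run the even case, where $\psi_m$ fails the hypotheses of Theorem~\ref{thm:mainthm}, with no circularity. Everything else is the short computation above, the only bookkeeping being the polynomial factorization in the even case, which I expect to be routine.
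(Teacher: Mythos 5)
Your proposal is correct and follows essentially the same route as the paper: both apply \cite{KJThesis} Corollary~4.8 to identify $\min_{x\in\pberk} g_{\psi_m}(x,x)$ with the limit of $\frac{1}{d^{2n}-d^n}R_{[(\psi_m)^n]}$, use $(\psi_m)^n=\psi_{m^n}$ together with the formula of Proposition~\ref{prop:Latteseg}, and evaluate the limit (the paper leaves the even-case limit computation implicit, which you carry out explicitly and correctly).
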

\begin{proof}
By Proposition~\ref{prop:Latteseg}, the minimum resultant is given by the formula in (\ref{eq:minresLattes}). Passing to iterates, if the minimal resultant value of $\psi_m$ is normalized by $\frac{1}{d^{2n}-d^n} = \frac{1}{m^2(m^2-1)}$ then by \cite{KJThesis} Corollary 4.8, it converges to $\min_{x\in \pberk}g_\varphi(x,x)$. In particular, normalizing the expression in (\ref{eq:minresLattes}) and letting $n\to \infty$ we find  $$\min_{x\in \pberk} g_\varphi(x,x) = -\frac{1}{24} \log_v |q|\ .$$
\end{proof}

In the case of a number field, the minimal value of $g_{\psi_m}(x,x)$ given here can be compared with the minimal value of the Arakelov-Green's function on the elliptic curve $E$ itself. 

Let $L$ be a number field, and let $v$ be a finite place of $L$; denote the completion at $v$ by $L_v$. Let $K_v$ be the completion of the algebraic closure of $L_v$. The Arakelov-Green's function on an elliptic curve $E/K_v$ is given by $g_{E, v}(P,Q) = \lambda_v(P-Q)$, where $\lambda_v$ is the local N\'eron-Tate height on $E$. The idea of the next two Propositions and their proofs were suggested to us by Matt Baker:
\begin{prop}\label{prop:minGreenEC}
Let $E/K_v$ be a Tate curve with uniformizing parameter $q$, and let $g_{E,v}(P,Q) = \lambda_v(P-Q)$ be the normalized Arakelov-Green's function on $E$. Then $$\min_{P, Q\in E\times E}\ g_{E,v}(P,Q) = \frac{1}{24} \log |q|_v\ .$$
\end{prop}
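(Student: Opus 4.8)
The plan is to reduce the statement to a short one–variable estimate using the classical closed formula for the N\'eron local height on a Tate curve. Since by hypothesis $g_{E,v}(P,Q)=\lambda_v(P-Q)$, and $\lambda_v$ has a logarithmic singularity (value $+\infty$) at the origin $O\in E$, minimizing $g_{E,v}$ over $E\times E$ is the same as minimizing $\lambda_v(R)$ over $R\in E(K_v)$ with $R\neq O$; the diagonal contributes nothing to the infimum.

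Next I would pass through the Tate uniformization $E(K_v)\cong K_v^\times/q^{\ZZ}$ and invoke the explicit formula for $\lambda_v$ on a Tate curve (see \cite{BakerPetsche}; compare \cite{silverman:ads} and \cite{Tate}). Writing $R\leftrightarrow u$ with the representative chosen so that $|q|_v<|u|\le 1$, and setting $t=\log|u|/\log|q|_v\in[0,1)$, one has
\[
\lambda_v(R)\;=\;\tfrac12\,B_2(t)\,\bigl(-\log|q|_v\bigr)\;-\;\log\bigl|\theta(u)\bigr|,
\]
where $B_2(T)=T^2-T+\tfrac16$ is the second Bernoulli polynomial and $\theta(u)=(1-u)\prod_{n\ge1}(1-q^nu)(1-q^nu^{-1})$ is the Tate theta function. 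This is the canonical (Arakelov) normalization of $\lambda_v$ for which $g_{E,v}(P,Q)=\lambda_v(P-Q)$ holds with no stray additive constant; note that $\int_0^1 B_2(t)\,dt=0$, which is exactly the normalization $\int\lambda_v\,d\mu_v=0$ carried by the Arakelov--Green's function.

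From here the estimate is immediate in both directions. Writing $B_2(t)=(t-\tfrac12)^2-\tfrac1{12}\ge-\tfrac1{12}$ on $[0,1)$ handles the quadratic part. For the product, I would observe that on the fundamental annulus $|q|_v<|u|\le1$ one has $|q^nu|<1$ and $|q^nu^{-1}|<|q|_v^{\,n-1}\le1$ for every $n\ge1$, so $|1-q^nu|=|1-q^nu^{-1}|=1$ and hence $|\theta(u)|=|1-u|\le 1$, giving $-\log|\theta(u)|\ge0$. Since $-\log|q|_v>0$, these two facts combine to give $\lambda_v(R)\ge\tfrac12\cdot(-\tfrac1{12})\cdot(-\log|q|_v)=\tfrac1{24}\log|q|_v$. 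For sharpness I would exhibit a point attaining the bound: choose any $u$ with $|u|=|q|_v^{1/2}$ (possible since $K_v$ is algebraically closed and so has divisible value group); then $t=\tfrac12$ and every factor of $\theta(u)$ has absolute value exactly $1$, so the inequality is an equality there. Together these prove $\min_{E\times E}g_{E,v}=\tfrac1{24}\log|q|_v$.

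The only genuinely delicate point is setting up the Tate-curve formula for $\lambda_v$ with exactly the right normalization: one must check that the $\lambda_v$ in the hypothesis $g_{E,v}(P,Q)=\lambda_v(P-Q)$ is the canonical/Arakelov-normalized local height (so there is no hidden additive constant, which the vanishing $\int_0^1 B_2=0$ confirms), and one must be sure the theta-correction enters with the sign that keeps $\lambda_v$ bounded below with a single pole at $O$, rather than bounded above. Everything after that is the elementary Bernoulli-polynomial computation above.
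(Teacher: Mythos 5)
Your argument is correct and follows essentially the same route as the paper: both reduce to the explicit Tate-curve formula for $\lambda_v$, minimize the second Bernoulli term at $r=\tfrac12$ (value $-\tfrac1{12}$, scaled by $-\log|q|_v=\log\max(|j_E|_v,1)$), and exhibit points where the remaining non-negative correction vanishes. The only difference is cosmetic: the paper quotes the Baker--Petsche decomposition $\lambda_v(P-Q)=i_v(P,Q)+\tfrac12 B_2(r(P-Q))\log\max(|j_E|_v,1)$ with $i_v\geq 0$, while you use the classical theta-function formula and verify directly that $|\theta(u)|=|1-u|\leq 1$ on the fundamental annulus, which is exactly the statement that the intersection term is non-negative and vanishes at your chosen point.
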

\begin{proof}
We will use notation from Baker and Petsche \cite{BakerPetsche}. Let $u:E\to K_v^\times/q^\ZZ$ be the inverse of the Tate isomorphism described above, where we view $u(P)$ as an element of $K_v^\times$ normalized so that $|q|_v< |u(P)|_v \leq 1$. The retraction homomorphism $r:E\to \RR/\ZZ$ is given by $$r(P) = \frac{\log |u(P)|_v}{\log |q|_v}\ .$$ Since $E$ has multiplicative reduction, the Green's function $\lambda_v(P-Q)$ on $E$ can be computed explicitly in terms of the periodic second Bernoulli polynomial and a non-negative intersection term (\cite{BakerPetsche} Section 3):
\[
\lambda_v(P-Q) = i_v(P,Q) + \frac{1}{2} \textbf{B}_2(r(P-Q)) \cdot \log\max(|j_E|_v, 1)\ .
\] The periodic second Bernoulli polynomial $\textbf{B}_2(t) =(t-\lfloor t\rfloor)^2 -(t-\lfloor t\rfloor) + \frac{1}{6}$ is minimized for $t= \frac{1}{2}$, and the minimal value in this case is $\textbf{B}_2\left(\frac{1}{2}\right) = -\frac{1}{12}$. 

Now choose any $P, Q$ to satisfy $|u(P-Q)|_v = |q|_v^{1/2}$ and $r(P)\neq r(Q)$ (in particular, we could take $Q=O$ to be the identity and $P$ to be the image of $q^{1/2}$ under the Tate map $K_v^\times/q^\ZZ \to E$). The assumption $r(P)\neq r(Q)$ implies that the intersection term $i_v(P,Q) = 0$, and since $i_v(\cdot, \cdot)\geq 0$ our choice of $P$ and $Q$ gives a global minimum. We find that
\begin{align*}
\min_{P, Q\in E\times E} \ \lambda_v(P-Q) &= \frac{1}{2}\textbf{B}_2\left(\frac{1}{2}\right) \cdot \log\max(|j_E|_v, 1)\\
& = \frac{1}{24} \log |q|\ , 
\end{align*} where we are using the fact that $|j_E|_v = |q|_v^{-1}$ for a Tate curve.
\end{proof}

In the case of the Latt\`es map associated to multiplication by two, we can also compare our calculations for the minimum of $g_{\psi_2}(x,x)$ on $\pberk$ to the minimum of $g_{\psi_2}(x,y)$ on $\PP^1(K)$:
\begin{prop}
Let $E/K_v$ be the Tate curve over $K_v$ with uniformizing parameter $q,\  0 < |q| < 1$. Then $$\min_{x,y\in \PP^1(K_v)}\ g_{\psi_2}(x,y) = \frac{1}{12} \log_v |q|\ .$$
\end{prop}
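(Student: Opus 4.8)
The plan is to transfer the minimization from $\PP^1$ to the Tate curve $E$ via the degree-two $x$-coordinate map $\pi := x\colon E \to \PP^1$, using the Latt\`es relation $\pi\circ[2] = \psi_2\circ\pi$, and then to feed the result into Proposition~\ref{prop:minGreenEC}. (By Lemma~\ref{lem:Resval} one may conjugate $\psi_2$ by the translation $\gamma(z)=z+\tfrac1{12}$ without affecting the minimum, since conjugating by an element of $\PGL_2(K_v)$ only translates $g_{\psi_2}$; so we are free to use the projection attached to whichever Weierstrass/Tate model is convenient.)

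First I would identify the measures. Let $\mu_E$ be the canonical measure on $E^{\mathrm{an}}$ (the $[2]$-invariant probability measure, and the one with respect to which $\lambda_v(\,\cdot-\cdot\,)$ is the Arakelov--Green's function). Since $\psi_2^{*}\pi_{*}\mu_E = \pi_{*}[2]^{*}\mu_E = 4\,\pi_{*}\mu_E$ and $\pi_{*}\mu_E$ charges no exceptional point, uniqueness of the equilibrium measure gives $\pi_{*}\mu_E = \mu_{\psi_2}$. Hence $\pi^{*}\mu_{\psi_2} = \pi^{*}\pi_{*}\mu_E = \mu_E + [-1]^{*}\mu_E = 2\mu_E$, and $\pi^{*}\delta_{\pi(Q)} = \delta_Q + \delta_{-Q}$ for $Q$ not $2$-torsion. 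Now fix $Q\in E(K_v)$ and compare, as functions of $P\in E^{\mathrm{an}}$, the quantities $h(P):=g_{\psi_2}(\pi(P),\pi(Q))$ and $k(P):=\lambda_v(P-Q)+\lambda_v(P+Q)$. Since the Laplacian commutes with $\pi^{*}$ and $\Delta_P\, g_{\psi_2}(\cdot,w) = \mu_{\psi_2}-\delta_w$, the above pullback formulas give $\Delta_E h = 2\mu_E - \delta_Q - \delta_{-Q}$; and since $\lambda_v(\cdot-Q)$ and $\lambda_v(\cdot+Q)$ have Laplacians $\mu_E-\delta_Q$ and $\mu_E-\delta_{-Q}$, also $\Delta_E k = 2\mu_E-\delta_Q-\delta_{-Q}$. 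Thus $h-k$ has zero Laplacian on the connected compact space $E^{\mathrm{an}}$; the logarithmic poles at $P=\pm Q$ cancel, so $h-k$ is bounded, hence a constant $c(Q)$. Symmetry of $g_{\psi_2}$ and of $k$ in their two arguments shows $c(Q)$ is independent of $Q$, say $c$; integrating the resulting identity against $d\mu_E(P)\,d\mu_E(Q)$ and invoking the normalizations $\iint g_{\psi_2}\,d\mu_{\psi_2}\,d\mu_{\psi_2}=0$ and $\iint\lambda_v(P\mp Q)\,d\mu_E\,d\mu_E = 0$ (plus $[-1]$-invariance of $\mu_E$) forces $c=0$. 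This yields
\[
g_{\psi_2}(\pi(P),\pi(Q)) = \lambda_v(P-Q)+\lambda_v(P+Q),\qquad P,Q\in E(K_v).
\]

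With the identity in hand the optimization is short. Because $K_v$ is algebraically closed, $\pi\colon E(K_v)\to\PP^1(K_v)$ is surjective, so $\min_{x,y\in\PP^1(K_v)} g_{\psi_2}(x,y) = \min_{P,Q\in E(K_v)}\bigl[\lambda_v(P-Q)+\lambda_v(P+Q)\bigr]$. Likewise $[2]\colon E(K_v)\to E(K_v)$ is surjective, so the map $(P,Q)\mapsto(P-Q,\,P+Q)$ is onto $E(K_v)\times E(K_v)$ (given $(R,S)$, pick $P$ with $2P=R+S$ and set $Q=P-R$). Therefore the minimum equals $\min_{R,S\in E(K_v)}\bigl[\lambda_v(R)+\lambda_v(S)\bigr] = 2\min_{R\in E(K_v)}\lambda_v(R)$, and $\min_{R\in E(K_v)}\lambda_v(R) = \min_{P,Q}\lambda_v(P-Q) = \tfrac1{24}\log|q|_v$ by Proposition~\ref{prop:minGreenEC}. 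Hence $\min_{x,y\in\PP^1(K_v)} g_{\psi_2}(x,y) = \tfrac1{12}\log_v|q|$.

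\textbf{Main obstacle.} The delicate part is the middle step: setting up potential theory on $E^{\mathrm{an}}$ (a Berkovich curve of genus one), justifying $\pi^{*}\mu_{\psi_2}=2\mu_E$ and $\pi^{*}\delta_{\pi(Q)}=\delta_Q+\delta_{-Q}$, checking that the normalization of $\lambda_v$ as an Arakelov--Green's function is compatible with the Laplacian identity $\Delta_P\lambda_v(P-Q)=\mu_E-\delta_Q$, and verifying that the additive constant vanishes. One can shortcut this by quoting the standard formula expressing a flexible Latt\`es Green's function in terms of the elliptic N\'eron function, but the Laplacian bookkeeping above is the substance of the argument.
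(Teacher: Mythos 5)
Your proof is correct and follows the same overall route as the paper --- reduce to the identity $g_{\psi_2}(x(P),x(Q)) = \lambda_v(P-Q)+\lambda_v(P+Q)$ and then minimize using Proposition~\ref{prop:minGreenEC} --- but you differ in how you obtain that identity. The paper first passes to a minimal Weierstrass model by conjugation (harmless, since conjugation does not change the minimum of the normalized Green's function) and then simply cites Baker \cite{Baker}, Appendix B, for the identity; this is exactly the ``shortcut'' you mention at the end. You instead derive the identity by potential theory on $E^{\mathrm{an}}$: push forward and pull back the canonical measure through the degree-two projection, compare Laplacians, and pin the additive constant with the normalization $\iint g_{\psi_2}\,d\mu_{\psi_2}\,d\mu_{\psi_2}=0$. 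That derivation is sound, but two of its steps deserve more than the gesture you give: the relation $\psi_2^*\pi_*\mu_E=\pi_*[2]^*\mu_E$ is not formal for a commuting square and should be checked on fibers (it holds because $\pi$ identifies $[2]^{-1}(z)$ with $\psi_2^{-1}(\pi(z))$ away from $2$-torsion, and $\mu_E$ has no atoms), and the vanishing of $\iint\lambda_v(P-Q)\,d\mu_E\,d\mu_E$ in the normalization of Proposition~\ref{prop:minGreenEC} needs the observation that $\mu_E$ is carried by the skeleton, where the intersection term $i_v$ vanishes and the periodic second Bernoulli polynomial has mean zero. Your final optimization step is also slightly more explicit than the paper's: you show $(P,Q)\mapsto(P-Q,P+Q)$ is onto $E\times E$, whereas the paper just takes $Q=O$ and a suitable $P$ (which already suffices, since $\lambda_v(P-Q)+\lambda_v(P+Q)\ge 2\min\lambda_v$ trivially). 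What your route buys is a self-contained argument independent of the particular model used in \cite{Baker}; what the paper's citation buys is brevity.
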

\begin{proof}
As the minimum value of $g_{\psi_2}(x,y)$ is unchanged by conjugation, we can work instead with a minimal Weierstrass form $E':y^2 = x^3 + Ax + B$; write $\Omega:E' \to E$ for the (inverse of the) isogeny achieving the minimal Weierstrass form. Let $\varphi_2$ be the endomorphism on $\PP^1(K_v)$ induced by the multiplication-by-2 map on $E'$ via the projection onto the $x$ coordinate $x:E' \to \PP^1(K_v)$. 

In Appendix B of \cite{Baker}, Baker shows the following relationship between the local N\'eron-Tate of points on $E'$ and the Arakelov-Green's function attached to $\varphi_2$:
\[
g_{\varphi_2}(x(P), x(Q)) = \lambda_v (P-Q) + \lambda_v (P+Q)\ .
\]As in the proof of Proposition~\ref{prop:minGreenEC}, the right side of this expression is minimized if we choose $Q=O\in E'$ to be the identity and $P\in E'$ so that $|u(\Omega(P))| = |q|^{1/2}$ (this is the same $u$ as in the proof of Proposition~\ref{prop:minGreenEC}; as its domain is $E$, we need to first map $P$ from $E'$ to $E$).

We saw in the proof of Proposition~\ref{prop:minGreenEC}) that the value of $\lambda_v$ in this case is $$\lambda_v(P) = \frac{1}{24}\log_v |q|\ ;$$ since the projection $E'(K_v) \to \PP^1(K_v)$ is surjective, we find
\[
\min_{x,y\in \PP^1(K_v)}\ g_{\varphi_2}(x,y) = \frac{1}{12}\log_v |q|
\] which establishes the desired result.

\end{proof}

\subsection{Discussion}
The results of this section tell us that the Latt\`es maps $\psi_m$ provide examples both of where the conditions of our main Theorem are met and where they are not met. When $m$ is odd, the conjugate attaining semistable reduction is independent of the iterate $\psi_{m^n}$ (Proposition~\ref{prop:MRL}). As our main theorem predicts, the formula in (\ref{eq:minresLattes}) for the minimal resultant values transform according to the rule given in Theorem~\ref{thm:mainthm}. 

However, when $m$ is even, something different happens: the points in the minimal resultant locus of $\psi_{m^n}$ are different for each $n$, hence there is no one conjugate which has semistable reduction for all $n$ (Proposition~\ref{prop:MRL}). Consequently, the minimal resultant does not transform nicely in this case.

\subsection{Another Example}
We close with another example where the equivalent conditions of our theorem hold. Let $p\geq 3$ be a prime number and let $K=\CC_v$ be the $p$-adic complex numbers. Define $$\varphi(z) = \frac{z^p-z}{p}\ .$$ In \cite{RumelyCap}, it was shown that $\mu_\varphi$ is Haar measure on $\ZZ_v$; consequently the barycenter of $\mu_\varphi$ is precisely $\zetaG$. Moreover, a direct computation shows that the reduction $\varphi_v$ is not in $I(d)$. Therefore the equivalent conditions of Corollary~\ref{cor:Berkcorollary} hold; in particular, $\MinResLoc(\varphi)\subseteq \MinResLoc(\varphi^n)$ for all $n$. This can also be shown directly using the calculations in Example 1 of \cite{KJThesis}.


\begin{thebibliography}
\normalsize
\baselineskip=17pt

\bibitem{Baker}
Baker, M.
\newblock{\it A Finiteness Theorem for Canonical Heights Attached to Rational Maps over Function Fields},
\newblock{J. Reine Angew. Math 626 (2009), 205--233.}

\bibitem{BakerPetsche}
{Baker, M. Petsche, C.}
\newblock {\it Global Discrepancy and Small Points on Elliptic Curves},
\newblock {International Math Research Notices 61 (2005), 3791-3834}.

\bibitem{BRSmallHt}
{Baker, M. Rumely, R.}
\newblock{\it Equidistribution of Small Points, Rational Dynamics, and Potential Theory},
\newblock{Annales d l'Institut Fourier 56 (2006), 625-688}.


\bibitem{BakerRumely}
{Baker, M. Rumely, R.}
\newblock {\it Potential Theory and Dynamics on the Berkovich Projective Line},
\newblock {AMS Mathematical Surveys and Monographs}, vol. 159 (2010). 


\bibitem{Ber}
{Berkovich, V.~G.}
\newblock {\it Spectral Theory and Analytic Geometry over non-Archimedean
  Fields},
\newblock volume 33 of {\it Mathematical Surveys and Monographs}.
\newblock Amer. Math. Soc., 1990.

\bibitem{CL}
{Chambert-Loir, A.}
\newblock{\it Mesures et \'Equidistribution sur les Espaces de Berkovich},
\newblock{J. Reine Angew. Math. 595 (2006), 215--235}.

\bibitem{DeMarco1}
{DeMarco, L.}
\newblock {\it Iteration at the Boundary of the Space of Rational Maps}, 
\newblock {Duke Mathematical Journal} Vol. 130, No. 1 (2005).

\bibitem{DeMarco2}
{DeMarco, L.}
\newblock {\it The Moduli Space of Quadratic Rational Maps}, 
\newblock {Journal of the American Mathematical Society} Vol. 20, No. 2, April 2007, Pages 321-355.

\bibitem{DF}
{DeMarco, L. Faber, X.}
\newblock{\it Degenerations of Complex Dynamical Systems},
\newblock {Forum of Mathematics, Sigma. 2 (2014), e6.}

\bibitem{dolgachev}
{Dolgachev, I.} 
\newblock {\it Lectures on Invariant Theory}, 296 of {London Mathematical Society lecture note series}.
\newblock Cambridge University Press, Cambridge, 2003. 

\bibitem{Faber}
{Faber, X.}
\newblock{\it Topology and Geometry of the Berkovich Ramification Locus I},
\newblock{To appear in {\it Manuscripta Mathematica}}.

\bibitem{FRLErgodic}
{Favre, C. and Rivera-Letelier, J.}
\newblock {\it Th\'eorie Ergodique des Fractions Rationelles sur un Corps
  Ultram\'etrique},
\newblock { Proc. Lond. Math. Soc.}, 1:116--154, 2010.

\bibitem{hartshorne}
{Hartshorne, R.}
\newblock {\it Algebraic Geometry}, vol. ~52 of {Graduate Texts in Mathematics}.
\newblock Springer, New York, NY 10010.

\bibitem{KJThesis}
{Jacobs, K.}
\newblock{\it Asymptotic Behaviour of Arithmetic Equivariants in non-Archimedean Dynamics},
\newblock{PhD. Thesis, University of Georgia, May 2016}.

\bibitem{LevyStable} A. Levy.
\newblock{\it The Semistable Reduction Problem for the Space of Morphisms on $\mathbb{P}^n$},
\newblock  Algebra and Number Theory, Volume 6, No 7 (2012). 

\bibitem{mumfordGIT}
{Mumford, D.}
\newblock {\it Geometric Invariant Theory}, vol. -34 of {Ergebnisse Der Mathematik Un Ihrer Grenzhebiete}. 1st edition.
\newblock Springer-Verlang, Berlin.

\bibitem{RL}
{Rivera-Letelier, J.}
\newblock{\it Dynamique des Fonctions Rationnelles sur le Corps Locaux},
\newblock{ Ast\'erique 287, pp. 147-230, (2003)}.

\bibitem{Ru1}
{Rumely, R.}
\newblock{\it The Minimal Resultant Locus},
\newblock{April 2013. Preprint available at http://arxiv.org/abs/1304.1201}.

\bibitem{Rumely}
{Rumely, R.}
\newblock {\it The Geometry of the Minimal Resultant Locus},
\newblock {February 2014. Preprint available at http://arxiv.org/abs/1402.6017}.

\bibitem{RumelyCap}
{Rumely, R.}
\newblock{\it Capacity Theory on Algebraic Curves},
\newblock{ volume 1378 of {\it Lecture Notes in Mathematics}. Springer-Verlag, Berline, 1989.}

\bibitem{silverman:ads}
{Silverman, J.~H.}
\newblock {\it The Arithmetic of Dynamical Systems}, vol.~241 of {Graduate Texts in Mathematics}.
\newblock Springer, New York, NY 10010. 2007.

\bibitem{silverman:space}
{Silverman, J.~H.}
\newblock {\it The Space of Rational Maps on $\PP^1$},
\newblock {Duke Mat J.} Vol. 94 Iss. 1 (1998), 41--77.

\bibitem{AdvancedTopics}
{Silverman, J.~H.}
\newblock{Advanced Topics in the Theory of Elliptic Curves} , vol.~151 of {Graduate Texts in Mathematics}.
\newblock Springer-Verlag, New York.

\bibitem{StoutTownsley}
{Stout, B. Townsley, A.}
\newblock {\it Endomorphisms of Bounded Height and Resultant.}
\newblock {Journal of Number Theory} Volume 145, December 2014, Pages 426-432.

\bibitem{STW}
{Szpiro, L. Tepper, M., Williams, P.}
\newblock {\it Semi-stable Reduction Implies Minimality of the Resultant},
\newblock {Journal of Algebra} Volume 397, January 2014, Pages 489-498.

\bibitem{Tate}
John Tate.
\newblock {\it A Review of non-Archimedean Elliptic Functions}.
\newblock {in Elliptic Curves, Modular Forms and Fermat's Last Theorem, 2nd ed., International} Press pages 310-314, 1997.



\end{thebibliography}
\end{document}